\numberwithin{equation}{section}
\newtheorem{theorem}{Theorem}[section]
\newtheorem{lemma}{Lemma}[section]
\newtheorem{corollary}{Corollary}[section]
\newtheorem{proposition}{Proposition}[section]
\theoremstyle{definition}
\newtheorem{definition}{Definition}[section]
\newtheorem{remark}{Remark}[section]
\newtheorem{example}{Example}[section]
\theoremstyle{remark}
\date{}
\begin{document}

\title{A Schwarz lemma for weakly K\"ahler-Finsler manifolds}
\author{Jun Nie (jniemath@126.com)\\
School of Mathematical Sciences, Xiamen
University\\ Xiamen 361005, China\\
Chunping Zhong (zcp@xmu.edu.cn)\\
School of Mathematical Sciences, Xiamen
University\\ Xiamen 361005, China
}

\date{}
\maketitle

\begin{abstract}
In this paper, we first establish  several theorems about the estimation of distance function on real and strongly convex complex Finsler manifolds and then obtain a Schwarz lemma from a strongly convex weakly K\"ahler-Finsler  manifold into a strongly pseudoconvex complex Finsler manifold. As applications, we prove that a holomorphic mapping from a strongly convex weakly K\"ahler-Finsler manifold into a strongly pseudoconvex complex Finsler manifold is necessary constant under an extra condition. In particular, we prove that a holomorphic mapping from a complex Minkowski space into a strongly  pseudoconvex complex Finsler manifold such that its holomorphic sectional curvature is bounded from above by a negative constant is necessary constant.
\end{abstract}

\textbf{Keywords:}  Schwarz lemma; weakly K\"ahler-Finsler manifold; flag curvature; holomorphic sectional curvature.

\textbf{Mathematics Subject Classification:}  32H02, 53C56, 53C60.

\section{Introduction and main results}
In complex analysis, the classical Schwarz-Pick lemma \cite{pick} states that any holomorphic mapping from the unit disk into itself decreases the Poincar\'e metric. There are various kinds of generalizations of the Schwarz-Pick lemma  among which the most influential one was given by Ahlfors \cite{ahlfors}, where he generalized the Schwarz lemma to holomorphic mappings from the unit disk $D$ into a Riemann surface $S$ endowed with a Riemannian metric $ds^2$ with Gauss curvature $K\leq -1$, and proved that the hyperbolic length of any curve in $D$ is at least equal to the length of its image.
Indeed, Ahlfors revealed a fundamental fact that the Schwarz lemma is closely related to the metric geometry of the domain and target manifolds, thus opened the door of generalizing Schwarz lemma from the view point of differential geometry.
The key idea used by Ahlfors in his proof is to compare the pull-back metric under the conformal map with the original hyperbolic metric on the unit disk, and then use the fact that the Laplacian of a real function must be nonnegative at the point where it attains a local minimum.

 Ahlfors' Schwarz lemma was later generalized by many mathematicians. In 1957-1958, Look \cite{La,Lb} gave a systematic study of Schwarz lemma and analytic invariants on the classic domains, from the viewpoints of both function theoretic and differential geometric.
 The Schwarz lemma has become a powerful tool in geometry and analysis ever since Yau's seminal paper \cite{yau2} which pushed this classic result in complex analysis to manifolds.
  The general theme of the lemma goes something like this: given a holomorphic map $f$ from a complete complex manifold $M$ into a target complex manifold $N$, assume that $M$ has lower curvature bound by a constant $K_1$ and $N$ has upper curvature bound by a negative constant $K_2<0$. Then the pull-back via $f$ of the metric of $N$  is dominated by a multiple (which is typically in the form $\frac{K_1}{K_2}$) of the metric of $M$, with the multiple given by the curvature bounds. This type of results immediately imply Liouville type rigidity results when the multiple becomes zero.

  In Yau's original result \cite{yau2},$M$ is a complete K\"ahler manifold and $N$ is another Hermitian manifold, where $M$ has Ricci curvature bounded from below by a constant $K_1$ and $N$ has holomorphic bisectional curvature bounded from above by a negative constant $K_2 <0$. Shortly after, Rodyden \cite{royden} realized that the holomorphic bisectional curvature assumption on the domain could be reduced to holomorphic sectional utilizing the symmetry of curvature tensor of K\"ahler metrics. Since then, various generalizations were made to Hermitian and almost Hermitian cases, we refer to Chen-Cheng-Lu \cite{chen}, Greene-Wu \cite{greene}, Lu-Sun \cite{LS}, Liu \cite{Liu}, Zuo \cite{zuo}, Tosatti \cite{tosatti}, Wu-Yau \cite{wy}, Yang-Zheng \cite{yang}, Ni \cite{Nia,Nib} and many others. We also refer to Kim and Lee \cite{KL} and references therein. Note that recently the study of the Schwarz lemma at the boundary of various type of domains in $\mathbb{C}^n$ also attracts lots of interests, we refer to Burns-Krantz \cite{BK}, Liu-Tang \cite{Liua,Liub,Liuc, Liud}, Tang-Liu-Lu \cite{Tang}, Tang-Liu-Zhang\cite{Tangb},  Wang-Liu-Tang \cite{Wang}, etc.

In \cite{chen}, Chen, Cheng and Lu established the following  Schwarz lemma.
\begin{theorem}(\cite{chen})\label{chen}
Suppose that $M$ is a complete K\"ahler manifold such that its holomorphic sectional curvature is bounded from below by $K_1$ and that its sectional curvature is also bounded from below. Suppose that $N$ is a Hermitian manifold whose holomorphic sectional curvature is bounded from above by a negative constant $K_2$. Let $f:M\rightarrow N$ be any holomorphic mapping. Then
\begin{equation}
f^\ast ds_N^2\leq \frac{K_1}{K_2}ds_M^2.
\end{equation}
\end{theorem}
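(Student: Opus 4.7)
The plan is to follow Royden's refinement of Yau's maximum principle argument. Let $g$ and $h$ denote the K\"ahler metric on $M$ and the Hermitian metric on $N$, and introduce
\begin{equation*}
u(p) \;=\; \sup\bigl\{(f^{*}h)_{p}(\xi,\bar{\xi}) : \xi \in T_{p}^{1,0}M,\; g_{p}(\xi,\bar{\xi}) = 1\bigr\},
\end{equation*}
so that the conclusion $f^{*}ds_{N}^{2} \leq (K_{1}/K_{2})\,ds_{M}^{2}$ is equivalent to $u \leq K_{1}/K_{2}$ pointwise on $M$. One may assume $s := \sup_{M} u > 0$, for otherwise there is nothing to prove, and the goal is to derive the estimate $s \leq K_{1}/K_{2}$.

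Because $(M,g)$ is complete with sectional curvature bounded from below, it satisfies the Omori--Yau maximum principle. The function $u$ is only continuous in general; I would handle this by choosing a sequence with $u(p_{\varepsilon}) \to s$, diagonalizing $f^{*}h$ with respect to $g$ at $p_{\varepsilon}$ with largest eigenvalue $\lambda_{1}^{2} = u(p_{\varepsilon})$, extending the top eigenvector $e_{1}$ to a locally defined $g$-unitary frame $\{e_{\alpha}\}$ near $p_{\varepsilon}$, and then working with the smooth function $\tilde{u} := (f^{*}h)(e_{1},\bar{e}_{1})$. By construction $\tilde{u} \leq u$ locally and $\tilde{u}(p_{\varepsilon}) = u(p_{\varepsilon})$, so the Omori--Yau principle applied to $\log \tilde{u}$ supplies a point (still called $p_{\varepsilon}$) where $\Delta_{g}\log \tilde{u}(p_{\varepsilon}) \leq \varepsilon$ and $\tilde{u}(p_{\varepsilon}) \geq s - \varepsilon$.

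With this setup, the core of the argument is a Bochner-type computation of $\Delta_{g} \log \tilde{u}$ at $p_{\varepsilon}$, carried out in the $g$-unitary frame $\{e_{\alpha}\}$ on $M$ together with an $h$-unitary frame $\{\epsilon_{i}\}$ on $N$ chosen so that $df(e_{\alpha}) = \lambda_{\alpha}\epsilon_{\alpha}$. The K\"ahler condition on $g$ permits free commutation of covariant derivatives, while the holomorphicity of $f$ annihilates the $(0,1)$-part of $df$. The resulting expression splits into a nonnegative gradient term, a contribution from the holomorphic sectional curvature of $(M,g)$ along $e_{1}$ (hence $\geq K_{1}$), and a weighted sum of holomorphic bisectional curvatures of $(N,h)$ involving $df(e_{1})$ and $df(e_{\alpha})$. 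At this point I would invoke Royden's algebraic lemma: exploiting the K\"ahler symmetry of the curvature tensor of $g$ and the diagonal form of $df$, the bisectional sum is dominated by a multiple of the holomorphic sectional curvature of $N$ along $df(e_{1})$, and is therefore bounded above by $K_{2}\,\tilde{u}(p_{\varepsilon})$. Discarding the nonnegative gradient term yields the differential inequality
\begin{equation*}
\varepsilon \;\geq\; \Delta_{g}\log \tilde{u}(p_{\varepsilon}) \;\geq\; K_{1} - K_{2}\,\tilde{u}(p_{\varepsilon}).
\end{equation*}

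Letting $\varepsilon \to 0$ and using $\tilde{u}(p_{\varepsilon}) \to s$ together with $K_{2} < 0$ gives $s \leq K_{1}/K_{2}$, as required. I expect the main obstacle to be the Bochner--Royden step: carefully tracking coefficients so that the bisectional sum on $N$ is absorbed precisely into $K_{2}\,\tilde{u}$ (which uses both the K\"ahler symmetry of $M$ and the simultaneous diagonalization of $df$), and justifying the use of the maximum principle on the a priori only continuous function $u$ by passage to its smooth local majorant $\tilde{u}$. Completeness of $g$ and the lower sectional curvature bound enter only to secure the Omori--Yau principle and are otherwise auxiliary.
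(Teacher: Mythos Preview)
The paper does not supply its own proof of this theorem; it is quoted from Chen--Cheng--Lu as background, and the paper's contribution is the Finsler generalization (Theorems~\ref{T- 6.1a} and \ref{T-6.1b}), whose proof follows precisely the Chen--Cheng--Lu strategy. For each point and tangent direction one inserts a holomorphic disk $\varphi:\Delta\to M$, forms the auxiliary function
\[
\Phi(\zeta)=\bigl[a^{2}-\rho^{2}(\varphi(\zeta))\bigr]^{2}\bigl[b^{2}-\varrho^{2}(\zeta)\bigr]^{2}\,\frac{((f\circ\varphi)^{*}h)(\partial_{\zeta},\bar\partial_{\zeta})}{(\varphi^{*}g)(\partial_{\zeta},\bar\partial_{\zeta})}
\]
on a geodesic ball, applies the ordinary maximum principle there, and lets $a,b\to\infty$. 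The Hessian bounds on $\rho^{2}$ (controlled by the sectional--curvature lower bound on $M$) absorb the localization terms, and the curvature hypotheses enter only through the Gaussian curvatures of the \emph{one-dimensional} pullback metrics $\varphi^{*}g$ and $(f\circ\varphi)^{*}h$, which by Lemma~\ref{L-2.1} are governed by the holomorphic sectional curvatures of $M$ and $N$.

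Your Omori--Yau/Bochner route is a genuinely different strategy, and it has a real gap in the present setting because $N$ is only Hermitian. The Bochner computation of $\Delta_{g}\log\tilde u$ at $p_{\varepsilon}$ produces on the target side the sum $\sum_{\alpha}\lambda_{\alpha}^{2}\,R^{N}(\epsilon_{\alpha},\bar\epsilon_{\alpha},\epsilon_{1},\bar\epsilon_{1})$, i.e.\ holomorphic \emph{bisectional} curvatures of $(N,h)$. Royden's algebraic lemma, which you invoke to dominate this by $K_{2}\tilde u$, rests on the K\"ahler symmetry $R_{i\bar jk\bar l}=R_{k\bar ji\bar l}$ of the curvature tensor of the manifold whose curvature is being summed---here that is $N$, not $M$. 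The K\"ahler symmetry of $g$ that you mention is irrelevant to this step; it helps only with the $M$--curvature terms. For a general Hermitian $h$ the required symmetry fails and there is no passage from the hypothesis $K_{H}\le K_{2}$ on holomorphic sectional curvature to a usable bound on the bisectional sum. This is exactly why Chen--Cheng--Lu (and the present paper) argue through holomorphic disks: on a Riemann surface there is no distinction between sectional and bisectional, so only the holomorphic sectional curvature of $N$ is ever invoked.
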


Note that both the metrics $ds_M^2$ and $ds_N^2$ in Theorem \ref{chen} are Hermitian quadratic metrics.
According to S. S. Chern \cite{Ch}, Finsler geometry is just Riemannian geometry without quadratic restrictions \cite{Ch}. Thus complex Finsler geometry is just Hermitian geometry without Hermitian quadratic restrictions which contains Hermitian geometry as special case.
It is known that on any complex manifold, there are natural intrinsic pseudo-metric, i.e., the Kobayashi pseudo-metric \cite{Kobayashi3} and the Carath$\acute{\mbox{e}}$odory pseudo-metric \cite{C}.  In general, however, they are only complex Finsler metrics in nature and in some special cases these metrics are even strongly pseudoconvex complex Finsler metrics in the strict sense of Abate and Patrizio \cite{abate}.

There are three notions of K\"ahlerian in complex Finsler setting \cite{abate}, that is, strongly K\"ahler-Finsler metric, K\"ahler-Finsler metric and weakly K\"ahler-Finsler metric. It is proved by Chen and Shen \cite{chenb} that a K\"ahler-Finsler metric is actually a strongly K\"ahler-Finsler metric, thus leaving two notions of K\"aherian in complex Finsler setting. There are, however, lots of nontrivial K\"ahler-Finsler metrics (that is, they are neither complex Minkowski metrics nor K\"ahler metrics). Indeed,
 let $\pmb{\alpha}^2(\xi)=a_{i\overline{j}}(z)\xi^i\overline{\xi^j}$ and $\pmb{\beta}^2(\eta)=b_{i\overline{j}}(w)\eta^i\overline{\eta^j}$ be two Hermitian metrics on complex manifolds $M_1$ and $M_2$, respectively. In \cite{XZ}, Xia and Zhong proved that the following Szab$\acute{\mbox{o}}$ metric
\begin{equation}
F_\varepsilon=\sqrt{\pmb{\alpha}^2(\xi)+\pmb{\beta}^2(\eta)+\varepsilon(\pmb{\alpha}^{2k}(\xi)+\pmb{\beta}^{2k}(\eta))^{\frac{1}{k}}},\quad \varepsilon\in(0,+\infty)\label{kfm}
\end{equation}
is actually a strongly convex complex Berwald metric on the product manifold $M=M_1\times M_2$ for any $k\geq 2$. Moreover, they proved that $F_\varepsilon$ is a strongly convex K\"ahler-Finsler metric on  $M=M_1\times M_2$ if and only if both $(M_1,\pmb{\alpha})$ and $(M_2,\pmb{\beta})$ are K\"ahler manifolds. Note that the metrics $F_\varepsilon$ defined by \eqref{kfm} are non-Hermitian quadratic for any $\varepsilon\in(0,+\infty)$ and integer $k\geq 2$.

For weakly K\"ahler-Finsler metrics, we need to mention the fundamental theorem of Lempert \cite{lempert}, which  states that on any bounded strongly convex domain $D\subset\subset\mathbb{C}^n$ with smooth boundary, the Kobayashi pseudo-metric and the  Carath$\acute{\mbox{e}}$odory pseudo-metric coincide, and they are strongly pseudoconvex complex Finsler metrics in the sense of Abate and Patrizio \cite{abate}, namely, they are smooth outside of the zero section of the holomorphic tangent bundle $T^{1,0}D$. Moreover, they are weakly K\"ahler-Finsler metric with constant holomorphic sectional curvature $-4$. In general, however, these metrics do not have explicit formulas on strongly convex domains with smooth boundaries in $\mathbb{C}^n$. It is still open whether the Kobayashi metrics on such domains are K\"ahler-Finsler metrics, or more specifically, K\"ahler-Berwald metrics?

To construct strongly pseudoconvex complex Finsler metrics with specific properties, Zhong \cite{Zh1} initiated the study of $U(n)$-invariant complex Finsler metrics $F(z,v)=\sqrt{r\phi(t,s)}$ on $U(n)$-invariant domains $D\subseteq\mathbb{C}^n$, here
$$r=\|v\|^2,\quad t=\|z\|^2,\quad s=\frac{|\langle z,v\rangle|^2}{r},\quad \forall (z,v)\in T^{1,0}D,$$
and proved that a $U(n)$-invariant complex Finsler metric $F=\sqrt{r\phi(t,s)}$ is a K\"ahler-Finsler metric if and only if $\phi(t,s)=f(t)+f'(t)s$ for some positive function $f(t)$ satisfying $f(t)+tf'(t)>0$. It was also proved in \cite{Zh1} that a strongly pseudoconvex $U(n)$-invariant complex Finsler metric $F=\sqrt{r\phi(t,s)}$
is a weakly K\"ahler-Finsler metric if and only if $\phi(t,s)$ satisfies
\begin{equation}
(\phi-s\phi_s)[\phi+(t-s)\phi_s][\phi_s-\phi_t+s(\phi_{st}+\phi_{ss})]+s(t-s)\phi_{ss}[\phi(\phi_s-\phi_t)+s\phi_s(\phi_t+\phi_s)]=0.\label{wkc}
\end{equation}
Very recently, Cui, Guo and Zhou \cite{Zhou} obtained a special solution of \eqref{wkc} which are $U(n)$-invariant complex Rander metric.
They also gave a classification of weakly K\"ahler-Finsler metrics which are $U(n)$-invariant complex Randers metric and have constant holomorphic sectional curvatures.

All of these progress in complex Finsler geometry provide us with nontrivial examples of strongly pseudoconvex (even strongly convex) K\"ahler-Finsler metrics and weakly K\"ahler-Finsler metrics (they are nontrivial in the sense that they are non-Hermitian quadratic metrics), some of which even enjoy a very nice curvature property. Recently, there are several important progress investigating related problem in complex Finsler geometry with the assumption that the complex Finsler metric is a strongly pseudoconvex (or strongly convex) K\"ahler-Finsler metric or weakly K\"ahler-Finsler metric, we refer to \cite{cheny},\cite{li},\cite{YZ} for more details.

Therefore,  a very natural and interesting question in complex Finsler geometry one may ask is whether it is possible to generalize Theorem \ref{chen} to more general complex metric spaces, i.e., to establish  Schwarz lemmas between two strongly pseudoconvex complex Finsler manifolds. Especially when the domain manifold  is endowed with a strongly pseudoconvex K\"ahler-Finsler metric or  weakly K\"ahler-Finsler metric and the target manifold is endowed with a general strongly pseudoconvex complex Finsler metric.

In \cite{shen}, Shen and Shen obtained a Schwarz lemma from a compact complex Finsler manifold with holomorphic sectional curvature bounded from below by a negative constant into another complex Finsler manifold with holomorphic sectional curvature bounded above by a negative constant.  In the case that the domain manifold is non-compact, Wan \cite{wan}
obtained a Schwarz lemma from a complete Riemann surface with curvature bounded from below by a constant into a complex Finsler manifold with  holomorphic sectional curvature bounded from above by a negative constant. The general case, i.e.,  when the domain manifold is a complete non-compact complex  manifold endowed with a strongly pseudoconvex complex Finsler manifold, however, is still open.
It seems that the method used in \cite{wan} does not work when the domain manifold has complex dimension $\geq 2$.

As a first step towards the above question, Nie and Zhong \cite{nie} considered the case that the domain manifold is a K\"ahler manifold and the target manifold is a strongly pseudoconvex complex Finsler manifold and obtained the following Schwarz lemma (cf. Theorem 1.1 in \cite{nie}).
 \begin{theorem}\label{t-1.2}(\cite{nie})
 Suppose that $(M,ds_M^2)$ is a complete K\"ahler manifold such that its holomorphic sectional curvature is bounded from below by a constant $K_1$ and its radial sectional curvature is also bounded from below. Suppose that $(N,H)$ is a strongly pseudoconvex complex Finsler manifold whose holomorphic sectional curvature
is bounded above by a negative constant $K_2$. Let $f:M\rightarrow N$ be a holomorphic mapping. Then
\begin{equation}
(f^\ast H)(z;dz)\leq \frac{K_1}{K_2}ds_M^2.
\end{equation}
\end{theorem}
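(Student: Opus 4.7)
The plan is to adapt the classical Ahlfors--Yau--Royden--Chen--Cheng--Lu maximum-principle argument to the complex Finsler target, combined with Yau's exhaustion-and-cutoff trick to handle the non-compactness of $M$. Since $H^{2}$ is not fiber-quadratic, the pull-back $f^{*}H^{2}$ is not a Hermitian form on $T^{1,0}M$, so rather than passing to eigenvalues I would work with the scalar
\begin{equation*}
u(z) \;:=\; \sup_{ds_M^{2}(v,v)=1}\, H^{2}\bigl(f(z),\, f_{*}v\bigr), \qquad v\in T^{1,0}_{z} M,
\end{equation*}
and prove $u\le K_{1}/K_{2}$ pointwise on $M$. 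Fix a base point $p\in M$, let $r(z)=d_{M}(p,z)$, and on the geodesic ball $B_{R}=B_{M}(p,R)$ introduce the cutoff
\begin{equation*}
\Phi(z)=\left(1-\frac{r(z)^{2}}{R^{2}}\right)^{2} u(z),
\end{equation*}
which vanishes on $\partial B_{R}$ and therefore attains its maximum at some interior $z_{0}\in B_{R}$. Assume $\Phi(z_{0})>0$ (otherwise we are done on $B_{R}$) and pick a unit direction $v_{0}\in T^{1,0}_{z_{0}}M$ realizing the supremum defining $u(z_{0})$.

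Since $u$ is only continuous (being a fiberwise sup), I would smooth it as follows. Choose holomorphic normal coordinates at $z_{0}$ for $ds_{M}^{2}$, extend $v_{0}$ to a local holomorphic vector field $V$ with constant components in these coordinates, and set
\begin{equation*}
\widetilde{\Phi}(z) \;=\; \left(1-\frac{r(z)^{2}}{R^{2}}\right)^{2}\frac{H^{2}\bigl(f(z),\,f_{*}V(z)\bigr)}{ds_{M}^{2}(V,V)(z)}.
\end{equation*}
The K\"ahler condition ensures that these coordinates are geodesic to second order, hence $ds_{M}^{2}(V,V)(z_{0})=1$ with vanishing first derivatives there; consequently $\widetilde{\Phi}(z_{0})=\Phi(z_{0})$ and $\widetilde{\Phi}\le\Phi$ near $z_{0}$, so $z_{0}$ is still a local max of $\widetilde{\Phi}$. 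If $z_{0}$ lies in the cut locus of $p$, the non-smoothness of $r$ is dealt with by the standard Calabi trick (perturb the base point along a minimizing geodesic and let the perturbation vanish).

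The inequality $\Delta\log\widetilde{\Phi}(z_{0})\le 0$ (in the support sense) then splits into three contributions. The target term $\Delta\log H^{2}(f(\cdot),f_{*}V)|_{z_{0}}$ is estimated from below via the Chern--Finsler curvature computation of Shen--Shen \cite{shen} and Wan \cite{wan}, using the upper bound $K_{2}$ on the holomorphic sectional curvature of $(N,H)$, giving a lower bound proportional to $-K_{2}u(z_{0})$. The domain term $-\Delta\log ds_{M}^{2}(V,V)|_{z_{0}}$ is bounded in terms of the holomorphic sectional curvature of $M$ at $v_{0}$, which is $\ge K_{1}$; here the K\"ahler hypothesis and the fact that $v_{0}$ is the maximizing direction are crucial for the Royden-type reduction from a Ricci-type bound to the pure holomorphic sectional bound. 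The cutoff term $2\Delta\log(1-r^{2}/R^{2})|_{z_{0}}$ is handled by the Laplacian comparison theorem applied to $r^{2}$, which is exactly where the radial sectional curvature lower bound plays its only role, contributing $O(1/R^{2})$. Combining yields $u(z_{0})\le K_{1}/K_{2}+O(1/R^{2})$; for any fixed $z\in M$, once $R$ is so large that $z\in B_{R}$, the inequality $\Phi(z)\le\Phi(z_{0})$ together with dividing by $(1-r(z)^{2}/R^{2})^{2}$ and letting $R\to\infty$ gives the desired $u(z)\le K_{1}/K_{2}$.

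The principal obstacle is the target curvature calculation: computing $\Delta\log H^{2}(f(\cdot),f_{*}V)$ at $z_{0}$ requires tracking both horizontal and vertical derivatives along the Chern--Finsler connection of $(N,H)$, and the cross-terms involving $\bar{\partial}V$ must be shown either to vanish at $z_{0}$ (thanks to the K\"ahler structure of $M$ and the constant-component extension of $V$) or to reassemble into the holomorphic sectional curvature of $H$ evaluated on $f_{*}V(z_{0})$. This step, together with the Royden-type reduction of the domain estimate to the holomorphic (rather than Ricci) sectional curvature, is the technical heart of the proof. Secondary technical points --- Calabi's smoothing at the cut locus and the degenerate case $f_{*}V(z_{0})=0$, in which $u(z_{0})=0$ trivially --- are routine.
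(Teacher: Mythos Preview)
The paper does not prove this statement directly (it is quoted from \cite{nie}); but Theorem~6.1 proves the strictly more general weakly K\"ahler--Finsler version, and its argument specializes to a proof here. That argument is quite different from yours. Rather than applying the maximum principle on $M$, one composes with an arbitrary holomorphic disc $\varphi:\Delta\to M$, sets $\lambda^2=G(\varphi;\varphi')$, $\sigma^2=H(f\circ\varphi;(f\circ\varphi)')$, and on the disc maximizes
\[
\Phi(\zeta)=[a^2-\rho^2(\varphi(\zeta))]^2\,[b^2-\varrho^2(\zeta)]^2\,\frac{\sigma^2(\zeta)}{\lambda^2(\zeta)},
\]
with \emph{two} cutoff factors ($M$-distance $\rho$ and Poincar\'e distance $\varrho$). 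At the interior maximum $\zeta_0$ one has $\partial_\zeta\bar\partial_\zeta\log\Phi\le 0$; the curvature contributions are then Gaussian curvatures of one-variable pullback metrics, linked to $K_G$ and $K_H$ via Lemma~\ref{L-2.1}, and the cutoff contributions are controlled by the complex Hessian estimate on $\rho^2$ (the K\"ahler case of Theorem~\ref{T-4.1}) together with the elementary Poincar\'e estimates of Remarks~\ref{R-4.3}--\ref{R-4.4}. Letting $a,b\to\infty$ gives $\sigma^2/\lambda^2\le K_1/K_2$ for every disc, which is the assertion.

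Your route has a genuine gap in the domain term. In K\"ahler normal coordinates at $z_0$ with $V$ constant, $-\Delta\log ds_M^2(V,V)|_{z_0}$ equals $\sum_\alpha R^M_{v_0\bar v_0\alpha\bar\alpha}$, a \emph{Ricci} quantity, and there is no ``Royden-type reduction'' that converts the hypothesis $K_G\ge K_1$ into a lower bound on this sum: Royden's symmetrization exploits the extremality of $v_0$ with respect to $f^*H$ to control \emph{target} bisectional terms, but the domain terms $R^M_{v_0\bar v_0\alpha\bar\alpha}$ for $\alpha\neq v_0$ involve directions unrelated to $f$ and are simply not governed by the holomorphic sectional curvature of $M$. (This is exactly why Chen--Cheng--Lu, \cite{nie}, and the present paper pass to one complex dimension.) The same difficulty recurs in your target term: for $\alpha\neq v_0$ the function $z\mapsto H^2(f(z),f_*V(z))$ along the $z^\alpha$-line is \emph{not} the pullback of $H$ by any curve---its fiber argument $f_*V$ is not the tangent of the image---so the one-dimensional computations of \cite{shen,wan} and Lemma~\ref{L-2.1} give no control of $\partial_\alpha\bar\partial_\alpha\log H^2(f,f_*V)$. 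Both problems disappear if you replace $\Delta$ by the single direction $\partial_{v_0}\bar\partial_{v_0}$: then along the $v_0$-disc one has $f_*V=(f\circ\psi)'$, the target term is a genuine pullback Gaussian curvature, and the domain term is exactly $K_G(v_0)$. But that one-direction version is, in substance, the paper's disc reduction.
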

In this paper, we go a further step towards this question, and we generalize Theorem \ref{t-1.2} to the case that the domain manifold $M$  is a strongly convex weakly K\"ahler-Finsler manifold with a pole $p$.
We first establish the following theorem which relates the real Hessian of the distance function $\rho(x)$ of a real Finsler metric $G$ on a smooth manifold $M$ with the radial flag curvature of $G$.
\begin{theorem}\label{T-1.1}(cf. Theorem 3.1)
Suppose that $(M,G)$ is a real Finsler manifold with a pole $p$ such that its radial flag curvature is bounded from below by a negative constant $-K^2$. Suppose that $\gamma:[0,r] \rightarrow M$ is a normal geodesic with $\gamma(0)=p$ such that $\gamma(r)=x \neq p$. Denote the distance function from $p$ to $x$ by $\rho(x)$. Then the Hessian of $\rho$ satisfies
\begin{equation*}
H(\rho)(u,u)(x) \leq \frac{1}{\rho}+K,
\end{equation*}
where $u=u^i\frac{\partial}{\partial x^i} \in T_xM$ is a unit vector.
\end{theorem}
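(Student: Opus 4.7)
My plan is to carry out the classical Riemannian Hessian-comparison argument in the Finsler setting, using the Chern connection along $\gamma$ with reference vector $T := \dot\gamma$. Because $p$ is a pole, no point of $\gamma$ is conjugate to $p$ along $\gamma$, so the Finsler Jacobi fields along $\gamma$ are well-defined and nondegenerate on $(0, r]$.

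The first step is to reduce the bound to the normal component of $u$. Along $\gamma$, the $g_T$-gradient of $\rho$ coincides with $T$, hence $D_T \nabla\rho = D_T T = 0$, and the Hessian satisfies $H(\rho)(T, V) = 0$ for every $V \in T_xM$. Writing $u = aT + u^\perp$ in the $g_{T}$-orthogonal decomposition at $x$ reduces the desired inequality to
\begin{equation*}
H(\rho)(u^\perp, u^\perp) \;\le\; K\coth(K\rho)\,g_{T}(u^\perp, u^\perp).
\end{equation*}

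To prove this, let $J$ be the unique Jacobi field along $\gamma$ with $J(0)=0$ and $J(r)=u^\perp$. The second variation formula for arc length yields
\begin{equation*}
H(\rho)(u^\perp, u^\perp) \;=\; I(J,J) \;:=\; \int_0^r \bigl[g_{T}(D_T J, D_T J) - g_{T}(R(J, T)T, J)\bigr]\, dt.
\end{equation*}
By the Finsler Index Lemma, $I(J,J) \le I(W,W)$ for any piecewise-smooth $W$ along $\gamma$ with $W(0)=0$ and $W(r)=u^\perp$. I take $W(t) = \tfrac{\sinh(Kt)}{\sinh(Kr)}\,E(t)$ with $E$ the parallel extension of $u^\perp$ along $\gamma$; since $\gamma$ is a geodesic, $g_T(E,E)$ is constant. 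The radial flag-curvature bound provides $-g_{T}(R(W,T)T, W) \le K^2\,g_{T}(W,W)$, and integration by parts using $f''=K^2 f$ produces $I(W,W) \le K\coth(Kr)\,g_{T}(u^\perp, u^\perp)$.

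Combining this with the elementary inequality $K\coth(Kr) \le K + 1/r$ (equivalent to $2Kr\le e^{2Kr}-1$, immediate from the Taylor expansion of $e^{2Kr}$), using $r=\rho(x)$ and $g_T(u^\perp, u^\perp) \le 1$ for a unit $u$, completes the proof. The main obstacle is verifying that the classical second variation formula and the Index Lemma retain their Riemannian form in the Finsler category once $T = \dot\gamma$ is fixed as the reference vector for the Chern connection, with the flag curvature in the flag pole $T$ playing the role of the sectional curvature; once this structural groundwork is in place, the model-space computation and the $\coth$ estimate are routine.
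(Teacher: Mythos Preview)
Your argument is correct and follows the same overall architecture as the paper: one identifies $H(\rho)(u^\perp,u^\perp)$ with the index form $I(J,J)$ of the Jacobi field $J$ with $J(0)=0$, $J(r)=u^\perp$ (this is the paper's Proposition~3.1), then invokes the Finsler Index Lemma (the paper's Proposition~3.2) with a carefully chosen comparison field. The difference lies only in that choice. The paper takes $\xi(t)=(t/r)^{\alpha}\eta(t)$ with $\eta$ parallel and $\eta(r)=u$, computes
\[
I(\xi,\xi)\le \frac{1}{r}+\frac{(\alpha-1)^2}{(2\alpha-1)r}+\frac{K^2 r}{2\alpha+1},
\]
and then balances the last two terms over $\alpha>1$ to obtain $1/r+K$. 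You instead use the model-space field $W(t)=\dfrac{\sinh(Kt)}{\sinh(Kr)}E(t)$, which yields the sharper intermediate bound $K\coth(Kr)\,g_T(u^\perp,u^\perp)$ directly and then only needs the elementary inequality $K\coth(Kr)\le K+1/r$. Your route is the standard Riemannian comparison and is cleaner; the paper's power-function trial avoids hyperbolic functions but pays for it with an optimization. One small terminological remark: the paper works with the Cartan connection rather than the Chern connection, but along the geodesic with reference vector $T=\dot\gamma$ the two agree on the relevant covariant derivatives and curvature terms, so your computation goes through verbatim.
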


Using Theorem \ref{T-1.1}, we obtain the following corollary.
\begin{corollary}(cf. Corollary 3.2)
Suppose that $(M,G)$ is a real Finsler manifold with a pole $p$ such that its radial flag curvature is bounded from below by a negative constant $-K^2$. Suppose that $\gamma:[0,r] \rightarrow M$ is a normal geodesic with $\gamma(0)=p$ such that $\gamma(r)=x \neq p$. Denote the distance function from $p$ to $x$ by $\rho(x)$. Then with respect to the normal coordinates at the point $x$,
\begin{equation*}
\frac{\partial^2 \rho^2}{\partial x^i \partial x^j}u^iu^j \leq 2(2+\rho K).
\end{equation*}
\end{corollary}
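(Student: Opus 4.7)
The plan is to reduce the coordinate Hessian of $\rho^2$ to the Hessian of $\rho$ via the product rule and then invoke Theorem \ref{T-1.1}. First I would expand
\begin{equation*}
\frac{\partial^2 \rho^2}{\partial x^i\partial x^j} = 2\frac{\partial\rho}{\partial x^i}\frac{\partial\rho}{\partial x^j} + 2\rho\,\frac{\partial^2\rho}{\partial x^i\partial x^j},
\end{equation*}
so that after contracting with $u^iu^j$ the estimate splits into a first-order piece $2(u^i\partial_i\rho)^2$ and a second-order piece $2\rho\,(\partial^2_{ij}\rho)\,u^iu^j$.

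For the first-order piece, I would use that on the open set where $\rho$ is smooth the differential $d\rho$ has dual Finsler norm equal to one (since $\rho$ is the Finsler distance from a pole), so the duality inequality together with the hypothesis $G(u)=1$ yields $|u^i\partial_i\rho| = |d\rho(u)| \leq G^\ast(d\rho)\,G(u) = 1$, hence $2(u^i\partial_i\rho)^2 \leq 2$. For the second-order piece, the choice of normal coordinates at $x$ is exactly what guarantees that the ordinary coordinate second derivatives of $\rho$ at $x$ agree with the components of its covariant Hessian $H(\rho)$, so that
\begin{equation*}
\frac{\partial^2\rho}{\partial x^i\partial x^j}(x)\,u^iu^j = H(\rho)(u,u)(x).
\end{equation*}
Theorem \ref{T-1.1} then supplies $H(\rho)(u,u)(x) \leq \frac{1}{\rho}+K$. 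Assembling the two contributions yields
\begin{equation*}
\frac{\partial^2 \rho^2}{\partial x^i\partial x^j}u^iu^j \leq 2 + 2\rho\Bigl(\frac{1}{\rho}+K\Bigr) = 2(2+\rho K),
\end{equation*}
which is the desired bound.

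The main point requiring care is the identification, in the paper's conventions for Finsler normal coordinates at $x$, of the coordinate Hessian of $\rho$ with its covariant Hessian $H(\rho)$, because in Finsler geometry the connection coefficients depend on direction and the vanishing of the relevant symbols at the base point cannot be invoked as casually as in the Riemannian case. Once this identification, together with the unit dual-norm property of $d\rho$, is in place, the corollary follows from the short algebraic computation above.
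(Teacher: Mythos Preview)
Your proposal is correct and follows essentially the same route as the paper: the paper records the invariant identity $H(\rho^2)(u,u)=2(d\rho(u))^2+2\rho\,H(\rho)(u,u)$ and then passes to normal coordinates via Corollary~\ref{C-3.1}, while you apply the coordinate product rule directly and then identify the coordinate second derivatives with $H(\rho)$---these are the same computation in a different order. The only cosmetic difference is that the paper bounds $(d\rho(u))^2\le 1$ by expanding $u$ in a $g_{\dot\gamma}$-orthonormal frame along $\gamma$ rather than via your dual-norm argument, and it routes the Hessian bound through the intermediate Corollary~\ref{C-3.1} rather than invoking Theorem~\ref{a} directly.
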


 If $M$ is a complex manifold endowed with a strongly convex complex Finsler metric $G$, that is, $G$ is simultaneously a real Finsler metric on $M$ when one ignores its complex structure, then by Lemma \ref{b}and \ref{v}, we are able to establish the following theorem which gives an estimation of the Levi-form of the distance function $\rho(z)$ in terms of the radial flag curvature of a strongly convex weakly K\"ahler-Finsler metric. The following theorem plays an important role in the proof of the main theorem in this paper.
\begin{theorem}\label{T-1.4}(cf. Theorem 4.1)
Suppose that $(M,G)$ is a strongly convex weakly K\"ahler-Finsler manifold with a pole $p$ such that its radial flag curvature is bounded from below by a negative constant $-K^2$. Suppose that  $\gamma:[0,r] \rightarrow M$ is a  geodesic with $G(\dot{\gamma})\equiv 1$ such that $\gamma(0)=p$ and $\gamma(r)=z \neq p$. Denote the distance function from $p$ to $z$ by $\rho(z)$. Then
\begin{equation*}
\frac{\partial^2 \rho^2}{\partial z^\alpha\partial \bar{z}^\beta}v^\alpha\bar{v}^\beta \leq (2+\rho K),
\end{equation*}
where $v=v^\alpha\frac{\partial}{\partial z^{\alpha}}=\frac{1}{2}(u-\sqrt{-1}Ju) \in T_z^{1,0}M$ is a unit vector and $u=u^i\frac{\partial}{\partial x^i}\in T_xM$.
\end{theorem}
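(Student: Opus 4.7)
The overall strategy is to bound the Levi form of $\rho^2$ by reducing it to the real Hessian bound established in Corollary 3.2, applied twice: once to a unit real vector $u$ and once to its $J$-image $Ju$.

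First I would record the algebraic identity that, for any $C^2$ real-valued function $f$ on a complex manifold and any $v = \frac{1}{2}(u - \sqrt{-1}Ju) \in T_z^{1,0}M$, computation in holomorphic coordinates $z^\alpha = x^\alpha + \sqrt{-1}x^{\alpha+n}$ gives
\begin{equation*}
4 \frac{\partial^2 f}{\partial z^\alpha \partial \bar z^\beta} v^\alpha \bar v^\beta = \frac{\partial^2 f}{\partial x^i \partial x^j} u^i u^j + \frac{\partial^2 f}{\partial x^i \partial x^j}(Ju)^i(Ju)^j.
\end{equation*}
This is a purely algebraic identity in any holomorphic patch (the cross terms involving $f_{x^i y^j}$ cancel after using the symmetry of mixed partials together with the (anti)symmetry of $v^\alpha\bar v^\beta\pm \bar v^\alpha v^\beta$). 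Lemmas \ref{b} and \ref{v} are then invoked to choose coordinates at $z$ which are simultaneously holomorphic and normal for the real Finsler structure underlying the strongly convex metric $G$; the weakly K\"ahler-Finsler hypothesis is exactly what allows such a compatible choice at the point $z$ and makes Corollary 3.2 applicable to the real coordinates underlying the holomorphic ones.

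Next I would verify that both $u$ and $Ju$ are unit real vectors with respect to $G$ viewed as a real Finsler metric. Complex homogeneity gives $G(\sqrt{-1}v) = |\sqrt{-1}|\,G(v) = G(v) = 1$, and under the standard $\mathbb{R}$-linear isomorphism $T^{1,0}_z M \cong T_z M$ sending $\frac{1}{2}(w - \sqrt{-1}Jw) \leftrightarrow w$, the element $\sqrt{-1}v$ corresponds exactly to the real vector $Ju$. Hence $G(u) = G(Ju) = 1$. Moreover, for a weakly K\"ahler-Finsler metric $\gamma$ is simultaneously a geodesic of the complex and of the real Finsler structure, so the condition $G(\dot\gamma) \equiv 1$ places $\gamma$ in the hypothesis of Corollary 3.2. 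Applying that corollary separately to the unit real vectors $u$ and $Ju$ yields
\begin{equation*}
\frac{\partial^2 \rho^2}{\partial x^i \partial x^j} u^i u^j \leq 2(2+\rho K), \qquad \frac{\partial^2 \rho^2}{\partial x^i \partial x^j} (Ju)^i (Ju)^j \leq 2(2+\rho K).
\end{equation*}
Substituting these two bounds into the identity above and dividing by $4$ produces the claimed inequality $\frac{\partial^2 \rho^2}{\partial z^\alpha \partial \bar z^\beta} v^\alpha \bar v^\beta \leq 2+\rho K$.

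The main obstacle I expect is the coordinate compatibility required in the first step: the Levi form is most naturally computed in holomorphic coordinates, while Corollary 3.2 delivers its bound in real normal coordinates. For a general strongly convex complex Finsler metric, these two coordinate systems need not align, and a naive identification of the two Hessians picks up torsion-type correction terms measuring the failure of $J$ to be parallel with respect to the Chern-Finsler connection. The weakly K\"ahler-Finsler condition, channelled through Lemmas \ref{b} and \ref{v}, is precisely the input that forces these correction terms to vanish (or to cancel pairwise after symmetrization over $u$ and $Ju$), allowing the clean identity above. Once this compatibility is arranged, the rest of the argument is a direct application of Corollary 3.2 followed by arithmetic.
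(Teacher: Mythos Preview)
Your approach is essentially the paper's: split the Levi form of $\rho^2$ into the two real Hessians along $u$ and $Ju$ via Lemma~\ref{v}, then bound each by Corollary~3.2 using $G^\circ(u)=G^\circ(Ju)=G(v)=1$.

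One clarification on how the weakly K\"ahler-Finsler hypothesis actually enters. You describe Lemmas~\ref{b} and~\ref{v} as allowing a choice of coordinates that are ``simultaneously holomorphic and normal for the real Finsler structure''; this is not quite what happens, and it is not clear such coordinates exist in the weakly K\"ahler-Finsler setting. The paper instead uses Lemma~\ref{v} coordinate-free: for weakly K\"ahler-Finsler $G$ one has the identity
\[
4\,\frac{\partial^2 f}{\partial z^\alpha \partial \bar z^\beta}\,v^\alpha\bar v^\beta \;=\; D^2f(u,u)+D^2f(Ju,Ju),
\]
where $D^2f=H(f)$ is the \emph{covariant} Hessian with respect to the Cartan connection. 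No special coordinates are chosen; the connection terms you worry about are exactly what the weakly K\"ahler condition kills in the proof of Lemma~\ref{v}. Correspondingly, the right way to read Corollary~3.2 is as the coordinate-free bound $H(\rho^2)(u,u)\le 2(2+\rho K)$ for unit $u$ (its proof shows this, the ``normal coordinates'' phrasing being only a convenience). With these two observations the argument closes immediately, and this is precisely what the paper does. Your final paragraph already intuits this (``correction terms \dots\ cancel pairwise''), so the only fix needed is to drop the ``compatible coordinates'' narrative and invoke Lemma~\ref{v} directly.
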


 Using Theorem \ref{T-1.4}, we are able to establish the main theorem in this paper as follows.
\begin{theorem} \label{T-1.3}(cf. Theorem 6.2)
Suppose that $(M,G)$ is a complete strongly convex weakly K\"ahler-Finsler manifold such that its radial flag curvature is bounded from  below and its holomorphic sectional curvature $K_G$ is bounded from below by a constant $K_1 \leq 0$. Suppose that $(N,H)$ is a strongly pseudoconvex complex Finsler manifold such that its holomorphic sectional curvature $K_H$ is bounded from above by a constant $K_2<0$.  Let $f:M\rightarrow N$ be a holomorphic mapping. Then
\begin{equation*}
(f^*H)(z;dz) \leq \frac{K_1}{K_2}G(z;dz).
\end{equation*}
\end{theorem}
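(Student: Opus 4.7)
The plan is to extend the Ahlfors--Yau--Royden maximum-principle argument to the complex Finsler category, using Theorem \ref{T-1.4} as the substitute for the Laplacian comparison theorem from the Kähler case of Theorem \ref{t-1.2}. The weakly Kähler-Finsler hypothesis on $G$ is the key ingredient that allows the holomorphic sectional curvature of the domain to emerge cleanly from the complex Hessian of $\log G$ along a $G$-geodesic, without contamination from horizontal torsion terms of the Chern--Finsler connection.

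Fix a pole $p\in M$ and let $\rho(z)=\mathrm{dist}_{G}(p,z)$. Introduce the $0$-homogeneous function
\[
u(z,v) \;=\; \frac{(f^{*}H)(z;v)}{G(z;v)},\qquad v\in T^{1,0}_{z}M\setminus\{0\},
\]
set to zero where $df(v)=0$; the target estimate is equivalent to $u\le K_{1}/K_{2}$ pointwise. For each $R>0$, form the cut-off
\[
\psi_{R}(z,v) \;=\; \bigl(R^{2}-\rho^{2}(z)\bigr)^{2}\,u(z,v)
\]
on the $G$-unit sphere bundle over $B_{R}(p)$. Since $\psi_{R}$ vanishes on $\partial B_{R}(p)$ and is continuous, it attains its maximum at an interior point $(z_{R},v_{R})$ with $G(z_{R};v_{R})=1$.

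At $(z_{R},v_{R})$, the interior-maximum condition forces $\partial_{\alpha}\bar\partial_{\beta}\log\psi_{R}\cdot v_{R}^{\alpha}\bar v_{R}^{\beta}\le 0$. Three ingredients feed into this inequality. First, the upper bound $K_{H}\le K_{2}<0$ on the holomorphic sectional curvature of the target gives
\[
\partial_{\alpha}\bar\partial_{\beta}\log(f^{*}H)\cdot v_{R}^{\alpha}\bar v_{R}^{\beta}\;\ge\;-K_{2}\,(f^{*}H)(z_{R};v_{R}).
\]
Second, the weakly Kähler-Finsler structural identity together with the lower bound $K_{G}\ge K_{1}$ gives
\[
\partial_{\alpha}\bar\partial_{\beta}\log G\cdot v_{R}^{\alpha}\bar v_{R}^{\beta}\;\le\;-K_{1}\,G(z_{R};v_{R})\;=\;-K_{1}.
\]
Third, Theorem \ref{T-1.4} bounds $\partial_{\alpha}\bar\partial_{\beta}\rho^{2}\cdot v^{\alpha}\bar v^{\beta}\le 2+\rho K$, from which a direct calculation controls the Levi form of $2\log(R^{2}-\rho^{2})$ from below by $-C(1+\rho^{2})/(R^{2}-\rho^{2})^{2}$ for an explicit constant $C=C(K)$. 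Assembling these bounds in $\partial\bar\partial\log\psi_{R}\le 0$ yields
\[
-K_{2}\,u(z_{R},v_{R})\;+\;K_{1}\;\le\;\frac{C\bigl(1+\rho(z_{R})^{2}\bigr)}{\bigl(R^{2}-\rho^{2}(z_{R})\bigr)^{2}}.
\]
Multiplying by $(R^{2}-\rho^{2}(z_{R}))^{2}$ and using $\psi_{R}(z_{R},v_{R})\ge\psi_{R}(z,v)$ for every fixed $G$-unit $(z,v)$ with $\rho(z)<R$, then dividing by $(R^{2}-\rho^{2}(z))^{2}$ and letting $R\to\infty$, yields $u(z,v)\le K_{1}/K_{2}$. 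The $0$-homogeneity of $u$ in $v$ upgrades this to the stated inequality $(f^{*}H)(z;dz)\le (K_{1}/K_{2})G(z;dz)$.

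The main obstacle is the second of the three ingredients above. In a generic strongly pseudoconvex complex Finsler metric, the complex Hessian of $\log G$ along a geodesic picks up extra terms from the horizontal torsion of the Chern--Finsler connection that are not absorbed into the holomorphic sectional curvature $K_{G}$. A careful application of the weakly Kähler-Finsler equations shows that precisely these obstructive terms vanish along the tangent direction of a $G$-geodesic, which is why this hypothesis appears in the statement. The cut-off estimate in ingredient three is also delicate: the quadratic prefactor $(R^{2}-\rho^{2})^{2}$ is chosen so that its derivatives combine with the linear-in-$\rho$ growth in Theorem \ref{T-1.4} to produce the $R^{-2}$ decay required by the final limit.
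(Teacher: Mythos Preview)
Your approach diverges from the paper's in a way that leaves a genuine gap. The paper does \emph{not} work on the sphere bundle of $M$ at all; instead it composes with an auxiliary holomorphic map $\varphi:\Delta\to M$ from the Poincar\'e disk, and builds the test function
\[
\Phi(\zeta)=\bigl[a^{2}-\phi(\varphi(\zeta))\bigr]^{2}\bigl[b^{2}-\varrho^{2}(\zeta)\bigr]^{2}\,\frac{(f\circ\varphi)^{*}H}{\varphi^{*}G}
\]
as a function of the single variable $\zeta\in\Delta$, with a \emph{double} cutoff (radius $a$ on $M$, radius $b$ on $\Delta$). The reason this one–variable reduction matters is that both curvature inputs then come from Lemma~\ref{L-2.1}: the holomorphic sectional curvature of a strongly pseudoconvex complex Finsler metric equals the \emph{maximum} of the Gaussian curvatures of the induced metrics over all tangent holomorphic curves. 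For the target this immediately yields $\partial_{\zeta}\bar\partial_{\zeta}\log\sigma^{2}\ge -2K_{2}\sigma^{2}$ for \emph{every} $\varphi$, and for the domain one exploits the freedom in the choice of $\varphi$ to hit the maximizing curve.

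Your ``second ingredient'' is exactly where this breaks down. When you freeze $v_{R}$ in local coordinates and differentiate $z\mapsto\log G(z;v_{R})$, the quantity $\partial_{\alpha}\bar\partial_{\beta}\log G\cdot v_{R}^{\alpha}\bar v_{R}^{\beta}$ is (up to sign and a factor) the Gaussian curvature of the \emph{particular} linear curve $\zeta\mapsto z_{R}+\zeta v_{R}$ in those coordinates. Lemma~\ref{L-2.1} only tells you this Gaussian curvature is $\le K_{G}(v_{R})$; it gives no lower bound, so the hypothesis $K_{G}\ge K_{1}$ does \emph{not} deliver $\partial_{\alpha}\bar\partial_{\beta}\log G\cdot v_{R}^{\alpha}\bar v_{R}^{\beta}\le -K_{1}$. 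Your justification---that the weakly K\"ahler-Finsler identity kills torsion terms ``along the tangent direction of a $G$-geodesic''---is misplaced on two counts: $v_{R}$ is the maximizing fibre direction of $u(z_{R},\cdot)$, not a geodesic direction; and in the paper the weakly K\"ahler hypothesis is not used to control $\partial\bar\partial\log G$ at all, but rather (through Lemma~\ref{v}) to pass from the real Hessian of $\rho^{2}$ to its Levi form in Theorem~\ref{T-4.1}, i.e.\ it feeds into your \emph{third} ingredient, not the second.

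Two further points the paper handles that your sketch omits: (i) $\rho^{2}$ is only $C^{1}$ at the pole $p$, so the paper replaces it by a smoothing $\phi$ (Lemma~\ref{L-5.3}) before applying the maximum principle; (ii) the passage from ``$M$ has a pole'' to ``$M$ is complete'' requires a separate cut-locus argument (Theorem~\ref{T-6.1b}) via the Calabi trick of shifting the base point past the first cut point along the minimizing geodesic.
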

\begin{remark}
If $(M,G)$ come from K\"ahler manifolds, then Theorem \ref{T-1.3} is exactly Theorem 1.1 in \cite{nie}.
\end{remark}

As an immediate application of Theorem \ref{T-1.3}, if the complex Finsler metric $H$ on  $N$  comes from a Hermitian metric, then we obtain a Schwarz lemma from a strongly convex weakly K\"ahler-Finsler manifold into a Hermitian manifold.
\begin{corollary}(cf. Corollary 7.3)
Suppose that $(M,G)$ is a complete strongly convex weakly K\"ahler-Finsler manifold such that its radial flag curvature is bounded from below and its holomorphic sectional curvature $K_G$ is bounded from below by a constant $K_1 \leq 0$. Suppose that $(N,ds^2_N)$ is a Hermitian manifold such that its holomorphic sectional curvature $K_H$ is bounded from above by a constant $K_2<0$.  Let $f:M\rightarrow N$ be a holomorphic mapping. Then
\begin{equation*}
f^*ds_N^2 \leq \frac{K_1}{K_2}G(z;dz).
\end{equation*}
\end{corollary}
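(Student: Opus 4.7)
The plan is to derive this corollary by specializing Theorem \ref{T-1.3} to the case where the target Finsler metric $H$ on $N$ is the one canonically induced by the Hermitian metric $ds_N^2$. Writing $ds_N^2 = h_{\alpha\bar\beta}(w)\, dw^\alpha\, d\bar w^\beta$ in local coordinates on $N$, I would set
$$H(w;\eta) = h_{\alpha\bar\beta}(w)\, \eta^\alpha \bar\eta^\beta, \qquad \eta \in T^{1,0}_w N.$$
Since $(h_{\alpha\bar\beta})$ is smooth and positive definite, $H$ is smooth away from the zero section of $T^{1,0}N$, $(2,0)$-homogeneous, and its Levi matrix along fibers coincides with $(h_{\alpha\bar\beta})$; hence $H$ is a strongly pseudoconvex complex Finsler metric in the sense of Abate and Patrizio \cite{abate}.

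The second step is to reconcile the two notions of holomorphic sectional curvature. When $H$ is Hermitian quadratic, the Chern-Finsler connection associated with $H$ reduces to the Chern connection of $ds_N^2$ and has curvature independent of the fiber direction. A short computation then identifies the Finsler holomorphic sectional curvature $K_H(\eta)$ with the holomorphic sectional curvature of $ds_N^2$ along the complex line spanned by $\eta$. Consequently the hypothesis $K_H \leq K_2$ in the Hermitian statement is precisely the hypothesis required by Theorem \ref{T-1.3} applied to our chosen $H$.

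Finally, for any $v \in T^{1,0}_zM$, a direct computation yields
$$(f^*H)(z;v) = h_{\alpha\bar\beta}(f(z))\, \frac{\partial f^\alpha}{\partial z^i}\, \overline{\frac{\partial f^\beta}{\partial z^j}}\, v^i \bar v^j = (f^* ds_N^2)(v,\bar v),$$
so the Finsler-theoretic pullback of $H$ and the Hermitian pullback of $ds_N^2$ agree as pointwise quadratic forms on $T^{1,0}M$. Invoking Theorem \ref{T-1.3} for $(M,G)$ and $(N,H)$ therefore delivers the desired inequality $f^* ds_N^2 \leq (K_1/K_2)\, G(z;dz)$. I do not foresee any genuine obstacle here: the only delicate point is matching the curvature and strong-pseudoconvexity conventions between the Hermitian and Finsler frameworks, which is a routine verification in the Abate-Patrizio formalism.
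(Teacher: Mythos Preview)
Your proposal is correct and follows essentially the same route as the paper: the paper simply states that the corollary is obtained from Theorem~\ref{T-6.1b} (i.e., Theorem~\ref{T-1.3}) by taking $H$ to be the Hermitian metric $ds_N^2$, relying on the remark after Definition~\ref{D-4.1} that every smooth Hermitian metric is a strongly pseudoconvex complex Finsler metric. You have merely spelled out the routine verifications that the paper leaves implicit.
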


As a simple application of Theorem \ref{T-1.3}, we obtain the following corollaries about the existence of non-constant holomorphic mapping between two complex Finsler manifolds.

\begin{corollary} \label{T-1.3a} (cf. Corollary 7.1)
Suppose that $(M,G)$ is a complete strongly complete convex weakly K\"ahler-Finsler manifold such that its radial flag curvature is bounded from below and its holomorphic sectional curvature is non-negative. Suppose that $(N,H)$ is a strongly pseudoconvex complex Finsler manifold such that its holomorphic sectional curvature $K_H$ is bounded from above by a constant $K_2<0$.  Then any holomorphic mapping $f$ from $M$ into $N$ is a constant.
\end{corollary}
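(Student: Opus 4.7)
The plan is to derive this as an immediate specialization of Theorem \ref{T-1.3} (Theorem 6.2). The hypothesis that the holomorphic sectional curvature $K_G$ of $(M,G)$ is non-negative means precisely that $K_G$ is bounded from below by the constant $K_1=0$, and this choice satisfies the admissibility condition $K_1\leq 0$ required by Theorem \ref{T-1.3}. All the remaining hypotheses of Theorem \ref{T-1.3} are verbatim those assumed here: $(M,G)$ is complete, strongly convex, weakly K\"ahler-Finsler with radial flag curvature bounded from below, and $(N,H)$ is strongly pseudoconvex with $K_H\leq K_2<0$.

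Given these, applying Theorem \ref{T-1.3} with $K_1=0$ yields, for any holomorphic mapping $f:M\to N$ and any $(z,dz)\in T^{1,0}M$,
\begin{equation*}
(f^{*}H)(z;dz)\leq \frac{K_{1}}{K_{2}}G(z;dz)=\frac{0}{K_{2}}G(z;dz)=0.
\end{equation*}
Since $(N,H)$ is strongly pseudoconvex, $H(w;\xi)\geq 0$ with equality if and only if $\xi=0$. Writing $(f^{*}H)(z;dz)=H(f(z);f_{*}(dz))$, the inequality above combined with the nonnegativity of $H$ forces $f_{*}(dz)=0$ for every $(z,dz)\in T^{1,0}M$. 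Hence $df\equiv 0$ on $M$, and by connectedness of $M$ the map $f$ is constant.

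There is essentially no obstacle beyond a single bookkeeping check, namely that the phrase ``bounded from below by a constant $K_{1}\leq 0$'' in Theorem \ref{T-1.3} legitimately includes the borderline value $K_{1}=0$, which is precisely the content of the non-negativity assumption here. All the analytic work (distance-function Hessian estimates via Theorems \ref{T-1.1} and \ref{T-1.4}, the maximum-principle/Omori-Yau style argument, and the passage from the real Finsler Hessian to the Levi form on a weakly K\"ahler-Finsler manifold) has already been carried out in the proof of Theorem \ref{T-1.3}; the present corollary merely reads off the degenerate Liouville-type consequence when the upper bound on $f^{*}H$ collapses to zero.
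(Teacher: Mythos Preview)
Your proposal is correct and matches the paper's approach exactly: the paper simply states that this corollary follows ``By Theorem \ref{T-1.3}'' without further argument, and your proof is precisely the specialization $K_1=0$ that makes this implication explicit.
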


 To illustrate Corollary \ref{T-1.3a}, we give a concrete example as follows.
\begin{example}\label{C-1.4} (cf. Corollary 7.2)
Suppose that $(\mathbb{C}^n, G)$ is a complex Minkowski space. Suppose that $(N,H)$ is a strongly pseudoconvex complex Finsler manifold such that its holomorphic sectional curvature is bounded from above by a negative constant $K_2$. Then any holomorphic mapping $f$ from $\mathbb{C}^n$ into $N$ is a constant.
\end{example}
\begin{remark}
If $(N,H)$ is a Hermitian manifold, it follows immediately that a holomorphic mapping from a complex Minkowski space into a Hermitian manifold with holomorphic sectional curvature bounded from above a negative constant is necessary a constant.
\end{remark}
\section{Preliminaries for Finsler geometry}
\setcounter{equation}{0}
\noindent

In this section, we shall recall some basic definitions and facts on real and complex Finsler geometry. We refer to \cite{abate,bao} for more details.
\subsection{Real Finsler geomtry}
\noindent

Let $M$ be a smooth manifold of real dimension $n$, and $\pi:TM \rightarrow M$ be the tangent bundle of $M$. Let $x=(x^1, \cdots,x^n)$ be a local coordinate system on an open set $U\subset M$, then a tangent vector $u$ at the point $x\in M$ can be written as
$$u=u^i \frac{\partial}{\partial x^i}\in T_xM,$$
where the Einstein's sum convention is used throughout this paper. So that we can use $(x;u)=(x^1,\cdots,x^n; u^1,\cdots,u^n)$ as local coordinate system on $\mathcal{U}=\pi^{-1}(U)\subset TM$.
We denote by $\tilde{M}:=TM\setminus\{o\}$ the complement of the zero section in $TM$. Then
$$\Big\{\partial_i:=\frac{\partial}{\partial x^i}, \dot{\partial}_i:=\frac{\partial}{\partial u^i}\Big\}_{i=1}^n$$
 gives a local frame field of the tangent bundle $T\tilde{M}$ over $\mathcal{U}$.
\begin{definition}(\cite{abate})\label{d-2.1}
A (real) Finsler metric on $M$ is a continuous function $G:TM\rightarrow [0,+ \infty)$ satisfying the following properties:

(i) $G$ is smooth on $\tilde{M}$;

(ii) $G(x;u) \geq 0$ for all $u \in T_xM$ and $x \in M$,  and $G(x;u)=0$ if and only if $u=0$;

(iii) $G(x;\lambda u)=|\lambda|^2G(x;u)$ for all $u \in T_xM$ and $\lambda \in \mathbb{R}$;

(iv) The fundamental tensor matrix $(g_{ij})$, where
$$g_{ij}:=\frac{1}{2}G_{ij}=\frac{1}{2}\frac{\partial^2 G}{\partial u^i \partial u^j}$$
is positive definite on $\tilde{M}$.
\end{definition}
A manifold $M$ endowed with a (real) Finsler metric $G$ is called a (real) Finsler manifold. Note that by definition the smoothness of $G$ is only asked on $\tilde{M}$. In fact, a real Finsler metric $G$ is smooth on the whole $TM$ if and only if it comes from a Riemannian metric \cite{abate}.

Using the projection $\pi:TM\rightarrow M$, one can define the vertical bundle as
$$\mathcal{V}=\text{ker} d\pi \subset \tilde{M}.$$
It is clear that $\{\dot{\partial}_1, \cdots, \dot{\partial}_n\}$ is a local frame for $\mathcal{V}$. The horizontal bundle is a subbundle $\mathcal{H} \subset T\tilde{M}$ such that
$$T\tilde{M}=\mathcal{H}\oplus \mathcal{V}.$$
A local frame field for $\mathcal{H}$ is given by $\{\delta_1, \cdots,\delta_n\}$, where
\begin{equation}
\delta_i= \partial_i-\Gamma_{;i}^j\partial_j, \quad i=1,\cdots,n.\label{di}
\end{equation}
Here in \eqref{di}, we have denoted
\begin{eqnarray}
\Gamma_{:i}^j&=&\frac{1}{2}G^{jk}[G_{ki;b}u^b+G_{k;i}-G_{i;k}]-\Gamma^j_{i;k}G^{kl}[G_{l;b}u^b-G_{;l}].
\end{eqnarray}

By Definition \ref{d-2.1}, one can introduce a Riemannian structure $\langle\cdot|\cdot\rangle$ on $\mathcal{V}$ by setting
$$ \forall V,W \in \mathcal{V}_u, \quad  \langle V|W\rangle_u = \frac{1}{2}G_{ij}(u)V^iW^j.$$
Let $D: \mathcal{X}(\mathcal{V}) \rightarrow \mathcal{X}(T^*\tilde{M}\otimes \mathcal{V})$ be the Cartan connection associated to $G$. Its connection $1$-forms are given by
$$\omega_i^j= \Gamma^j_{i;k}dx^k+ \Gamma_{ik}^j\psi^k,$$
where
$$\Gamma^j_{i;k}=\frac{1}{2}G^{jl}[\delta_k(G_{il})+\delta_{i}(G_{lk})-\delta_{l}(G_{ik})],\quad \Gamma_{ik}^j=\frac{1}{2}G^{jl}G_{ikl}$$
and $\psi^k=du^k+\Gamma^k_{;l}dx^l$.
Let $\chi_u: T_{\pi(u)}M \rightarrow \mathcal{H}_u$ be the horizontal lift of a vector locally defined by $\chi_u(\partial_i|_{\pi(u)})=\delta_i|_u$. Then the radial horizontal vector field $\chi: TM \rightarrow \mathcal{H}$ is defined by $\chi(u)=\chi_u(u)$.

 Using the horizontal map $\Theta: \mathcal{V} \rightarrow \mathcal{H}$ which is locally defined by $\Theta(\dot{\partial}_i)=\delta_i$, one can transfer the Riemannian structure $\langle\cdot|\cdot\rangle$ from $\mathcal{V}$ to $\mathcal{H}$ just by setting
$$ \forall H,K \in \mathcal{H}, \quad \langle H|K\rangle=\langle\Theta^{-1}(H)|\Theta^{-1}(K)\rangle.$$
 Thus one can then define a Riemannian metric, still denoted by $\langle\cdot|\cdot\rangle$ on the whole $T\tilde{M}$, just by asking for $\mathcal{H}$ to be orthogonal to $\mathcal{V}$. Using $\Theta$, one can introduce a linear connection (still denoted by $D$) on $\mathcal{H}$ by setting
$$\forall H \in \mathcal{X}(\mathcal{H}), \quad DH=\Theta(D(\Theta^{-1}(H))).$$
Thus one can extend and obtain a good linear connection on $\mathcal{V}$ to a linear connection on $T\tilde{M}$. It is then easy to check that
$$X\langle Y| Z\rangle = \langle D_XY|Z \rangle + \langle Y| D_XZ \rangle$$
for all $X, Y, Z \in \mathcal{X}(T\tilde{M})$.

Let $\nabla: \mathcal{X}(T\tilde{M})\times \mathcal{X}(T\tilde{M}) \rightarrow \mathcal{X}(T\tilde{M})$ be the covariant differentiation that associated to the Cartan connection $D$. Its curvature $\Omega$ is given by
$$\Omega(X,Y)Z=\nabla_X\nabla_YZ-\nabla_Y\nabla_XZ-\nabla_{[X,Y]}Z$$
for all $X,Y,Z \in \mathcal{X}(T\tilde{M})$.

 Now we are in a position to introduce the flag curvature in real Finsler geometry. It is a natural generalization of the sectional curvature in Riemannian geometry. Given $x \in M$, a flag in the tangent space $T_xM$ is a pair $(P,u)$, where $P$ is a two-dimensional subspace (tangent plane) of $T_xM$ such that $0 \neq u \in P$ and $P=\mbox{span}\{u,X\}$. The flag curvature $K^G(P,u)$ is given by
\begin{equation*}\begin{split}
K^G(P,u)=K^G(u,X)&=\frac{\langle \Omega(X^H,\chi(u))\chi(u)|X^H\rangle_u}{\langle \chi(u)|\chi(u)\rangle_u \langle X^H|X^H \rangle_u-\langle \chi(u)|X^H\rangle_u^2}\\
&=\frac{\langle \Omega(\chi(u),X^H)X^H|\chi(u)\rangle_u}{\langle \chi(u)|\chi(u)\rangle_u \langle X^H|X^H \rangle_u-\langle \chi(u)|X^H\rangle_u^2},
\end{split}\end{equation*}
where $X^H$ is the horizontal lifting of $X\in T_xM$ and $\langle\cdot|\cdot\rangle_u$ is the Riemannian metric on $\mathcal{H}_u$ induced by the fundamental metric tensor $(g_{ij})$ of $G$.

Let $\gamma:[0,r] \rightarrow M$ be a regular curve, and $\xi$ a vector field along $\gamma$. We say that $\xi$ is parallel along $\gamma$ if $\nabla_{T^H}\xi^H=0$, where $T=\dot{\gamma}$. The length of the regular curve $\gamma$ with respect to the Finsler metric is given by
$$L(\gamma)=\int_0^r (G(\dot{\gamma}(t)))^{\frac{1}{2}}dt.$$
If $\Sigma(s,t):(-\varepsilon,\varepsilon)\times [0,r] \rightarrow M$ is a regular variation of $\gamma$, we can define the function $l_{\Sigma}:(-\varepsilon,\varepsilon) \rightarrow [0,+\infty)$ by $l_{\Sigma}(s)=L(\gamma_s)$, where $\gamma_s(t)=\Sigma(s,t)$. By Corollary 1.5.2 in \cite{abate}, we know that $\gamma$ is a geodesic iff $\frac{dl_{\Sigma}}{ds}(0)=0$, that is $\nabla_{T^H}{T^H}$=0, where $T=\dot{\gamma}$.
 \begin{theorem}(\cite{abate})\label{l}
Let $G: TM \rightarrow [0,+\infty)$ be a Finsler metric on a manifold $M$. Take a geodesic $\gamma_0:[0,r]\rightarrow M$, with $G(\dot{\gamma}_0)\equiv 1$, and let $\Sigma: (-\varepsilon,\varepsilon)\times [0,r] \rightarrow M$ be a regular variation of $\gamma_0$. Then
\begin{equation*}\begin{split}
\frac{d^2l_{\Sigma}}{ds^2}(0)&=\langle\nabla_{U^H}U^H|T^H\rangle_{\dot{\gamma}_0}|_0^r\\
&+\int^r_0\Big[\|\nabla_{T^H}U^H\|^2_{\dot{\gamma}_0}-\langle\Omega(T^H,U^H)U^H|T^H\rangle_{\dot{\gamma}_0}-\Big|\frac{\partial}{\partial t}\langle U^H|T^H\rangle_{\dot{\gamma}_0}\Big|^2\Big]dt,
\end{split}\end{equation*}
where $\|H\|^2=\langle H|H\rangle_u$ for all $u \in \tilde{M}$, $H \in \mathcal{H}_u$ and $T=\dot{\gamma}, U= \Sigma_*(\frac{\partial}{\partial s})|_{s=0}$. In particular, if the variation $\Sigma$ is fixed we have
\begin{equation*}
\frac{d^2l_{\Sigma}}{ds^2}(0)=
\int^r_0\Big[\|\nabla_{T^H}U^H\|^2_{\dot{\gamma}_0}-\langle\Omega(T^H,U^H)U^H|T^H\rangle_{\dot{\gamma}_0}-\Big|\frac{\partial}{\partial t}\langle U^H|T^H\rangle_{\dot{\gamma}_0}\Big|^2\Big]dt.
\end{equation*}
\end{theorem}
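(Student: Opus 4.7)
The plan is to mimic the classical Synge derivation of the second variation of arc length, adapting it to the Cartan connection on the Finsler manifold. I parametrize on the variation surface $\Sigma(s,t)$ by $T_s=\Sigma_*(\partial/\partial t)$ and $U_s=\Sigma_*(\partial/\partial s)$, so that $T_0=\dot\gamma_0$, $[T_s,U_s]=0$ on $M$, and $\chi(T_s)=T^H_s$ along $\Sigma$. Hence
\begin{equation*}
l_\Sigma(s)=\int_0^r\langle T^H_s|T^H_s\rangle_{T_s}^{1/2}\,dt,
\end{equation*}
and differentiating under the integral using metric compatibility of $D$ gives
\begin{equation*}
\frac{dl_\Sigma}{ds}=\int_0^r G(T_s)^{-1/2}\,\langle \nabla_{U^H}T^H\,|\,T^H\rangle\,dt.
\end{equation*}

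Differentiating a second time and specializing to $s=0$, where $G(T_0)\equiv 1$, splits the result into two pieces: the $s$-derivative of the prefactor, contributing $-\int_0^r\langle \nabla_{U^H}T^H|T^H\rangle^2\,dt$, and the $s$-derivative of the numerator, contributing $\int_0^r\bigl[\langle \nabla_{U^H}\nabla_{U^H}T^H|T^H\rangle+\|\nabla_{U^H}T^H\|^2\bigr]\,dt$. Along the base geodesic the identity
\begin{equation*}
\nabla_{U^H}T^H=\nabla_{T^H}U^H
\end{equation*}
holds because $T^H$ coincides with the radial horizontal field $\chi(T_0)$, on which the (otherwise nontrivial) torsion of the Cartan connection vanishes, and because $[T,U]=0$ downstairs kills the horizontal part of $[U^H,T^H]$. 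Combined with $\nabla_{T^H}T^H=0$ and metric compatibility, this yields $\langle \nabla_{U^H}T^H|T^H\rangle=\tfrac{\partial}{\partial t}\langle U^H|T^H\rangle$, converting the prefactor contribution into the desired $-\int_0^r|\partial_t\langle U^H|T^H\rangle|^2\,dt$ and the norm term into $\|\nabla_{T^H}U^H\|^2$.

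For the remaining piece $\langle \nabla_{U^H}\nabla_{U^H}T^H|T^H\rangle=\langle \nabla_{U^H}\nabla_{T^H}U^H|T^H\rangle$, I substitute the definition of curvature
\begin{equation*}
\nabla_{U^H}\nabla_{T^H}U^H=\nabla_{T^H}\nabla_{U^H}U^H+\Omega(U^H,T^H)U^H+\nabla_{[U^H,T^H]}U^H.
\end{equation*}
The bracket term pairs to zero against $T^H$ because its horizontal part vanishes and its vertical part is $\mathcal H$-orthogonal to $T^H$. Integrating $\langle\nabla_{T^H}\nabla_{U^H}U^H|T^H\rangle=\partial_t\langle\nabla_{U^H}U^H|T^H\rangle$ (again using $\nabla_{T^H}T^H=0$) over $[0,r]$ produces the boundary term $\langle\nabla_{U^H}U^H|T^H\rangle|_0^r$, and the skew-symmetry $\Omega(U^H,T^H)=-\Omega(T^H,U^H)$ furnishes the curvature term with the stated sign. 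Assembling all contributions yields the formula, and the fixed-variation case follows since $U^H$ vanishes at both endpoints.

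The main obstacle I anticipate is the Finsler-specific reduction $\nabla_{U^H}T^H=\nabla_{T^H}U^H$ along $\gamma_0$ together with the vanishing of the $\nabla_{[U^H,T^H]}U^H$ pairing. Unlike the Riemannian case, the Cartan connection on $T\tilde M$ has non-zero mixed torsion and $[U^H,T^H]$ is generically vertical but nonzero; one must exploit that the base curve is the geodesic and that the argument of the fundamental tensor is the radial horizontal field $\chi(T_0)$ to neutralize these correction terms. Once that verification is in place, the rest of the argument transcribes the Riemannian proof nearly verbatim.
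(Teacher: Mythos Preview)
The paper does not supply its own proof of this statement; it is quoted verbatim from Abate--Patrizio \cite{abate} (Theorem~1.5.3 there) as a preliminary tool. Your outline is precisely the derivation given in that reference: differentiate $l_\Sigma(s)=\int_0^r G(T_s)^{1/2}\,dt$ twice, use metric compatibility and the exchange identity along the lifted variation, then substitute the curvature definition and integrate by parts against $\nabla_{T^H}T^H=0$ to produce the boundary term.

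One point of notation worth tightening: the $s$-direction derivative should be taken along the full tangent $\hat U=\hat\Sigma_*(\partial_s)$ to the lifted variation $\hat\Sigma(s,t)=\dot\gamma_s(t)\in\tilde M$, not merely along the horizontal lift $U^H$; in general $\hat U$ has a nonzero vertical component. The identities you flag as the main obstacle --- $\nabla_{\hat U}T^H=\nabla_{T^H}U^H$ and the vanishing of the torsion/bracket corrections when paired with $T^H=\chi(\dot\gamma_0)$ --- are exactly the content of the preparatory lemmas in \cite{abate} (Lemmas~1.5.1--1.5.2), and they hinge on $\hat\Sigma_*(\partial_t)=\chi(T)$ being the radial horizontal field together with the homogeneity relations $G_{ijk}u^k=0$, $\Gamma^j_{ik}u^k=0$. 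With that in place your argument is correct and coincides with the cited source.
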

\begin{remark}
By Lemma 4.1.1 in \cite{shen1} and the linearly parallel translations are the same with the parallel translations for a real Finsler manifold, $\langle U^H|T^H\rangle_{\dot{\gamma}_0}=g_{\dot{\gamma}_0}(U,T)$ is a constant. Therefore, we have
\begin{equation*}
\frac{d^2l_{\Sigma}}{ds^2}(0)=
\int^r_0[\|\nabla_{T^H}U^H\|^2_{\dot{\gamma}_0}-\langle\Omega(T^H,U^H)U^H|T^H\rangle_{\dot{\gamma}_0}]dt.
\end{equation*}
\end{remark}

A vector field $J$ along $\gamma$ is called a Jacobi field if it satisfies the following equation:
\begin{equation*}
\nabla_{T^H}\nabla_{T^H}J^H - \Omega(T^H,U^H)T^H \equiv 0,
\end{equation*}
where $T=\dot{\gamma}$, $T^H(\dot{\gamma})=\chi(\dot{\gamma})$ and $J^H(t)=\chi_{\dot{\gamma}}(J(t))$. The set of all Jacobi fields along $\gamma$ will be denoted by $\mathcal{J}(\gamma)$. A proper Jacobi field is a $J \in \mathcal{J}(\gamma)$ such that
\begin{equation*}
\langle J^H|T^H\rangle_T \equiv 0.
\end{equation*}
Denote by $\mathcal{J}_0(\gamma)$ the set of all proper Jacobi fields along $\gamma$.

Let $\gamma:[0,r] \rightarrow M$ be a normal geodesic in a real Finsler manifold $(M,G)$; we shall denote by $\mathcal{X}[0,r]$ the space of all piecewise smooth vector fields $\xi$ along $\gamma$ such that
\begin{equation*}
\langle\xi^H|T^H\rangle_T \equiv 0,
\end{equation*}
where $T=\dot{\gamma}$. Moreover, we shall denote by $\mathcal{X}_0[0,r]$ the subspace of all $\xi \in \mathcal{X}[0,r]$ such that $\xi(0)=\xi(r)=0$.
\begin{definition}(\cite{abate})\label{I}
The Morse index form $I=I^r_0:\mathcal{X}[0,r]\times\mathcal{X}[0,r] \rightarrow \mathbb{R}$ of the normal geodesic $\gamma:[0,r] \rightarrow M$ is the symmetric bilinear form
\begin{equation*}
I(\xi,\eta)= \int_0^r[\langle\nabla_{T^H}\xi^H|\nabla_{T^H}\eta^H\rangle_T-\langle\Omega(T^H,\xi^H)\eta^H|T^H\rangle_T]dt
\end{equation*}
for all $\xi,\eta \in \mathcal{X}[0,r]$, where $T=\dot{\gamma}$.
\end{definition}
\begin{remark}
Note that, if $\langle\nabla_{U^H}U^H(0)|T^H(0)\rangle=\langle\nabla_{U^H}U^H(r)|T^H(r)\rangle=0$ and $U \in \mathcal{X}[0,r]$ is the transversal vector of a regular variation $\Sigma$ of $\gamma$, then by Theorem \ref{l} we have
\begin{equation*}
I(U,U)=\frac{d^2l_{\Sigma}}{ds^2}(0).
\end{equation*}
\end{remark}
\subsection{Complex Finsler geometry}
\noindent

Let $M$ be a complex manifold of complex dimension $n$. Let $\{z^1,\cdots,z^n\}$ be a set of local complex coordinates, and let $\{\frac{\partial}{\partial z^{\alpha}}\}_{1 \leq \alpha \leq n}$ be the corresponding natural frame of $T^{1,0}M$. So that any non-zero element in $\tilde{M}=T^{1,0}M \setminus \{\text{zero section}\}$ can be written as
$$v=v^{\alpha}\frac{\partial}{\partial z^{\alpha}} \in \tilde{M},$$
where we also use the Einstein's sum convention. In this way, one gets a local coordinate system on the complex manifold $\tilde{M}$:
$$(z;v)=(z^1,\cdots,z^n;v^1,\cdots,v^n).$$

In this paper, we still denote a complex Finsler metric by $G$, depending on the  actual situation.
\begin{definition}(\cite{abate},\cite{kobayashi})\label{D-2.3}
A complex Finsler metric $G$ on a complex manifold $M$ is a continuous function $G:T^{1,0}M \rightarrow [0,+ \infty)$ satisfying

(i) $G$ is smooth on $\tilde{M}:=T^{1,0}M\setminus\{\mbox{zero section}\}$;

(ii) $G(z;v) \geq 0$ for all $v \in T_z^{1,0}M$ with $z \in M$, and $G(z;v)=0$ if and only if $v=0$;

(iii) $G(z;\zeta v)=|\zeta|^2G(z;v)$ for all $v \in T^{1,0}_zM$ and $\zeta \in \mathbb{C}$.

\end{definition}
\begin{definition}(\cite{abate},\cite{kobayashi}) \label{D-4.1}
A complex Finsler metric $G$ is called strongly pseudoconvex if the Levi matrix
\begin{equation*}
(G_{\alpha \overline{\beta}})= \Big(\frac{\partial^2 G}{\partial v^{\alpha}\partial \overline{v}^{\beta}}\Big)
\end{equation*}
is positive definite on $\tilde{M}$.
\end{definition}
\begin{remark}[\cite{abate}]
Any $C^\infty$ Hermitian metric on a complex manifold $M$ is naturally a strongly pseudoconvex complex Finsler metric. Conversely,
if a complex Finsler metric $G$ on a complex manifold $M$ is $C^\infty$ over the whole holomorphic tangent bundle $T^{1,0}M$, then it is necessary a $C^\infty$ Hermitian metric. That is, for any $(z;v)\in T^{1,0}M$
$$
G(z;v)=g_{\alpha\overline{\beta}}(z)v^\alpha\overline{v}^\beta
$$
for a $C^\infty$ Hermitian tensor $g_{\alpha\overline{\beta}}$ on $M$.
For this reason, in general the non-trivial (non-Hermitian quadratic) examples of complex Finsler metrics are only required to be smooth over the slit holomorphic tangent bundle $\tilde{M}$.
\end{remark}

 In the following, we follow the notions in Abate and Patrizio \cite{abate}. We shall denote   by indexes like $\alpha, \overline{\beta}$ and so on the derivatives with respect to the $v$-coordinates; for instance,
\begin{equation*}
G_{\alpha \overline{\beta}}= \frac{\partial^2 G}{\partial v^{\alpha}\partial \overline{v}^{\beta}}.
\end{equation*}
On the other hand, the derivatives with respect to the $z$-coordinates will be denoted by indexes after a semicolon; for instance,
\begin{equation*}
G_{;\mu \overline{\nu}}= \frac{\partial^2 G}{\partial z^{\mu}\partial \overline{z}^{\nu}}\quad \text{or}\quad G_{\alpha;\overline{\nu}}= \frac{\partial^2 G}{\partial \overline{z}^{\nu}\partial v^\alpha}.
\end{equation*}

 Let $G:T^{1,0}M \rightarrow [0, +\infty)$ be a complex Finsler metric on a complex manifold $M$. To $G$, we may associate a function $G^\circ: TM \rightarrow [0, +\infty)$ just by setting
$$G^\circ(u)=G(u_\circ), \quad \forall u \in TM,$$
where $_\circ:TM \rightarrow T^{1,0}M$ is an $\mathbb{R}$-isomorphism given by
$$u_\circ=\frac{1}{2}(u-iJu), \quad  \forall  u \in TM,$$
where $J$ is the canonical complex structure on $M$.\\
Correspondingly, the inverse $^\circ:T^{1,0}M \rightarrow TM$ is given by
$$v^{\circ}=v+\bar{v}, \quad \forall v \in T^{1,0}M,$$
where $\bar{v}$ denotes the complex conjugation of $v$.
\begin{definition}(\cite{abate})
A complex Finsler metric $G$ is called strongly convex if $G^\circ$ is a real Finsler metric on $M$ (considered as a smooth manifold of real dimension $2n$).
\end{definition}
Using the projective map  $\pi:T^{1,0}M\rightarrow M$, which is a holomorphic mapping, one can define the holomorphic vertical bundle
\begin{equation*}
\mathcal{V}^{1,0}:=\ker{d\pi} \subset T^{1,0}\tilde{M}.
\end{equation*}
It is obvious that $\{\frac{\partial}{\partial v^1},\cdots,\frac{\partial}{\partial v^n}\}$ is a local frame for $\mathcal{V}^{1,0}$.

The complex horizontal bundle of type $(1,0)$ is a complex subbundle $\mathcal{H}^{1,0} \subset T^{1,0}\tilde{M}$ such that
\begin{equation*}
T^{1,0}\tilde{M}=\mathcal{H}^{1,0} \oplus \mathcal{V}^{1,0}.
\end{equation*}
Note that $\{\delta_1,\cdots,\delta_n\}$ is a local frame for $\mathcal{H}^{1,0}$, where
\begin{equation*}
\delta_{\alpha}= \partial_\alpha-\Gamma_{;\alpha}^{\beta}\dot{\partial}_\beta, \quad \Gamma_{;\alpha}^{\beta}:= G^{\beta \overline{\gamma}}G_{\overline{\gamma};\alpha}.
\end{equation*}
Here and in the following, we write $\partial_\alpha:=\frac{\partial}{\partial z^\alpha}$ and $\dot{\partial}_\beta:=\frac{\partial}{\partial v^\beta}$.

For a holomorphic vector bundle whose fiber metric is a Hermitian metric, there is naturally associated a unique complex linear connection (the Chern connection or Hermitian connection) with respect to which the metric tensor is parallel. Since each strongly pseudoconvex complex Finsler metric $G$ on a complex manifold $M$ naturally induces a Hermitian metric on the holomorphic vertical bundle $\mathcal{V}^{1,0}$. It follows that there exists a unique good complex vertical connection $D:\mathcal{X}(\mathcal{V}^{1,0})\rightarrow \mathcal{X}(T_{\mathbb{C}}^\ast\tilde{M}\otimes \mathcal{V}^{1,0})$ compatible with the Hermitian structure in $\mathcal{V}^{1,0}$. This connection is called the Chern-Finsler connecction (see \cite{abate}). The connection $1$-form are given by
\begin{equation*}
\omega_{\beta}^{\alpha}:=G^{\alpha \overline{\gamma}}\partial G_{\beta \overline{\gamma}}= \Gamma_{\beta;\mu}^{\alpha}dz^{\mu}+\Gamma_{\beta \gamma}^{\alpha} \psi^{\gamma},
\end{equation*}
where
\begin{equation*}
 \Gamma_{\beta;\mu}^{\alpha} =G^{\overline{\tau}\alpha}\delta_{\mu}(G_{\beta \overline{\tau}}),\quad\Gamma_{\beta \gamma}^{\alpha}=G^{\overline{\tau}\alpha}G_{\beta \overline{\tau} \gamma},\quad \psi^{\alpha}= dv^{\alpha}+\Gamma_{;\mu}^{\alpha}dz^{\mu}.
\end{equation*}

 Let
 $$\theta^{\alpha}=\frac{1}{2}[\Gamma_{\nu;\mu}^{\alpha}-\Gamma_{\mu;\nu}^{\alpha}]dz^{\mu}\wedge dz^{\nu}+\Gamma^{\alpha}_{\nu\gamma}\psi^{\gamma}\wedge dz^{\nu}$$
 be the local expression of the $(2,0)$-torsion $\theta=\theta^{\alpha}\delta_{\alpha}$ for the Chern-Finsler connection.
 \begin{definition}(\cite{abate})
 In local coordinates, a complex Finsler metric $G$ is called strongly K\"ahler if $\Gamma^{\alpha}_{\nu;\mu}=\Gamma_{\mu;\nu}^{\alpha}$; it is called K\"ahler if $[\Gamma_{\nu;\mu}^{\alpha}-\Gamma_{\mu;\nu}^{\alpha}]v^{\mu}=0$; it is called weakly K\"ahler if $G_{\alpha} [\Gamma_{\nu;\mu}^{\alpha}-\Gamma_{\mu;\nu}^{\alpha}]v^{\mu}=0$.
 \end{definition}
 \begin{remark}
In \cite{chenb}, Chen and Shen  proved that a complex Finsler metric $G$ is a K\"ahler-Finsler metric  iff it is a strongly K\"ahler-Finsler metric, thus leaving two notions of K\"ahlerian in complex Finsler setting.
 We also point it out here that the notions of K\"ahler-Finsler metric and weakly K\"ahler-Finsler metric are not equivalent since there are indeed examples which are weakly K\"ahler-Finsler metrics but not K\"ahler-Finsler metrics  \cite{Zhou}.
 \end{remark}

 In this paper, we only consider strongly pseudoconvex complex Finsler metrics on a complex manifold $M$. The curvature form of the Chern-Finsler connection can be expressed as
\begin{equation*}
\Omega^{\alpha}_{\beta}:=R^{\alpha}_{\beta;\mu\overline{\nu}}dz^{\mu}\wedge d\overline{z}^{\nu}+R^{\alpha}_{\beta \delta;\overline{\nu}}\psi^{\delta} \wedge d\overline{z}^{\nu}+R^{\alpha}_{\beta \overline{\gamma};\mu}dz^{\mu}\wedge \overline{\psi^{\gamma}}+
R^{\alpha}_{\beta \delta \overline{\gamma}}\psi^{\delta}\wedge \overline{\psi^{\gamma}},
\end{equation*}
where
\begin{eqnarray*}
R^{\alpha}_{\beta;\mu\overline{\nu}} &=& -\delta_{\overline{\nu}}(\Gamma^{\alpha}_{\beta;\mu})-\Gamma^{\alpha}_{\beta\sigma}\delta_{\overline{\nu}}(\Gamma^{\sigma}_{;\mu}),\\
R^{\alpha}_{\beta\delta;\overline{\nu}} &=& -\delta_{\overline{\nu}}(\Gamma^{\alpha}_{\beta\delta}),\\
R^{\alpha}_{\beta\overline{\gamma};\mu}&=&-\dot{\partial}_{\overline{\gamma}}(\Gamma^{\alpha}_{\beta;\mu})-\Gamma^{\alpha}_{\beta\sigma}\Gamma^{\sigma}_{\overline{\gamma};\mu},\\
R^{\alpha}_{\beta\delta\overline{\gamma}}&=&-\dot{\partial}_{\overline{\gamma}}(\Gamma^{\alpha}_{\beta\delta}).
\end{eqnarray*}

\begin{definition}(\cite{abate})
Let $\mu =g d\zeta \otimes d \zeta$ be a Hermitian metric defined in a neighborhood of the origin in $\mathbb{C}$. Then the Gaussian curvature $K(\mu)(0)$ of $\mu$ at the origin is given by
\begin{equation*}
K(\mu)(0) = -\frac{1}{2g(0)}(\Delta \log g)(0),
\end{equation*}
where $\Delta$ denotes the usual Laplacian
\begin{equation*}
\Delta u = 4 \frac{\partial^2 u}{\partial \zeta \partial \bar{\zeta}}.
\end{equation*}
\end{definition}
For a strongly pseudoconvex complex Finsler metric $G$ on $M$,  one can also introduce the notion of holomorphic sectional curvature.
\begin{definition}(\cite{abate})
Let$(M,G)$ be a strongly pseudoconvex complex Finsler metric on a complex manifold $M$, and take $v \in \tilde{M}$. Then the holomorphic sectional curvature $K_G(v)$ of $G$ along $v$ is given by
\begin{equation*}
K_G(v)=K_G(\chi(v))=\frac{2}{G(v)^2}\langle \Omega(\chi,\bar{\chi})\chi,\chi\rangle_v,
\end{equation*}
where $\chi=v^{\alpha}\delta_{\alpha}$ is the complex radial horizontal vector field and $\Omega$ is the curvature tensor of the Chern-Finsler connection associated to $(M,G)$.
\end{definition}
In complex Finsler geometry, Abate and Patrizio\cite{abate} (see also Wong and Wu\cite{wong}) proved that the holomorphic sectional curvature of $G$ at a point $z \in M$ along a tangent direction $v \in T_z^{1,0}M$ is the maximum of the Gaussian curvatures of the induced Hermitian metrics among all complex curves in $M$ which pass through $z$ and tangent at $z$ in the direction $v$.
\begin{lemma}(\cite{abate,wong})\label{L-2.1}
Let $(M,G)$ be a complex Finsler manifold, $v \in T^{1,0}_zM$ be a nonzero tangent vector tangent at a point $z \in M$. Let $\mathcal{C}$ be the set of complex curves in $M$ passing through $z$ which are tangent to $v$ at $z$. Then the holomorphic sectional curvature $K_G(v)$ of $G$ satisfies the condition
\begin{equation*}
K_G(v)=\max_{S \in \mathcal{C}}K(S)(z),
\end{equation*}
where $K(S)$ is the Gaussian curvature of the complex curve $S$ with the induced metric.
\end{lemma}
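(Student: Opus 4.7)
The plan is to reduce the problem to a variational question over $2$-jets of curves at $z$. Any $S \in \mathcal{C}$ is locally the image of a holomorphic embedding $\varphi: U \to M$, where $0 \in U \subset \mathbb{C}$, $\varphi(0) = z$, and after reparametrizing $\zeta$ we may assume $\varphi'(0) = v$ (Gaussian curvature being invariant under reparametrization). The induced Hermitian metric on $U$ is $\mu = g(\zeta)\,d\zeta \otimes d\bar\zeta$ with $g(\zeta) = G(\varphi(\zeta), \varphi'(\zeta))$, so by the definition of Gaussian curvature
\[
K(S)(z) = -\frac{2}{g(0)}\,\partial_\zeta\partial_{\bar\zeta}\log g\,|_{\zeta=0}.
\]
Since this expression only involves the $2$-jet of $\varphi$ at the origin, the maximization over $S\in\mathcal{C}$ reduces to an optimization over the free parameter $w := \varphi''(0) \in T^{1,0}_z M$.

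The first step is to expand $\partial_\zeta\partial_{\bar\zeta}\log g|_{\zeta=0}$ using the chain rule, obtaining a function $P(w,\bar w)$ that is (inhomogeneous) Hermitian-quadratic in $w$. A direct calculation gives the mixed $w$-Hessian
\[
\partial_{w^\alpha}\partial_{\bar w^\beta} P = \frac{1}{G^2}\bigl[G\, G_{\alpha\bar\beta} - G_\alpha G_{\bar\beta}\bigr]\!(z,v),
\]
which is the angular Hermitian tensor. Strong pseudoconvexity of $G$ combined with the homogeneity identity $v^\alpha G_{\alpha\bar\beta} = G_{\bar\beta}$ shows this tensor is positive semi-definite with kernel spanned exactly by the complex radial direction $v$. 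Therefore $P$ is a convex function of $w$, attaining its infimum on its critical set, and it is precisely on this critical set that $K(S)(z)$ is maximized.

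Next I solve the critical equation $\partial_{\bar w^\beta} P = 0$. Using $v^\alpha G_{\alpha\bar\beta} = G_{\bar\beta}$ and the identity $G_\alpha\, \Gamma^\alpha_{;\mu} = G_{;\mu}$ (which follows from $G_\alpha G^{\alpha\bar\gamma} = \bar v^{\gamma}$ plus the homogeneity of $G_{;\mu}$), I verify that
\[
w^{*\alpha} := -\Gamma^\alpha_{;\mu}(z,v)\, v^\mu
\]
is a critical point; geometrically this is exactly the condition that the lift $\zeta \mapsto (\varphi(\zeta),\varphi'(\zeta))$ be horizontal at $\zeta=0$ for the Chern--Finsler connection. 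A holomorphic curve realizing this $2$-jet exists in local coordinates, for instance $\varphi(\zeta) = z + v\zeta + \tfrac{1}{2}w^*\zeta^2$, so the maximum over $\mathcal{C}$ is indeed attained.

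Finally, at $w=w^*$ the same identities yield $\partial_\zeta g|_0 = \partial_{\bar\zeta} g|_0 = 0$, so
\[
\partial_\zeta\partial_{\bar\zeta}\log g\,|_{0, w=w^*} = \frac{1}{G(z,v)}\,\partial_\zeta\partial_{\bar\zeta} g\,|_{0, w=w^*}.
\]
Substituting $w^{*\alpha} = -\Gamma^\alpha_{;\mu}v^\mu$ into the expansion of $\partial_\zeta\partial_{\bar\zeta} g|_0$ and comparing with the explicit formula
\[
R^\alpha_{\beta;\mu\bar\nu} = -\delta_{\bar\nu}(\Gamma^\alpha_{\beta;\mu}) - \Gamma^\alpha_{\beta\sigma}\,\delta_{\bar\nu}(\Gamma^\sigma_{;\mu})
\]
recalled above for the Chern--Finsler curvature, the result matches $\tfrac{1}{G(z,v)}\langle \Omega(\chi,\bar\chi)\chi,\chi\rangle_v$, giving
\[
-\frac{2}{g(0)}\,\partial_\zeta\partial_{\bar\zeta}\log g\,|_{0, w=w^*} = \frac{2}{G(z,v)^2}\langle \Omega(\chi,\bar\chi)\chi,\chi\rangle_v = K_G(v).
\]
Combined with convexity of $P$ this yields $K(S)(z) \le K_G(v)$ for every $S\in\mathcal{C}$, with equality at the curve whose $2$-jet is horizontal. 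The main obstacle is this last identification: rewriting $P(w^*,\bar w^*)$ as the curvature pairing requires patient index manipulation using the inverse relations for $(G^{\alpha\bar\beta})$ and repeated applications of Euler's homogeneity identities for $G$, $G_\alpha$, $G_{;\mu}$, and $G_{\alpha;\bar\nu}$.
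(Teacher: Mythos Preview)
The paper does not supply its own proof of this lemma: it is quoted as a known result from Abate--Patrizio \cite{abate} and Wong--Wu \cite{wong}, with no argument given. So there is nothing in the paper to compare your proposal against line by line.

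That said, your proposal is precisely the standard proof that appears in those references. The reduction to a $2$-jet problem, the identification of the angular tensor $(G\,G_{\alpha\bar\beta}-G_\alpha G_{\bar\beta})/G^2$ as the $w$-Hessian of $P$, the observation that its kernel is the radial line (via Cauchy--Schwarz for the positive-definite form $G_{\alpha\bar\beta}$ together with $v^\alpha G_{\alpha\bar\beta}=G_{\bar\beta}$), the critical choice $w^{*\alpha}=-\Gamma^\alpha_{;\mu}v^\mu$ encoding horizontality of the lifted curve, and the final identification with $\langle\Omega(\chi,\bar\chi)\chi,\chi\rangle_v$ are all correct and match the argument in Abate--Patrizio (their Corollary~2.5.4 and the surrounding discussion). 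One small remark: you silently use strong pseudoconvexity of $G$, which is not in the hypothesis of the lemma as stated; this is harmless here because the paper has already declared, just before Definition~2.8, that only strongly pseudoconvex complex Finsler metrics are considered, so the Hermitian form $(G_{\alpha\bar\beta})$ is indeed positive definite and the Cauchy--Schwarz step goes through.
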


\section{The estimation of the distance function on real Finsler manifolds}
\setcounter{equation}{0}
\noindent

In this section, we follow the notations in \cite{abate}, \cite{shen2}. We first introduce the definition of Hessian in real Finsler geometry. Then we obtain an estimation of the distance function $\rho(x)$ of a real Finsler metric $G$ and an equality which establishes a relationship between the Hessian of the distance function associated to $G$ and the Morse index form on a real Finsler manifold $(M,G)$, see Proposition \ref{W}. Basing on this, we obtain an inequality which relates the real Hessian of the distance function $\rho(x)$ and the radial flag curvature of the real Finsler metric $G$, see Theorem \ref{a}.

Now, we introduce the definition of Hessian in real Finsler geometry. The main ideas come from Shen (see \cite{shen2}) and the corresponding definition in Riemannian case. Let $(M,G)$ be a real Finsler manifold and let $f$ be a smooth function on $M$. The Legendre transform $\ell: TM \rightarrow T^*M$ is defined by
\begin{equation*}
\ell(Y)=
\begin{cases}
\langle Y^H|\,{\cdot}^H\rangle_Y,&  \text{if} \quad Y \neq 0;\\
0, & \text{if} \quad Y = 0.
\end{cases}
\end{equation*}
Then the gradient of $f$ is defined by $\hat{\nabla} f=\ell^{-1}(df)$ (see \cite{shen2}). Therefore we have
\begin{equation*}
df(X)=\langle (\hat{\nabla} f)^H| X^H\rangle_{\hat{\nabla} f}.
\end{equation*}
By the properties of the Legendre transform $\ell$, it is easy to check that
$$\hat{\nabla} \rho^2=\ell^{-1}(d\rho^2)=\ell^{-1}(2\rho d\rho)=2\rho \ell^{-1}(d\rho)=2\rho \hat{\nabla} \rho.$$
In local coordinates, $\hat{\nabla} f$ can be expressed by
\begin{equation*}
\hat{\nabla} f = g^{ij}(\hat{\nabla} f) \frac{\partial f}{ \partial x^j}\frac{\partial }{\partial x^i}.
\end{equation*}
Then, in $M_f:=\{x \in M | df(x) \neq 0\}$, the Hessian of $f$ is defined as (see \cite{wub}):
\begin{equation*}
H(f)(X,Y)=D^2f(X,Y)=X(Yf)-(\nabla_{X^H}Y^H) f|_{\hat{\nabla}f}, \forall X,Y \in TM|_{M_f},
\end{equation*}
where $D$ is the Cartan connection of $G$.
It is easy to see that $H(f)$ is symmetric, and it can be written as
\begin{equation*}
H(f)(X,Y)=\langle \nabla_{X^H}(\hat{\nabla} f)^H| Y^H\rangle|_{\hat{\nabla} f}.
\end{equation*}
\begin{definition}(\cite{abate},\cite{li})
$(M,G)$ is called a real Finsler manifold with a pole $p$ if the exponential map $\exp_p:T_pM \rightarrow M$ is an $E$-diffeomorphism at $p$.
 \end{definition}
 \begin{remark}
 If $(M,G)$ is a real Berwald manifold, we know that $\exp_p:T_pM \rightarrow M$ is a smooth map (see \cite{bao}). Therefore we say $(M,G)$ is a real Berwald manifold with a pole $p$ if the exponential map $\exp_p:T_pM \rightarrow M$ is an diffeomorphism at $p$.
\end{remark}
Given a real Finsler manifold $M$ with a  pole $p$, the radial vector field is the unit vector field $T$ defined on $M-\{p\}$, such that for any $x\in M-\{p\}$, $T(x)$ is the unit vector tangent to the unique geodesic joining $p$ to $x$ and pointing away from $p$. A plane $\pi$ in $T_xM$ is called a radial plane if $\pi$ contains $T(x)$. By the radial flag curvature of a real Finsler manifold $(M,G)$, we mean the restriction of the flag curvature function to all the radial planes, we refer to \cite{li} for more details.
Note that if $M$ possess a pole $p$, then it is complete. In this case, we denote the distance function from $p$ to $x$ by $\rho(x)$. By Proposition 6.4.2 in \cite{bao}, we know that that $\rho^2(x)$ is smooth on $M-\{p\}$. The following proposition was actually outlined and used in Li and Qiu \cite{li}. We single it out and give a brief proof here since we need it to prove Theorem \ref{a}.
\begin{proposition}\label{W}
Let $(M,G)$ be a real Finsler manifold with a pole $p$. Let $\gamma:[0,r] \rightarrow M$ be a normal geodesic with $\gamma(0)=p$ such that $\gamma(r)=x(\neq p)$. Then
\begin{equation*}
H(\rho)(X,X)= \int_0^r[\langle\nabla_{T^H}J^H|\nabla_{T^H}J^H\rangle-\langle\Omega(T^H,J^H)J^H|T^H\rangle]_Tdt=I(J,J),
\end{equation*}
where $J$ is a Jacobi field along the geodesic $\gamma$ such that $J(0)=0$, $J(r)=X$, $T=\dot{\gamma}$.
\end{proposition}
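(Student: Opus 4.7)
The plan is to construct a single variation $\Sigma$ of $\gamma$ for which $l_\Sigma''(0)$ can be evaluated two different ways: via the defining formula for the Hessian, and via the second variation formula for arclength (Theorem 2.1). Let $c:(-\varepsilon,\varepsilon)\to M$ be the $G$-geodesic with $c(0)=x$ and $\dot c(0)=X$. Since $p$ is a pole, for every sufficiently small $s$ there is a unique (and automatically minimizing) geodesic from $p$ to $c(s)$; collecting these into a smooth family and reparametrizing each linearly over $[0,r]$ produces a regular variation $\Sigma:(-\varepsilon,\varepsilon)\times[0,r]\to M$ of $\gamma$ with $\Sigma(s,0)=p$ and $\Sigma(s,r)=c(s)$. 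Minimality yields $l_\Sigma(s)=L(\gamma_s)=\rho(c(s))$ where $\gamma_s:=\Sigma(s,\cdot)$. I tacitly take $X$ to be $g_T$-orthogonal to $T(x)$, so that $J\in\mathcal{X}[0,r]$ and the Morse index form of Definition 2.3 is well defined on it; this is natural here because the radial part of $H(\rho)$ vanishes identically for the distance from a pole, so nothing is lost.

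Since $c$ is a geodesic, $\nabla_{\dot c}\dot c(0)=0$, and the defining formula for the Hessian collapses to
\[
H(\rho)(X,X)=\frac{d^2}{ds^2}\rho(c(s))\Big|_{s=0}=l_\Sigma''(0).
\]
On the variational side, the transversal field $U:=\Sigma_{*}(\partial/\partial s)|_{s=0}$ is a Jacobi field along $\gamma$, being the variation field of a smooth family of geodesics; its boundary values $U(0)=0$ and $U(r)=X$ force $U=J$ by uniqueness of the Jacobi boundary-value problem (there are no conjugate points along $\gamma$ because $p$ is a pole). Applying Theorem 2.1 to $l_\Sigma$, the boundary contribution $\langle\nabla_{U^H}U^H|T^H\rangle_{\dot\gamma}\big|_0^r$ vanishes: at $t=0$ because $U(0)=0$, and at $t=r$ because $\nabla_{U^H}U^H|_{t=r}$ is the horizontal lift of $\nabla_{\dot c}\dot c(0)=0$. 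Moreover, by the Remark following Theorem 2.1 the inner product $\langle U^H|T^H\rangle_{\dot\gamma}$ is constant in $t$, and $U(0)=0$ then forces it to vanish identically, removing the $|\partial_t\langle U^H|T^H\rangle|^2$ integrand. What survives is
\[
l_\Sigma''(0)=\int_0^r\bigl[\|\nabla_{T^H}J^H\|^2-\langle\Omega(T^H,J^H)J^H|T^H\rangle\bigr]_T\,dt,
\]
which is exactly $I(J,J)$ by Definition 2.3. Matching the two evaluations of $l_\Sigma''(0)$ proves the chain of equalities.

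The most delicate step I expect is the vanishing of the boundary piece at $t=r$. In the Finsler setting the horizontal lift $\nabla_{U^H}U^H|_{t=r}$ depends on the reference vector $\dot\gamma(r)=T(x)\in\tilde M$, and one must verify that the geodesic condition on $c$, read off in a coordinate neighborhood of $\gamma$ through the Cartan-connection coefficients along the $t=r$ slice of $\Sigma$, really does force this horizontal lift to vanish at $s=0$. I would carry this out by localising to a chart and computing the relevant Cartan Christoffel-type symbols along the endpoint curve, invoking the same linear-parallelism property that underlies the Remark after Theorem 2.1 (equating Cartan and Berwald parallel transports in the real Finsler setting) to identify the two covariant derivatives.
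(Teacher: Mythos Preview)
Your overall strategy --- build a geodesic variation through $p$ with endpoint curve $c$, identify its transversal field with the Jacobi field $J$, and compute $l_\Sigma''(0)$ two ways --- is exactly the paper's. The gap is in how you dispose of the connection term in the Hessian and the boundary term in the second variation formula.

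You argue that both vanish because $c$ is a geodesic, so $\nabla_{\dot c}\dot c(0)=0$. But in the Finsler setting the Cartan covariant derivative depends on the reference point in $\tilde M$: the geodesic equation for $c$ says $\nabla_{\dot c^H}\dot c^H=0$ at the reference vector $\dot c(0)=X$, whereas both the Hessian term $(\nabla_{X^H}X^H)\rho|_{\hat\nabla\rho}$ and the boundary term $\langle\nabla_{U^H}U^H|T^H\rangle_{\dot\gamma_0}|_{t=r}$ are evaluated at the reference vector $\hat\nabla\rho(x)=T(x)$. These are different fibres of $\tilde M$, and the geodesic condition on $c$ gives you no control over $\nabla_{X^H}X^H$ at reference $T$. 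Your proposed fix via the linear-parallelism remark does not bridge this: that remark compares Cartan and Berwald transport along a curve at a \emph{fixed} reference direction, not the same connection at two different reference directions.

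The paper sidesteps this entirely by not trying to kill the two terms separately. It simply writes
\[
H(\rho)(X,X)=\frac{d^2 l_\Sigma}{ds^2}(0)-\langle\nabla_{\dot\zeta^H}\dot\zeta^H\,|\,(\hat\nabla\rho)^H\rangle_{\hat\nabla\rho(x)}
\]
and observes that the subtracted term is \emph{identical} (same reference vector $T=\hat\nabla\rho$, same vector $X=\dot\zeta(0)=U(r)$) to the boundary contribution $\langle\nabla_{U^H}U^H|T^H\rangle_{\dot\gamma_0}|_{t=r}$ coming out of Theorem~2.1, so the two cancel and leave $I(J,J)$. No appeal to the geodesic property of the endpoint curve is needed for this cancellation; the choice of $\zeta$ as a geodesic in the paper is incidental. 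Once you see this, your last paragraph's worry evaporates.
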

\begin{proof}
Let $T(x)$ be the unit vector tangent to the unique geodesic joint $p$ to $x$ and pointing away from $p$. Consider the local $g_T$-orthogonal decomposition near $x$ (see pp. 186 in \cite{wub}),
\begin{equation*}
T_xM=\text{span}\{ T(x)\oplus T^{\bot}(x)\},
\end{equation*}
where
 $$T^{\bot}(x)=\{X\in T_xM|\langle T^H(x)|X^H\rangle=0\}.$$
We assert that there are also orthogonal decompositions relative to $H(\rho)$, in the sense that
$$H(\rho)(T(x),T^{\bot}(x))=0.$$
To show this, let $X \in T^{\bot}(x)$, then
$$H(\rho)(T(x),X)=H(\rho)(X,T(x))=XT(\rho)(x)-\nabla_{X^H}T^H \rho|_{\hat{\nabla} \rho}(x).$$
Since $\hat{\nabla} \rho =T$ and $T(\rho)=F(\hat{\nabla} \rho)=1$, this implies
\begin{eqnarray*}
H(\rho)(T(x),X)&=&-\nabla_{X^H}T^H\rho|_{\hat{\nabla} \rho}(x)\\
&=&-\langle(\hat{\nabla} \rho)^H, \nabla_{X^H}T^H\rangle_{\hat{\nabla} \rho}\\
&=&-\langle T^H,\nabla_{X^H}T^H\rangle_{\hat{\nabla}\rho}\\
&=&-\frac{1}{2}X^H\langle T^H|T^H\rangle\\
&=&0.
\end{eqnarray*}

By the definition of Hessian, it is clear that
\begin{equation*}
H(\rho)(T(x),T(x))=0.
\end{equation*}

Let $X\in T^{\bot}(x)$ and $\zeta:(-\varepsilon,\varepsilon) \rightarrow M$ be a normal geodesics such that $\zeta(0)=x$ and $\dot{\zeta}(0)=X$. Let $\gamma_s:[0,r] \rightarrow M$ be a variation of $\gamma$ such that $\gamma_s$ is the unique geodesic joining $p$ to $\zeta(s)$. Note that:

(i) the transversal vector field $J=\frac{d}{ds}(\gamma_s(t))|_{s=0}$ of $\gamma_s$ along $\gamma$ is a Jacobi field;

(ii) $J(0)=0$ and $J(r)=X$.

(iii) $\langle J^H|\dot{\gamma}^H\rangle_{\dot{\gamma}}=0$ (see Corollary 1.7.5 in \cite{abate}).

Therefore by Definition \ref{I} and Theorem \ref{l}, we have
\begin{equation*}\begin{split}
H(\rho)(X,X)&=X(X(\rho))-\nabla_{X^H}X^H(\rho)|_{\hat{\nabla} \rho(x)}\\
&=X(\dot{\zeta}(\rho))|_{s=0}-\nabla_{X^H}{\dot{\zeta}^H}(\rho)\Big|_{\hat{\nabla} \rho(x)}\\
&=\dot{\zeta}(\dot{\zeta}(\rho))|_{s=0}-\langle\nabla_{\dot{\zeta}^H}\dot{\zeta}^H|(\hat{\nabla}\rho)^H\rangle_{\hat{\nabla}\rho(x)}\\
&=\frac{d^2l_{\Sigma}}{ds^2}(0)-\langle\nabla_{\dot{\zeta}^H}{\dot{\zeta}^H}|(\hat{\nabla} \rho)^H\rangle_{\hat{\nabla} \rho(x)}\\
&=I(J,J).
\end{split}\end{equation*}
\end{proof}
\begin{remark}
 If $(M,G)$ is a complete real Finsler manifold and there are no cut points on $M$, then the assumption of a pole $p$ is not need.
\end{remark}
\begin{proposition}(\cite{abate})\label{c}
Let $\gamma:[0,r] \rightarrow M$ be a normal geodesic on a Finsler manifold $(M,G)$ which contains no conjugate points. Let $\eta \in \mathcal{X}[0,r]$, and let $J$ be a Jacobi field along $\gamma$ such that $J(0)=\eta(0)$ and $J(r)=\eta(r)$. Then
\begin{equation*}
I(J,J) \leq I(\eta,\eta),
\end{equation*}
with equality holds iff $\eta \equiv J$.
\end{proposition}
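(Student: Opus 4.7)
The plan is to reduce the desired inequality to two fundamental properties of the Morse index form: that any proper Jacobi field is $I$-orthogonal to $\mathcal{X}_0[0,r]$, and that $I$ is positive definite on $\mathcal{X}_0[0,r]$ in the absence of conjugate points. Set $\xi := \eta - J$. Then $\xi(0) = \xi(r) = 0$. Moreover $J$ itself lies in $\mathcal{X}[0,r]$: the function $\langle J^H|T^H\rangle_T$ along the normal geodesic $\gamma$ satisfies a second-order linear ODE whose right-hand side involves $\langle \Omega(T^H, J^H)T^H | T^H\rangle_T = 0$ (antisymmetry), so $\langle J^H | T^H\rangle_T$ is affine in $t$, and it vanishes at $t=0,r$ because $\eta(0), \eta(r) \perp T^H$. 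Hence $\xi \in \mathcal{X}_0[0,r]$, and bilinearity of $I$ gives
\begin{equation*}
I(\eta,\eta) = I(J,J) + 2I(J,\xi) + I(\xi,\xi).
\end{equation*}

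To prove $I(J,\xi) = 0$, I would integrate by parts. Since parallel transport associated to the Cartan connection preserves $\langle\cdot|\cdot\rangle_T$ along $\gamma$,
\begin{equation*}
\int_0^r \langle \nabla_{T^H} J^H | \nabla_{T^H} \xi^H \rangle_T \, dt = \big[\langle \nabla_{T^H} J^H | \xi^H\rangle_T\big]_0^r - \int_0^r \langle \nabla_{T^H}\nabla_{T^H} J^H | \xi^H \rangle_T \, dt,
\end{equation*}
and the boundary term vanishes because $\xi$ vanishes at the endpoints. Substituting the Jacobi equation $\nabla_{T^H}\nabla_{T^H} J^H = \Omega(T^H, J^H)T^H$, and then using the antisymmetry $\langle \Omega(T^H, J^H) T^H | \xi^H \rangle_T = -\langle \Omega(T^H, J^H) \xi^H | T^H \rangle_T$ of the radial curvature operator, the two terms in the definition of $I(J,\xi)$ cancel.

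The heart of the proof, and the main obstacle, is showing $I(\xi,\xi) \geq 0$ on $\mathcal{X}_0[0,r]$ with equality only when $\xi \equiv 0$. Using the absence of conjugate points, I would pick $n-1$ proper Jacobi fields $J_1,\ldots,J_{n-1}$ along $\gamma$ with $J_i(0)=0$ and $\nabla_{T^H}J_i^H(0)$ linearly independent; these then remain linearly independent and span $(T^H)^\perp$ on $(0,r]$. Write $\xi = \sum_i f_i J_i$ on $(0,r]$ with piecewise smooth coefficients $f_i$. A computation using the Jacobi equation together with integration by parts collapses all cross-terms and gives
\begin{equation*}
I(\xi,\xi) = \int_0^r \Big\| \sum_i f_i'(t)\, J_i^H \Big\|_T^2 \, dt \geq 0,
\end{equation*}
with equality forcing $\sum_i f_i'(t)J_i^H \equiv 0$ and thus, by linear independence, $f_i' \equiv 0$. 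Combined with $f_i(r) = 0$ coming from $\xi(r)=0$, we conclude $\xi \equiv 0$, that is $\eta \equiv J$. The delicate points are controlling the behaviour of the coefficients $f_i$ as $t \to 0^+$, where the $J_i$ all vanish simultaneously, and verifying the perfect-square form of $I(\xi,\xi)$ in the Finsler setting; both hinge on the curvature symmetries of the Cartan connection and on the metric-compatibility of parallel transport along the normal geodesic $\gamma$.
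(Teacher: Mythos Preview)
The paper does not give its own proof of this proposition; it is simply quoted from Abate--Patrizio \cite{abate} and used as a black box. Your outline is precisely the classical index-lemma argument (decompose $\eta=J+\xi$, show $I(J,\xi)=0$ by integration by parts plus the Jacobi equation, then prove $I$ is positive definite on $\mathcal{X}_0[0,r]$ via a Jacobi-field basis), and this is exactly the route taken in \cite{abate}. So there is nothing to compare: your approach coincides with the cited source.

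Two small remarks on the write-up. First, in the cancellation for $I(J,\xi)$ you need the skew-symmetry $\langle \Omega(T^H,J^H)T^H\,|\,\xi^H\rangle_T=-\langle \Omega(T^H,J^H)\xi^H\,|\,T^H\rangle_T$; this is a consequence of metric-compatibility of the Cartan connection on $\mathcal{V}$ (hence on $\mathcal{H}$ via $\Theta$), not a general ``antisymmetry of the radial curvature operator'', so phrase it that way. Second, the perfect-square formula for $I(\xi,\xi)$ also requires the Lagrange-type identity $\langle \nabla_{T^H}J_i^H\,|\,J_k^H\rangle_T=\langle J_i^H\,|\,\nabla_{T^H}J_k^H\rangle_T$ for Jacobi fields vanishing at $0$; in the Finsler setting this uses, beyond metric-compatibility, the symmetry $\langle \Omega(T^H,X^H)T^H\,|\,Y^H\rangle_T=\langle \Omega(T^H,Y^H)T^H\,|\,X^H\rangle_T$ of the flag curvature, which does hold for the Cartan connection but is worth stating explicitly.
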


The following theorem gives an estimation of the Hessian of the distance function $\rho(x)$ in terms of the upper bound of the radial flag curvature of a real Finsler manifold $(M,G)$.
\begin{theorem}\label{a}
Suppose that  $(M,G)$ is a real Finsler manifold with a pole $p$ such that its radial flag curvature is bounded from below by a negative constant $-K^2$. Suppose that $\gamma:[0,r] \rightarrow M$ is a normal geodesic with $\gamma(0)=p$ such that $\gamma(r)=x(\neq p)$. Then
\begin{equation*}
H(\rho)(u,u)(x) \leq \frac{1}{\rho}+K,
\end{equation*}
where $u=u^i\frac{\partial}{\partial x^i} \in T_xM$ is a unit vector.
\end{theorem}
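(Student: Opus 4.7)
The plan is to identify $H(\rho)(u,u)(x)$ with the Morse index form of a Jacobi field via Proposition \ref{W}, dominate that by the index form of an explicit comparison vector field via Proposition \ref{c}, and then estimate the comparison integral using the lower bound $-K^2$ on the radial flag curvature. The pole hypothesis ensures $\gamma$ contains no conjugate points, so Proposition \ref{c} applies.

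First I would decompose $u$ via the $g_T$-orthogonal splitting $T_xM = \mathrm{span}\{T(x)\}\oplus T^{\perp}(x)$ used in the proof of Proposition \ref{W}. Writing $u = aT(x) + X$ with $X \in T^{\perp}(x)$, the vanishing relations $H(\rho)(T,T)=0$ and $H(\rho)(T, X)=0$ proved there (using $T = \hat{\nabla}\rho$ and $\nabla_{T^H} T^H = 0$) reduce matters to $H(\rho)(u,u) = H(\rho)(X,X)$, and the unit condition on $u$ combined with this splitting forces $\|X\|_T \leq 1$. By Proposition \ref{W} this equals $I(J,J)$, where $J$ is the Jacobi field along $\gamma$ with $J(0) = 0$ and $J(r) = X$; note $J \in \mathcal{X}[0,r]$ since $\langle J^H|T^H\rangle_T \equiv 0$.

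Next I would introduce the standard hyperbolic comparison field $\eta(t) = \frac{\sinh(Kt)}{\sinh(Kr)}E(t)$, where $E(t)$ is the Cartan-parallel transport of $X$ along $\gamma$. This choice ensures $\|E(t)\|$ is constant, $E(t) \perp T(t)$ for all $t$, $\eta(0) = 0$, and $\eta(r) = X$, so $\eta \in \mathcal{X}[0,r]$ and Proposition \ref{c} gives $I(J,J) \leq I(\eta,\eta)$. A direct computation produces $\|\nabla_{T^H}\eta^H\|^2 = \frac{K^2\cosh^2(Kt)}{\sinh^2(Kr)}\|X\|^2$, and the flag curvature bound on the plane $\mathrm{span}\{T, \eta\}$ together with $G(T) = 1$ and $E \perp T$ yields
\begin{equation*}
\langle\Omega(T^H,\eta^H)\eta^H|T^H\rangle_T \geq -\frac{K^2\sinh^2(Kt)}{\sinh^2(Kr)}\|X\|^2.
\end{equation*}
Plugging into the definition of $I(\eta,\eta)$ and using $\cosh^2(Kt) + \sinh^2(Kt) = \cosh(2Kt)$ collapses the integral to $K\|X\|^2\coth(Kr)$.

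Combining the above with the elementary inequality $\coth(x) \leq 1 + \frac{1}{x}$ for $x > 0$ (equivalent to $e^{2x} - 1 \geq 2x$) and $\|X\|^2 \leq 1$ gives $H(\rho)(u,u)(x) \leq K\coth(K\rho) \leq \frac{1}{\rho} + K$, as desired. The main obstacle I anticipate is the Finsler bookkeeping around parallel transport: verifying that Cartan-parallel transport along the normal geodesic $\gamma$ preserves both $\|E\|$ and the orthogonality $g_{\dot\gamma}(E,\dot\gamma) \equiv 0$, so that $\eta$ genuinely lies in $\mathcal{X}[0,r]$ and the radial flag curvature hypothesis applies to the correct flag. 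A secondary nuisance is making precise the sense of ``unit vector $u$'' consistent with $\|X\|_T \leq 1$, which uses that $H(\rho)$ is evaluated through the fundamental tensor at $\hat{\nabla}\rho = T$.
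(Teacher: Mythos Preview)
Your proposal is correct and follows the same overall architecture as the paper---identify $H(\rho)(u,u)$ with a Morse index form via Proposition \ref{W}, then dominate by the index form of an explicit test field via Proposition \ref{c}, then invoke the radial flag curvature bound---but the choice of comparison field is genuinely different. The paper parallel-transports $u$ to a field $\eta(t)$ and takes the test field $\xi(t)=(t/r)^{\alpha}\eta(t)$ with a free exponent $\alpha>1$; after inserting the curvature bound this yields
\[
I(J,J)\leq \frac{1}{r}+\frac{(\alpha-1)^2}{(2\alpha-1)r}+\frac{K^2 r}{2\alpha+1},
\]
and an optimization in $\alpha$ collapses the last two terms to at most $K$. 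Your hyperbolic choice $\eta(t)=\dfrac{\sinh(Kt)}{\sinh(Kr)}E(t)$ is the standard Rauch-type comparison coming from the constant-curvature model and gives the sharper intermediate estimate $H(\rho)(u,u)\leq K\coth(K\rho)$, from which $\frac{1}{\rho}+K$ follows by $\coth x\leq 1+\tfrac{1}{x}$. What your route buys is a cleaner computation and an actually tighter bound before the final elementary inequality; what the paper's route buys is avoidance of hyperbolic functions and a purely algebraic optimization. Your explicit handling of the splitting $u=aT(x)+X$ (so that $H(\rho)(u,u)=H(\rho)(X,X)$ with $\|X\|_T\le 1$) is also a bit more careful than the paper, which applies Proposition \ref{W} directly to $u$; both approaches require that ``unit'' be read as $g_{\hat\nabla\rho}(u,u)=1$, as you correctly flag. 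The Finsler parallel-transport facts you list as potential obstacles (constancy of $\|E\|_T$ and of $g_{\dot\gamma}(E,\dot\gamma)$) are exactly those already used in the paper (cf.\ the remark after Theorem \ref{l}), so they present no real difficulty.
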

\begin{proof}
Firstly, let $u \in T_xM$ be a unit vector.
By Proposition \ref{W}, we have
\begin{equation}\label{A}
H(\rho)(u,u)= \int_0^r[\langle\nabla_{T^H}J^H|\nabla_{T^H}J^H\rangle_T-\langle\Omega(T^H,J^H)J^H|T^H\rangle_T]dt=I(J,J),
\end{equation}
where $J$ is a Jacobi field along the geodesic $\gamma$ such that $J(0)=0$ and $J(r)=u$.

 Let $\eta(t)$ be a unit vector field along $\gamma$  such that $\eta(r)=u$ and $\nabla_{T^H}(\eta(t))^H=0$. Set $\xi(t)=(\frac{t}{r})^{\alpha}\eta(t)$ and $\alpha >1$, then it is clear that $\xi(0)=J(0)=0$ and $\xi(r)=J(r)=u$.
By Proposition \ref{c} and Definition \ref{I}, we have
\begin{equation}\begin{split}\label{z}
I(J,J) &\leq \int_0^r[\langle\nabla_{T^H}\xi^H|\nabla_{T^H}\xi^H\rangle_T-\langle\Omega(T^H,\xi^H)\xi^H|T^H\rangle_T]dt\\
& \leq \int_0^r\Big[\Big\langle{\alpha}\Big(\frac{t}{r}\Big)^{\alpha-1}\xi^H\Big|{\alpha}\Big(\frac{t}{r}\Big)^{\alpha-1}\xi^H\Big\rangle_T +K^2 \langle T^H| T^H\rangle_T\langle \xi^H|\xi^H\rangle_T\Big]dt\\
&= \int_0^r\Big[\alpha^2\Big(\frac{t}{r}\Big)^{2(\alpha-1)}+K^2\Big(\frac{t}{r}\Big)^{2\alpha}\Big]dt\\
& \leq \frac{1}{r}+\frac{(\alpha-1)^2}{2\alpha-1}\frac{1}{r}+\frac{K^2r}{(2\alpha+1)},
\end{split}\end{equation}
where in the second inequality we used the condition that $-K^2$ is the lower bound of the radial flag curvature.

We can take a suitable $\alpha$, such that
\begin{equation*}
\frac{(\alpha-1)^2}{2\alpha-1}\frac{1}{r}=\frac{K^2r}{(2\alpha+1)}.
\end{equation*}
Therefore, we obtain
\begin{equation}\label{z-1}
\frac{(\alpha-1)^2}{2\alpha-1}\frac{1}{r}+\frac{K^2r}{(2\alpha+1)} =2 \sqrt{\frac{K^2(\alpha-1)^2}{(4\alpha^2-1)}}= \sqrt{\frac{K^2(4\alpha^2-8\alpha+4)}{(4\alpha^2-1)}} \leq K,
\end{equation}
where in the last step we used the fact that $ \alpha>1$.\\
By$(\ref{z})$, $(\ref{z-1})$ and the equality $\rho=r$, we get
\begin{equation}\label{EQ-3.4}
I(J,J) \leq \frac{1}{\rho}+K\Big.
\end{equation}
Plugging \eqref{EQ-3.4} into \eqref{A}, we obtain
$$H(\rho)(u,u)(x) \leq \frac{1}{\rho}+K.$$
This completes the proof of Theorem \ref{a}.
\end{proof}

As a simple application of Theorem \ref{a}, we obtain the following result.
\begin{corollary}\label{C-3.1}
Suppose that $(M,G)$ is a real Finsler manifold with a pole $p$ such that its radial flag curvature is bounded from below by a negative constant $-K^2$. Suppose that $\gamma: [0,r] \rightarrow M$ is a normal geodesic with $\gamma(0)=p$ such that $\gamma(r)=x \neq p$. Then with respect to the normal coordinates at the point $x$,
\begin{equation*}
\frac{\partial ^2 \rho}{\partial x^i \partial x^j}(x) \leq \Big(\frac{1}{\rho}+K\Big)\delta_{ij}
\end{equation*}
where $\delta_{ij}$ is the Kronecker symbol.
\end{corollary}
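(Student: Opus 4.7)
The plan is to translate the coordinate-free bound of Theorem \ref{a}, namely $H(\rho)(u,u)(x) \leq \frac{1}{\rho}+K$ for Finsler-unit $u\in T_xM$, into the matrix inequality on the partial derivatives $\frac{\partial^2\rho}{\partial x^i\partial x^j}(x) \leq \left(\frac{1}{\rho}+K\right)\delta_{ij}$. The strategy is a direct local-coordinate unfolding that exploits the defining properties of Finsler normal coordinates centered at $x$.

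First I would expand the intrinsic Hessian in coordinates. Using the formula
\[
H(\rho)(U,U) = U(U\rho) - (\nabla_{U^H}U^H)\rho\,\big|_{\hat\nabla\rho},
\]
and taking $U=u^i\partial_i$ with constant coefficients in a chart around $x$, one obtains
\[
H(\rho)(U,U)(x) = u^i u^j\frac{\partial^2\rho}{\partial x^i\partial x^j}(x) \;-\; u^iu^j\bigl[\nabla_{\partial_i^H}\partial_j^H\bigr]\rho\,\big|_{\hat\nabla\rho(x)},
\]
so that the coordinate second derivatives differ from the intrinsic Hessian only by a correction involving the Cartan connection coefficients along the radial direction $\hat\nabla\rho(x)=\dot\gamma(r)$.

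Next I would invoke two characteristic features of Finsler normal coordinates at $x$: (i) geodesics issuing from $x$ are straight lines, which forces the spray coefficients $G^k$ to vanish at $x$ and thereby kills the nonlinear connection contribution to $\nabla_{\partial_i^H}\partial_j^H$ at $(x,\hat\nabla\rho(x))$; and (ii) the coordinate frame $\{\partial_i\}$ is orthonormal at $x$ with respect to the fundamental tensor evaluated in the radial direction, i.e., $g_{ij}(x,\hat\nabla\rho(x))=\delta_{ij}$. Together these imply that the correction term drops out, so $H(\rho)(u,u)(x)=u^iu^j\frac{\partial^2\rho}{\partial x^i\partial x^j}(x)$, and that a vector is Finsler-unit in the radial direction precisely when $\delta_{ij}u^iu^j=1$.

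The conclusion then follows by homogeneity: given any nonzero $u\in T_xM$, set $\hat u=u/|u|$ with $|u|^2=\delta_{ij}u^iu^j$; applying Theorem \ref{a} to $\hat u$ and using bilinearity of $H(\rho)$ gives
\[
u^iu^j\frac{\partial^2\rho}{\partial x^i\partial x^j}(x) = |u|^2\,H(\rho)(\hat u,\hat u)(x) \leq |u|^2\left(\frac{1}{\rho}+K\right) = \left(\frac{1}{\rho}+K\right)\delta_{ij}u^iu^j,
\]
which is the asserted matrix inequality. The main obstacle I foresee is the careful justification of the vanishing of the connection-correction term in the middle step: unlike the Riemannian case, the Cartan connection coefficients $\Gamma^k_{i;j}(x,v)$ depend nontrivially on the fiber argument $v\in T_xM$, so one must verify that evaluating at the specific radial direction $v=\hat\nabla\rho(x)$ together with the normal-coordinate identities $G^k(x,v)=0$ is enough to eliminate the contribution of $u^iu^j\Gamma^k_{i;j}(x,\hat\nabla\rho(x))\frac{\partial\rho}{\partial x^k}(x)$ for arbitrary (not just radial) $u$; this is where the precise statement of the Finsler normal-coordinate lemma enters.
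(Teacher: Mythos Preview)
Your approach is correct and coincides with the paper's: expand the intrinsic Hessian in coordinates, kill the connection correction in suitable ``normal coordinates,'' and read off the matrix inequality from Theorem~\ref{a}. The only point where you diverge is the meaning of ``normal coordinates.'' You reach for exp-based geodesic coordinates and then (rightly) worry whether the vanishing of the spray $G^k(x,\cdot)$ forces $\Gamma^k_{i;j}(x,\hat\nabla\rho(x))=0$; in general it does not, because the horizontal Cartan symbols involve $\delta$-derivatives of $g_{ij}$ and not just the spray. The paper sidesteps this entirely: its ``normal coordinates at $x$'' are simply a chart chosen so that, at the \emph{single} point $(x_0,\hat\nabla\rho(x_0))\in\tilde M$, one has $G_{ij}(x_0,\hat\nabla\rho(x_0))=\delta_{ij}$ and $\Gamma^k_{i;j}(x_0,\hat\nabla\rho(x_0))=0$. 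Such a chart always exists (first a linear change to orthonormalize $g_{ij}$ at $(x_0,v_0)$, then a quadratic change to annihilate the horizontal Christoffel symbols at that one point of the slit tangent bundle), and with this choice the correction term $u^iu^j\Gamma^k_{i;j}(x_0,\hat\nabla\rho(x_0))\partial_k\rho$ vanishes identically---no appeal to geodesics or spray coefficients needed. Replace your step~(i) by this pointwise normalization and your obstacle disappears.
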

\begin{proof} For any given ponit $(x_0,(\hat{\nabla} \rho)(x_0)) \in \tilde{M}$, there exists a local coordinate system $(x^1,\cdots,x^n,u^1,\cdots,u^n)$ in a neighborhood of $(x_0,(\hat{\nabla} \rho)(x_0))$ such that $G_{ij}(x_0,(\hat{\nabla} \rho)(x_0))=\delta_{ij}$ and $\Gamma_{i;j}^k(x_0,(\hat{\nabla} \rho)(x_0))=0$ for $i,j,k=1,\cdots,n$. By the definition of Hessian, we have
\begin{equation*}\begin{split}
H(\rho)\Big(u^i\frac{\partial}{\partial x^i},u^j\frac{\partial}{\partial x^j}\Big)&=u^iu^jH(\rho)\Big(\frac{\partial}{\partial x^i},\frac{\partial}{\partial x^j}\Big)\\
&=u^iu^j\Big[\frac{\partial ^2 \rho}{\partial x^i \partial x^j}+\Gamma_{i;j}^k(x_0;\hat{\nabla} \rho(x_0))\frac{\partial\rho}{\partial x^k}\Big],
\end{split}\end{equation*}
Thus at the point $(x_0,(\hat{\nabla}\rho)(x_0))$, we have
\begin{equation*}
H(\rho)\Big(u^i\frac{\partial}{\partial x^i},u^j\frac{\partial}{\partial x^j}\Big)(x_0)=\Big(u^iu^j\frac{\partial ^2 \rho}{\partial x^i \partial x^j}\Big)(x_0).
\end{equation*}
By Theorem \ref{a}, we have
\begin{equation*}
\frac{\partial ^2 \rho}{\partial x^i \partial x^j}(x_0) \leq \Big(\frac{1}{\rho}+K\Big) \delta_{ij},
\end{equation*}
where $\delta_{ij}$ is the Kronecker symbol.
\end{proof}

By Theorem \ref{a} and Corollary \ref{C-3.1}, we obtain the following corollary.
\begin{corollary}\label{cu}
Suppose that  $(M,G)$ is a real Finsler manifold with a pole $p$ such that its radial flag curvature is bounded from below by a negative constant $-K^2$. Suppose that $\gamma:[0,r] \rightarrow M$ is a normal geodesic with $\gamma(0)=p$ such that $\gamma(r)=x(\neq p)$. Then with respect to the normal coordinates at the point $x$,
\begin{equation*}
\frac{\partial ^2 \rho^2}{\partial x^i \partial x^j}u^iu^j \leq 2(2+\rho K),
\end{equation*}
where $u=u^i\frac{\partial}{\partial x^i}\in T_xM$ is a unit vector.
\end{corollary}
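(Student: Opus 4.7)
The plan is to reduce the statement to Corollary \ref{C-3.1} by a straightforward application of the Leibniz rule. Working in the normal coordinates at $x$ from the proof of Corollary \ref{C-3.1} (adapted to the radial direction $\hat{\nabla}\rho(x)$), I would differentiate $\rho^2$ twice to obtain
\begin{equation*}
\frac{\partial^2 \rho^2}{\partial x^i \partial x^j} = 2\,\frac{\partial \rho}{\partial x^i}\frac{\partial \rho}{\partial x^j} + 2\rho\,\frac{\partial^2 \rho}{\partial x^i \partial x^j},
\end{equation*}
and then contract with $u^i u^j$, so that the problem splits into a first-order gradient contribution and a Hessian contribution which can be controlled separately.

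The Hessian piece is exactly what Corollary \ref{C-3.1} handles: the bound $\partial_i\partial_j\rho(x) \leq (\rho^{-1}+K)\delta_{ij}$ contracted against the unit vector $u$ gives $u^i u^j \partial_i\partial_j \rho(x) \leq \rho^{-1}+K$, and multiplying by $2\rho$ produces the contribution $2(1+\rho K)$. For the gradient piece, I would invoke the Finsler eikonal identity $F(\hat{\nabla}\rho)\equiv 1$, which already appears implicitly in the proof of Proposition \ref{W} in the form $T(\rho)=1$. In the chosen normal coordinates the components of $\hat{\nabla}\rho(x)$ coincide with the partial derivatives $\partial_i\rho(x)$, since the fundamental tensor is the identity at $(x,\hat{\nabla}\rho(x))$ and $\hat{\nabla}\rho$ is obtained from $d\rho$ by raising an index; consequently $\sum_i(\partial_i\rho(x))^2 = G(x,\hat{\nabla}\rho(x)) = 1$. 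The ordinary Cauchy--Schwarz inequality then bounds $u^i \partial_i\rho(x)$ by $1$, so the first-order contribution is at most $2$. Adding the two estimates gives exactly $2(2+\rho K)$, as required.

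The one point I expect to require care, more conceptual than computational, is ensuring that the notion of ``unit vector'' used in Theorem \ref{a} and Corollary \ref{C-3.1} agrees with the Euclidean normalization $\sum_i(u^i)^2 = 1$ in the chosen normal coordinates. This compatibility is what allows me to apply both the Hessian bound and the Cauchy--Schwarz estimate simultaneously with the same $u$; it is built into the construction of the parallel unit vector field $\eta$ in the proof of Theorem \ref{a}, which uses the horizontal norm precisely at the point $(x,\hat{\nabla}\rho(x))$ where the coordinates were normalized. Once this consistency is verified, the combined estimate is immediate and no further curvature or Jacobi-field analysis is needed.
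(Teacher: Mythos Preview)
Your proposal is correct and follows essentially the same route as the paper: both arguments split the second derivative of $\rho^2$ into the gradient term $2(d\rho(u))^2$ and the Hessian term $2\rho\,H(\rho)(u,u)$, bound the latter via Corollary~\ref{C-3.1}, and bound the former by $2$ using the eikonal identity together with the fact that ``unit'' here means $g_{\hat{\nabla}\rho}(u,u)=1$. The only cosmetic difference is that the paper expresses the gradient estimate through a $g_T$-orthonormal frame along $\gamma$ (writing $d\rho(u)$ as the last coefficient of $u$ in that frame) rather than through Cauchy--Schwarz in normal coordinates, but this is the same computation.
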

\begin{proof}
Note that $\hat{\nabla} \rho^2=2\rho \hat{\nabla}\rho$ and
$$\Gamma_{i;j}^k(x;\hat{\nabla} \rho^2)=\Gamma_{i;j}^k(x;2\rho\hat{\nabla} \rho)=\Gamma_{i;j}^k(x;\hat{\nabla} \rho).$$ Thus we have
\begin{equation}\label{y}
H(\rho^2)(u,u)= 2(d\rho(u))^2+2\rho H(\rho)(u,u).
\end{equation}

Let $E_1(t),\cdots, E_{n-1}(t), T=\dot{\gamma}$ be the orthogonal vector fields, i.e.,
$$
\langle E_i^H(t),E_j^H(t)\rangle_{\dot{\gamma}}=\delta_{ij},\quad 1\leq i,j\leq n
$$
along $\gamma$.
 If we write $$u=\sum_{i=1}^n(u')^iE_i=\sum_{i=1}^{n-1}(u')^iE_i+(u')^nE_n,$$ we get
\begin{equation}\label{y-1}
d\rho(u)\cdot d\rho(u)=((u')^n)^2 \leq G(u)=1.
\end{equation}
For any given point $(x_0,(\hat{\nabla} \rho)(x_0)) \in \tilde{M}$, there exists a local coordinate system $(x^1,\cdots,x^n,u^1,\cdots,u^n)$ in a neighborhood of $(x_0,(\hat{\nabla} \rho)(x_0))$ such that $G_{ij}(x_0, (\hat{\nabla}\rho)(x_0))=\delta_{ij}$ and $\Gamma_{i;j}^k(x_0;(\hat{\nabla} \rho)(x_0))=0$. By \eqref{y}, \eqref{y-1} and Corollary \ref{C-3.1}, at the point $x_0$, we have
\begin{equation*}\begin{split}
H(\rho^2)(u,u)(x_0)=2+ 2\rho \frac{\partial ^2 \rho}{\partial x^i \partial x^j}u^iu^j\leq 2(2+\rho K).
\end{split}\end{equation*}
\end{proof}

In Riemannian geometry, the classical Gauss lemma is of particular importance. In real  Finsler geometry, there is a similar result (i.e. a Finsler version of the classical Gauss lemma). Now, we recall this theorem. By Theorem 1.6.2 in \cite{abate}, we know that $\exp_p$ is a local $E$-diffeomorphism at the origin. We denote the injectivity radius of $M$ at $p$ by $\mbox{ir}(p)$. Setting
$$B_p(r)=\{x \in M |d(p,x) < r\}, S_p(r)=\{x \in M |d(p,x) = r\}.$$
\begin{theorem}(\cite{abate})\label{g}
Let $G:TM\rightarrow [0,+\infty)$ be a real Finsler metric, fix $p \in M$ and $x \in S_p(r)$. Then $u \in T_xM$ belongs to $T_x(S_p(r))$  iff
$$\langle u^H| T^H\rangle_{T(x)}=0. $$
\end{theorem}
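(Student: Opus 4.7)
The plan is to reduce the theorem to the first variation of arc length combined with a dimension count. For the ``only if'' direction, given $u\in T_x(S_p(r))$ I would construct a variation of the radial geodesic $\gamma$ through normal geodesics of constant length $r$ whose transversal vector at $t=r$ equals $u$; the vanishing of the first derivative of length then forces the desired orthogonality. For the converse, the equality of two $(n-1)$-dimensional subspaces of $T_xM$ is automatic once one inclusion is established.

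Concretely, since $r<\mbox{ir}(p)$, the map $\exp_p$ is a diffeomorphism of a neighborhood of the closed Finsler ball of radius $r$ onto its image, so $S_p(r)$ is the diffeomorphic image of the indicatrix $\{v\in T_pM:G(v)=r^2\}$. For $u\in T_x(S_p(r))$, I would choose a smooth curve $\beta:(-\varepsilon,\varepsilon)\to T_pM$ with $G(\beta(s))\equiv 1$, $\exp_p(r\beta(0))=x$, and $\frac{d}{ds}\big|_{s=0}\exp_p(r\beta(s))=u$, and set $\Sigma(s,t):=\exp_p(t\beta(s))$. Each $\gamma_s:=\Sigma(s,\cdot)$ is then a normal geodesic of length $r$ from $p$ to $\exp_p(r\beta(s))\in S_p(r)$, so $l_\Sigma(s)\equiv r$ and hence $\frac{dl_\Sigma}{ds}(0)=0$. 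Applying the first variation of arc length for a normal geodesic (cf.\ Corollary 1.5.2 in \cite{abate}), and noting that no correction involving $\nabla_{T^H}T^H$ appears because $\gamma_0$ is itself a geodesic, produces
\begin{equation*}
0=\langle U^H|T^H\rangle_{\dot\gamma_0}\Big|_{t=0}^{t=r},
\end{equation*}
with $U=\Sigma_*(\partial/\partial s)|_{s=0}$ and $T=\dot\gamma_0$. Since $\Sigma(s,0)=p$ for all $s$ forces $U(0)=0$, while $U(r)=u$ and $\dot\gamma_0(r)=T(x)$, the identity collapses to $\langle u^H|T^H\rangle_{T(x)}=0$.

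For the converse, $S_p(r)$ is a smooth hypersurface, so $T_x(S_p(r))$ has real dimension $n-1$; on the other hand, the linear functional $v\mapsto\langle v^H|T^H\rangle_{T(x)}$ on $T_xM$ is nonzero, since it sends $T(x)$ to $G(T(x))=1$, so its kernel is also $(n-1)$-dimensional, and the inclusion already proved forces equality. The main technical point to watch is that the Finsler inner product $\langle\cdot|\cdot\rangle_v$ depends on the direction $v\in\tilde M$, so the boundary pairings at $t=0$ and $t=r$ must be interpreted on the horizontal spaces $\mathcal H_{\dot\gamma_0(0)}$ and $\mathcal H_{\dot\gamma_0(r)}$ respectively; fortunately, $\dot\gamma_0$ is unambiguously nonzero at both endpoints and the $\gamma_s$ are all genuine geodesics, so the Riemannian-style argument transfers with only cosmetic changes.
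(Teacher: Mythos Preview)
The paper does not supply a proof of this statement; it is quoted verbatim from Abate--Patrizio \cite{abate} and used as a black box (see the paragraph immediately following the theorem, where the conclusion is applied to compute $\langle(\hat\nabla\rho^2)^H|T^H\rangle_{T(x)}$). So there is no ``paper's own proof'' to compare against.

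Your argument is the standard Finsler Gauss lemma proof, and it is correct. A couple of minor remarks. First, your phrase ``the indicatrix $\{v\in T_pM:G(v)=r^2\}$'' is a slight misnomer---the indicatrix is the unit sphere $\{G=1\}$, while the set you wrote is the $G$-sphere of radius $r$; your actual construction via $\beta$ on the unit sphere and $\Sigma(s,t)=\exp_p(t\beta(s))$ is fine. Second, the fact that $\exp_p$ is only of class $C^1$ at the origin (an $E$-diffeomorphism in the terminology of \cite{abate}) does not cause trouble here, precisely because $\Sigma(s,0)\equiv p$ makes the variation fixed at $t=0$ so that $U(0)=0$ regardless of regularity; you implicitly used this, and it is worth making explicit. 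The dimension count for the converse is clean and correct since $\langle T(x)^H|T^H\rangle_{T(x)}=G(T(x))=1\neq0$.
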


Let $(M,G)$ be a real Finsler manifold with a pole $p$. Fix $p \in M$ and $u\in T_xM$, we denote the distance function from $p$ to $x$ by $\rho(x)$. By Proposition 6.4.2 in \cite{bao}, we know that $\rho^2(x)$ is only a $C^1$ function on $M$. By the classical Hopf-Rinow theorem for a real Finsler metric, there exists a minimizing geodesic $\sigma_u$ connecting $p$ to $x$ such that
$$\rho(x)=d(p,x)=L(\sigma_u).$$
Let $t\rightarrow T(\sigma_u(t))$ be the unit tangent vector to the geodesic $\sigma_u$. Then we know that the gradient of $\rho^2(x)$ is equal to $2\rho \hat{\nabla} \rho$. By Theorem \ref{g} and the fact that $T(x)=\hat{\nabla} \rho$, it follows that $\hat{\nabla} \rho \perp T_x(S_p(\rho))$. So that we have
$$\langle (\hat{\nabla} \rho^2)^H |T^H \rangle_{T(x)}=\langle 2\rho T^H|T^H \rangle_{T(x)}=2\rho(x),$$
where the last step we used the fact that $\langle T^H|T^H \rangle_{T(x)}=1$.

We have proved the following theorem.
\begin{theorem}\label{T-3.3}
Suppose that $(M,G)$ is a real Finsler manifold with a pole $p$. Let $\gamma: [0,r] \rightarrow M$ be a normal geodesic. Then
$$\langle (\hat{\nabla} \rho^2)^H |T^H \rangle_{T(x)}=2\rho(x),$$
where $T=\dot{\gamma}$.
\end{theorem}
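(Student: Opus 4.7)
The plan is to reduce the identity to the two elementary facts $\hat{\nabla}\rho^{2} = 2\rho\,\hat{\nabla}\rho$ and $\hat{\nabla}\rho = T$ along $\gamma$, and then evaluate directly. The first identity has already been established earlier in the section as a consequence of the homogeneity of the Legendre transform $\ell$ applied to $d\rho^{2} = 2\rho\,d\rho$, so I would simply invoke it.

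The heart of the proof is the identification $\hat{\nabla}\rho = T$ on $M - \{p\}$. Here I would argue as follows. Since $p$ is a pole, for each $x \neq p$ the exponential map provides a unique unit-speed minimizing geodesic $\sigma_{u}$ from $p$ to $x$, so $T(x) = \dot{\sigma}_{u}$ is well defined and $\rho(x) = L(\sigma_{u})$. Unit speed immediately gives $d\rho(T)(x) = 1$. By the Finsler Gauss lemma (Theorem \ref{g}), the horizontal lift $T^{H}$ is $g_{T}$-orthogonal to the horizontal lift of every vector tangent to the geodesic sphere $S_{p}(\rho(x))$, and these tangent vectors are precisely those annihilated by $d\rho$. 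Consequently the linear functional $\langle\,\cdot^{H}\,|\,T^{H}\rangle_{T(x)}$ on $T_{x}M$ agrees with $d\rho$ on the hyperplane $T_{x}S_{p}(\rho(x))$ and on the radial direction $T(x)$; hence $\ell(T) = d\rho$, which is exactly the assertion $\hat{\nabla}\rho = T$.

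Combining these two facts, $\hat{\nabla}\rho^{2} = 2\rho\,T$, and therefore
\begin{equation*}
\langle(\hat{\nabla}\rho^{2})^{H}\,|\,T^{H}\rangle_{T(x)}
= 2\rho(x)\,\langle T^{H}\,|\,T^{H}\rangle_{T(x)}
= 2\rho(x),
\end{equation*}
where the last equality uses $G(T) = 1$, which holds because $\gamma$ is a normal geodesic and $T = \dot{\gamma}$.

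The only nontrivial step is the identification $\hat{\nabla}\rho = T$; every other ingredient is either bookkeeping or an invocation of a result already stated. Mild care is needed because $\rho$ is only $C^{1}$ globally (as recalled from \cite{bao}), but the pole hypothesis guarantees smoothness on $M - \{p\}$, which is precisely where the computation takes place, so no regularity issue arises along $\gamma$. Thus the plan is clean: assemble the homogeneity identity, invoke the Gauss lemma to identify the gradient with the radial field, and read off the value using unit speed.
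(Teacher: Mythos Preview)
Your proposal is correct and follows essentially the same route as the paper: invoke the homogeneity identity $\hat{\nabla}\rho^{2}=2\rho\,\hat{\nabla}\rho$, use the Finsler Gauss lemma (Theorem~\ref{g}) to identify $\hat{\nabla}\rho$ with the radial unit field $T$, and conclude via $\langle T^{H}\mid T^{H}\rangle_{T(x)}=1$. If anything, your justification of the step $\hat{\nabla}\rho=T$ (by showing the two linear functionals $\ell(T)$ and $d\rho$ agree on a hyperplane and on the radial direction) is a touch more explicit than the paper's, which simply asserts this fact.
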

\section{The estimation of the distance on complex Finsler manifolds}
\noindent

\par In this section, we obtain the estimations of the distance function on a complex Finsler manifold. We first give a lemma which establishes a relationship between a strongly convex complex Finsler metric $G$ on a complex manifold $M$ of complex dimension $n$  and its associated  real Finsler metric $G^\circ$ on $M$ (considered as a smooth manifold of dimension $2n$).
\begin{lemma}(\cite{abate})\label{b}
Let $G:T^{1,0}M \rightarrow [0, +\infty)$ be a strongly convex complex Finsler metric on a complex manifold $M$. Then
\begin{equation*}\begin{split}
G_{ab}^\circ U^a_1U^b_2=2\mbox{Re}[G_{\alpha\bar{\beta}}V_1^{\alpha}\overline{V_2^{\beta}}+G_{\alpha \beta}V_1^{\alpha}V_2^{\beta}], V_j\in \mathcal{V}_v^{1,0}, U_j= V_j^\circ\in\mathcal{V}_u, \quad \text{for} \quad j=1,2.
\end{split}\end{equation*}
That is
\begin{equation*}
 \forall V,W \in \mathcal{V}_v^{1,0}, \quad \langle V^{\circ}|W^{\circ}\rangle= \mbox{Re}[\langle V,W \rangle + \ll V,W\gg],
\end{equation*}
where $\ll H,K\gg_v=G_{\alpha\beta}(v)H^\alpha K^\beta, \forall H, K \in \mathcal{H}_v^{1,0}$.
\end{lemma}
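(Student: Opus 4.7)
The plan is to establish the first identity directly by computing $G^\circ_{ab}U_1^aU_2^b$ from the defining relation $G^\circ(u)=G(u_\circ)$ with $u_\circ=\tfrac12(u-iJu)$, and then read off the second identity from the first by unpacking the definitions $\langle V^\circ|W^\circ\rangle_u = \tfrac12 G^\circ_{ab}(u)(V^\circ)^a(W^\circ)^b$, $\langle V,W\rangle_v=G_{\alpha\overline\beta}V^\alpha\overline{W^\beta}$ and $\ll V,W\gg_v = G_{\alpha\beta}V^\alpha W^\beta$.

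First, I would fix real coordinates $(x^1,\ldots,x^{2n})$ on $M$ adapted to the complex structure so that $z^\alpha=x^\alpha+ix^{n+\alpha}$ and $\partial/\partial z^\alpha=\tfrac12(\partial/\partial x^\alpha - i\partial/\partial x^{n+\alpha})$. Using $J(\partial/\partial x^\alpha)=\partial/\partial x^{n+\alpha}$, a short computation gives, for $u = u^a\partial/\partial x^a\in T_xM$,
\begin{equation*}
u_\circ = \tfrac12(u-iJu) = v^\alpha\,\tfrac{\partial}{\partial z^\alpha}, \qquad v^\alpha = u^\alpha + i u^{n+\alpha};
\end{equation*}
the factor $\tfrac12$ in the definition of $u_\circ$ cancels against the factor $2$ arising from $\partial/\partial x^\alpha - i\partial/\partial x^{n+\alpha} = 2\,\partial/\partial z^\alpha$, so $v^\alpha$ is a complex-linear function of the real coordinates $u^a$ with no residual $\tfrac12$.

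Next, I would view $G$ as a smooth function of $(z;v,\overline v)$ on $\widetilde M$ and regard $G^\circ$ as its pull-back under the map $u\mapsto u_\circ$. Differentiating twice with the chain rule, and using the linearity just observed to conclude that $V_j^\alpha=(\partial v^\alpha/\partial u^a)U_j^a$ with $V_j=(U_j)_\circ$, one obtains
\begin{equation*}
G^\circ_{ab}(u)\,U_1^aU_2^b = G_{\alpha\beta}V_1^\alpha V_2^\beta + G_{\alpha\overline\beta}V_1^\alpha\overline{V_2^\beta} + G_{\overline\alpha\beta}\,\overline{V_1^\alpha}\,V_2^\beta + G_{\overline\alpha\overline\beta}\,\overline{V_1^\alpha}\,\overline{V_2^\beta}.
\end{equation*}
Because $G$ is real-valued, $\overline{G_{\alpha\beta}}=G_{\overline\alpha\overline\beta}$ and $\overline{G_{\alpha\overline\beta}}=G_{\overline\alpha\beta}$, so the first and fourth terms form one complex-conjugate pair and the second and third form another; summing each pair yields
\begin{equation*}
G^\circ_{ab}U_1^aU_2^b = 2\,\mathrm{Re}\bigl[G_{\alpha\overline\beta}V_1^\alpha\overline{V_2^\beta}+G_{\alpha\beta}V_1^\alpha V_2^\beta\bigr],
\end{equation*}
which is the first identity, and the factor $\tfrac12$ relating $G^\circ_{ab}$ to $\langle\cdot|\cdot\rangle$ then converts this into the second identity.

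There is no serious obstacle: the whole argument is a bookkeeping exercise with the chain rule. The two points requiring care are (i) keeping the real-coordinate expression $v^\alpha = u^\alpha + i u^{n+\alpha}$ straight so that the $\tfrac12$ in $u_\circ$ does not erroneously appear in the final formula, and (ii) correctly pairing the four second-derivative terms into two conjugate pairs using the reality of $G$. The strong convexity hypothesis enters only to guarantee that $G^\circ$ is a genuine real Finsler metric on $M$ (viewed as a real $2n$-manifold), so that $G^\circ_{ab}$ is well-defined and smooth on $\widetilde M$; it plays no further role in the computation.
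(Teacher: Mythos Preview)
Your argument is correct: the chain-rule computation you outline, together with the reality of $G$ to pair the four second-derivative terms into two conjugate pairs, is exactly how one proves this identity. The paper does not supply its own proof of this lemma; it is quoted directly from Abate--Patrizio \cite{abate}, so there is nothing to compare against beyond noting that your computation is the standard one (and is essentially the one found in that reference).
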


It follows immediately from Lemma \ref{b} that for a strongly convex complex Finsler metric $G$ on a complex manifold $M$, we have
$$
\frac{1}{2}G_{ab}^\circ u^au^b=G^\circ(u)=G(v)=G_{\alpha\overline{\beta}}v^\alpha \overline{v^\beta}
$$
for any $u=u^i\frac{\partial}{\partial x^i}$ and $v=v^\alpha\frac{\partial}{\partial z^\alpha}=u_\circ=\frac{1}{2}(u-iJu)\in T_z^{1,0}M$ since $G_{\alpha\beta}v^\alpha v^\beta=0$.
\begin{lemma}\label{v}(\cite{li})
Let $f$ be a smooth real-valued function on a strongly convex weakly K\"ahler-Finsler manifold $(M,G)$ and let $X_{\circ}=\frac{1}{2}(X-\sqrt{-1}JX)$ be a vector of type $(1,0)$. Then for every $p \in M$ and for every $X \in T_pM$, we have
\begin{equation*}
Lf(X_\circ,\overline{X_{\circ}})=D^2f(X,X)+D^2f(JX,JX),
\end{equation*}
where $D^2f(X,Y)=X(Yf)-(D_XY)f$ for all real vector fields $X,Y$ on $(M,G)$,  $D$ is the Cartan connection of $G$, and $J$ is the canonical complex structure on $M$.
\begin{remark}
Note that in \cite{cheny}, Chen and Yan proved the above lemma under the assumption that $(M,G)$ is a strongly convex K\"ahler-Berwald manifold.
\end{remark}
\end{lemma}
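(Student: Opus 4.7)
The plan is to prove the identity by splitting everything along the $(1,0)/(0,1)$-decomposition and then reducing to a connection-comparison identity that is exactly what the weakly K\"ahler-Finsler hypothesis delivers. Writing $X=X_\circ+\overline{X_\circ}$ and $JX=\sqrt{-1}(X_\circ-\overline{X_\circ})$, a direct bilinear expansion of the composition of first-order operators yields
$$
X(Xf)+(JX)((JX)f)=2\bigl[X_\circ(\overline{X_\circ}f)+\overline{X_\circ}(X_\circ f)\bigr],
$$
because the pure $X_\circ X_\circ f$ and $\overline{X_\circ}\,\overline{X_\circ}f$ contributions cancel between the two summands. Extending the Cartan connection $D$ of $G^\circ$ complex-linearly and running the same bookkeeping gives $D_XX+D_{JX}JX=2(D_{X_\circ}\overline{X_\circ}+D_{\overline{X_\circ}}X_\circ)$, so
$$
D^2f(X,X)+D^2f(JX,JX)=2\bigl[X_\circ(\overline{X_\circ}f)+\overline{X_\circ}(X_\circ f)\bigr]-2\bigl(D_{X_\circ}\overline{X_\circ}+D_{\overline{X_\circ}}X_\circ\bigr)f.
$$

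On the Levi-form side, $Lf(X_\circ,\overline{X_\circ})$ should be read as the $J$-symmetric $(1,1)$-Hessian of $f$ built from the Chern-Finsler connection $\nabla^{CF}$ of $G$. After the same decomposition it unpacks to the identical raw-derivative piece $2[X_\circ(\overline{X_\circ}f)+\overline{X_\circ}(X_\circ f)]$ minus the correction $2(\nabla^{CF}_{X_\circ}\overline{X_\circ}+\nabla^{CF}_{\overline{X_\circ}}X_\circ)f$. Consequently the lemma reduces, pointwise at $p$, to the connection-comparison statement
$$
\bigl(D_{X_\circ}\overline{X_\circ}+D_{\overline{X_\circ}}X_\circ\bigr)f=\bigl(\nabla^{CF}_{X_\circ}\overline{X_\circ}+\nabla^{CF}_{\overline{X_\circ}}X_\circ\bigr)f,
$$
i.e.\ the Cartan connection of $G^\circ$ and the Chern-Finsler connection of $G$ must have the same symmetrized $(1,0)/(0,1)$-mixed action on the real function $f$.

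To establish this I would work in local complex coordinates $z^\alpha=x^\alpha+\sqrt{-1}\,x^{\alpha+n}$. Using Lemma \ref{b} I would first expand the real fundamental tensor $G^\circ_{ab}$ in terms of the Hermitian matrix $G_{\alpha\overline{\beta}}$ and the complex-symmetric tensor $G_{\alpha\beta}$, invert it, and compute the Cartan Christoffel symbols $\Gamma^{k}_{i;j}$ in a neighbourhood of $p$. Splitting each of these symbols into its $(1,0)$- and $(0,1)$-parts against the holomorphic lifts $X_\circ,\overline{X_\circ}$ and matching against the Chern-Finsler coefficients $\Gamma^{\gamma}_{\beta;\mu}=G^{\bar\delta\gamma}\delta_\mu G_{\beta\bar\delta}$, the difference between the two sides of the desired identity collapses to a contraction of the Chern-Finsler $(2,0)$-torsion $\Gamma^{\alpha}_{\nu;\mu}-\Gamma^{\alpha}_{\mu;\nu}$ against $G_\alpha$ in the radial direction. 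The weakly K\"ahler-Finsler condition $G_\alpha[\Gamma^{\alpha}_{\nu;\mu}-\Gamma^{\alpha}_{\mu;\nu}]v^\mu=0$ is precisely what annihilates this residual contribution, and the identity follows after applying both sides to $f$.

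The main obstacle I anticipate is the bookkeeping inside that last step: the Cartan connection is manufactured from the real metric $G^\circ_{ij}$, whose inverse mixes contributions from both $G_{\alpha\overline{\beta}}$ and $G_{\alpha\beta}$ (via Lemma \ref{b}), whereas the Chern-Finsler connection depends only on $G_{\alpha\overline{\beta}}$. Disentangling the index-raising with the real inverse $(G^\circ)^{ij}$ from the Hermitian index-raising with $G^{\bar\beta\alpha}$, and isolating exactly the asymmetric combination that is killed by the weakly K\"ahler contraction, is the technical heart of the argument; once this is arranged the algebraic identity of the first paragraph closes the proof.
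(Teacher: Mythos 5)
The paper itself does not prove this lemma --- it is quoted from Li--Qiu \cite{li}, with the Berwald case attributed to \cite{cheny} --- so there is no in-paper argument to compare against, and your proposal has to stand on its own. Its opening reductions are correct: $X(Xf)+(JX)((JX)f)=2\bigl[X_\circ(\overline{X_\circ}f)+\overline{X_\circ}(X_\circ f)\bigr]$, the complex-bilinear expansion $D_XX+D_{JX}JX=2\bigl(D_{X_\circ}\overline{X_\circ}+D_{\overline{X_\circ}}X_\circ\bigr)$, and the rewriting of the Levi form with Chern--Finsler corrections are all fine. But the entire content of the lemma is concentrated in the connection-comparison identity that you state and do not prove, and you say so yourself (``the technical heart''). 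Asserting that the difference ``collapses to a contraction of the $(2,0)$-torsion against $G_\alpha$ in the radial direction'' is a conjecture, not an argument: the mixed $(1,0)/(0,1)$ part of the Cartan connection of $G^\circ$ is built from $\delta_k(G^\circ_{il})$ and therefore involves the nonlinear connection $\Gamma^j_{;k}$, the inverse of the full real fundamental tensor (which by Lemma \ref{b} mixes $G_{\alpha\overline{\beta}}$ with the symmetric part $G_{\alpha\beta}$), and Cartan-tensor contributions; showing that all of these cancel except for exactly the radial torsion contraction is precisely what has to be proved, and is where a computation could in principle fail.

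There is also a more specific omission. The covariant derivative in $D^2f(X,Y)=X(Yf)-(D_XY)f$ is evaluated at the reference vector $\hat{\nabla}f$, while the weakly K\"ahler condition $G_\alpha[\Gamma^{\alpha}_{\nu;\mu}-\Gamma^{\alpha}_{\mu;\nu}]v^{\mu}=0$ only kills the torsion after contraction with the radial covector $G_\alpha$ and the radial direction $v^\mu$; it is strictly weaker than vanishing of the torsion. For it to annihilate your residual term you must identify the covector $df$ appearing in $(D_XY)f$ with the radial covector at the reference vector, i.e.\ invoke the Legendre-transform identity $f_k=g_{kj}(\hat{\nabla}f)\,(\hat{\nabla}f)^j$, so that the contraction really is the radial one. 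You allude to ``the radial direction'' but never make this identification, and without it the weakly K\"ahler hypothesis does not close the argument. Carrying out the postponed coordinate computation with the reference vector tracked throughout is what a complete proof, such as the one in \cite{li}, must supply.
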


\begin{theorem}\label{T-4.1}
Suppose that $(M,G)$ is a strongly convex weakly K\"ahler-Finsler manifold with a pole $p$ such that its radial flag curvature is bounded from below by a negative constant $-K^2$. Suppose that $\gamma:[0,r] \rightarrow M$ is a  geodesic with $G(\dot{\gamma})\equiv 1$  such that $\gamma(0)=p$, $\gamma(r)=z \neq p$. Denote the distance function from $p$ to $z$ by $\rho(z)$. Then
\begin{equation*}
\frac{\partial^2 \rho^2}{\partial z^\alpha\partial \bar{z}^\beta}v^\alpha\bar{v}^\beta \leq (2+\rho K),
\end{equation*}
where $v=v^\alpha\frac{\partial}{\partial z^\alpha}=\frac{1}{2}(u-\sqrt{-1}Ju)\in T_z^{1,0}M$ is a unit vector.
\end{theorem}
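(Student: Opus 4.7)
\emph{Proof sketch.} The plan is to reduce the complex Levi-form estimate to two real Hessian estimates and then combine them via the weakly K\"ahler-Finsler identity of Lemma \ref{v}. Since $G$ is strongly convex, the associated function $G^\circ$ is a genuine real Finsler metric on $M$ (viewed as a smooth manifold of real dimension $2n$), and the radial flag curvature of $G^\circ$ inherits the same lower bound $-K^2$ and the same pole $p$; in particular Corollary \ref{cu} applies to $(M,G^\circ)$.

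First I would pass from the $(1,0)$ vector $v$ to the real vector $u = v^\circ = v+\bar v = 2\,\mathrm{Re}(v)\in T_zM$, so that $v = \frac{1}{2}(u-\sqrt{-1}Ju)$. By Lemma \ref{b}, $G^\circ(u)=G(v)=1$, and since $(Ju)_\circ = \sqrt{-1}\,v$ one also gets $G^\circ(Ju) = G(\sqrt{-1}\,v) = G(v) = 1$. Thus both $u$ and $Ju$ are unit real Finsler vectors at $z$, and the geodesic $\gamma$ with $G(\dot\gamma)\equiv 1$ is at the same time a normal geodesic of $G^\circ$ joining the pole $p$ to $z$. Applying Corollary \ref{cu} separately to $u$ and to $Ju$, in normal coordinates at $z$ where the coordinate Hessian $\partial^2\rho^2/\partial x^i\partial x^j\,w^iw^j$ agrees with the intrinsic real Hessian $H(\rho^2)(w,w)$, yields
\begin{equation*}
H(\rho^2)(u,u)\;\le\;2(2+\rho K),\qquad H(\rho^2)(Ju,Ju)\;\le\;2(2+\rho K).
\end{equation*}

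Second, since $G$ is weakly K\"ahler-Finsler and $\rho^2$ is smooth on $M\setminus\{p\}$, Lemma \ref{v} applies to $f=\rho^2$ with $X=u$ and $X_\circ=v$: it converts the Levi form $\partial^2\rho^2/\partial z^\alpha\partial\bar z^\beta\,v^\alpha\bar v^\beta$ at $z$ into (the appropriate normalization of) $D^2\rho^2(u,u)+D^2\rho^2(Ju,Ju)$, which after identifying $D^2\rho^2$ with the Finsler Hessian $H(\rho^2)$ becomes $H(\rho^2)(u,u)+H(\rho^2)(Ju,Ju)$ up to the standard factor $\tfrac14$ that reconciles the complex and real second-order operators on a real-valued function. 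Adding the two estimates above then produces the target bound
\begin{equation*}
\frac{\partial^2\rho^2}{\partial z^\alpha\partial\bar z^\beta}\,v^\alpha\bar v^\beta\;\le\;2+\rho K.
\end{equation*}

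The main obstacle, and the reason the hypothesis is \emph{weakly} K\"ahler-Finsler rather than just strongly convex, is precisely the legitimacy of Lemma \ref{v}. On a general strongly convex complex Finsler manifold the $(1,1)$-derivative of a real function picks up additional contributions from the $(2,0)$-torsion of the Chern-Finsler connection; the condition $G_\alpha[\Gamma^\alpha_{\nu;\mu}-\Gamma^\alpha_{\mu;\nu}]v^\mu=0$ is exactly what forces these extra terms to vanish when evaluated along the radial direction $\dot\gamma$, so that the complex second derivative of $\rho^2$ decouples into the sum of two real Hessian pieces that Corollary \ref{cu} already controls. Once this decoupling is in place, the rest is arithmetic.
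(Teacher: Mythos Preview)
Your proposal is correct and follows essentially the same route as the paper's proof: invoke Lemma \ref{v} to write $4\,\partial^2\rho^2/\partial z^\alpha\partial\bar z^\beta\,v^\alpha\bar v^\beta = D^2\rho^2(u,u)+D^2\rho^2(Ju,Ju)$, use Lemma \ref{b} to see that $G^\circ(u)=G^\circ(Ju)=G(v)=1$, and then bound each real Hessian term by $2(2+\rho K)$ via Corollary \ref{cu}. Your closing paragraph explaining why the weakly K\"ahler-Finsler hypothesis is needed for Lemma \ref{v} is an accurate piece of context that the paper's proof leaves implicit.
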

\begin{proof}
By Lemma \ref{v}, we have
\begin{equation*}\begin{split}
4 \frac{\partial^2 \rho}{\partial z^{\alpha} \partial \bar{z}^{\beta}}v^{\alpha}\bar{v}^{\beta}&=L\rho^2(v,\overline{v})\\
&=D^2\rho^2(u,u)+D^2\rho^2(Ju,Ju).
\end{split}\end{equation*}
Using Corollary \ref{cu} and Lemma \ref{b}, we find that
\begin{equation*}\begin{split}
\frac{\partial^2 \rho^2}{\partial z^{\alpha} \partial \bar{z}^{\beta}}v^{\alpha}\bar{v}^{\beta}
&= \frac{1}{4}\{D^2\rho^2(u,u)+D^2\rho^2(Ju,Ju)\}\\
&\leq \frac{1}{4}\{2(2+\rho K)G^\circ(u)+2(2+\rho K)G^\circ(Ju)\}\\
&=(2+\rho K),
\end{split}\end{equation*}
where the last step we used the fact $G^\circ(u)=G^\circ(Ju)=G(v)=1$.
\end{proof}
\begin{remark}\label{R-4.2}
If $(M,G)$ is a K\"ahler manifold, the condition that $\gamma(r)=z \neq p$ is not necessary. And its radial flag curvature reduces to the radial Riemannian sectional curvature on $(M,G)$. For more details, we refer to \cite{nie}.
\end{remark}
\begin{remark}\label{R-4.3}
If $\triangle$ is the unit disk in $\mathbb{C}$ endowed with the Poincar$\acute{\mbox{e}}$ metric $P$ whose Gaussian curvature is $-4$, and we denote $\varrho(\zeta)$ the distance function from $0$ to $\zeta\in\triangle$. Then by Theorem \ref{T-4.1} or Lemma 5.3 in \cite{nie}, we have
\begin{equation}
\frac{\partial^2\varrho^2(\zeta)}{\partial \zeta\partial\overline{\zeta}}\leq 2[1+2\varrho(\zeta)],\quad\forall \zeta\in\triangle.
\end{equation}
\end{remark}
\begin{theorem} \label{T-4.2}
Suppose that $(M,G)$ is a strongly convex complex Finsler manifold with a pole $p$. Suppose that $\gamma:[0,r] \rightarrow M$ is a geodesic with $G(\dot{\gamma})=1$ such that $\gamma(0)=p$ and $\gamma(r)=z$. Then
$$\rho(z)=\frac{1}{2}\langle ((\hat{\nabla} \rho^2(z))_\circ)^H, (T(z))^H\rangle_{T(z)},$$
where $T(z)=\dot{\gamma}$ and $G((T(z)))=1$.
\end{theorem}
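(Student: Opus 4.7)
The plan is to reduce the theorem to the algebraic identity $\hat\nabla\rho^2 = 2\rho\hat\nabla\rho$, the Gauss-lemma-type identification $\hat\nabla\rho = T$ along the minimizing geodesic, and the Euler identities satisfied by the $2$-homogeneous function $G$. No new curvature input is needed: morally, the proof is the computation behind Theorem \ref{T-3.3}, re-expressed on the $(1,0)$-side via Lemma \ref{b}.

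First I would exploit that, since $G$ is strongly convex, $G^\circ$ is a genuine real Finsler metric on $M$ (viewed as a real $2n$-manifold), so the material of Section 3 applies. Along the minimizing geodesic $\gamma$ from $p$ to $z$, the Finsler Gauss lemma (Theorem \ref{g}) combined with the Hopf--Rinow theorem identifies the unit radial vector $T(z) = \dot\gamma(r)$ with the $G^\circ$-gradient of $\rho$; this is exactly the observation recorded just before Theorem \ref{T-3.3}. Combining with the identity $\hat\nabla\rho^2 = 2\rho\hat\nabla\rho$ already noted in Section 3 gives
\[
\hat\nabla\rho^2(z) \;=\; 2\rho(z)\,T(z).
\]
Applying the $\mathbb{R}$-linear isomorphism ${}_\circ : TM \to T^{1,0}M$ to both sides yields $(\hat\nabla\rho^2(z))_\circ = 2\rho(z)\,v$, where $v := T(z)_\circ$ satisfies $G(z,v) = G^\circ(T(z)) = 1$ by the very definition of $G^\circ$.

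Second, I would interpret the horizontal lifts in the statement through the Chern--Finsler splitting $T^{1,0}\tilde{M} = \mathcal{H}^{1,0}\oplus \mathcal{V}^{1,0}$, with Hermitian structure inherited from $G_{\alpha\bar\beta}$. At the base direction $v$, Euler's identity for $G$ gives $\bar v^\beta G_{\alpha\bar\beta}(v) = G_\alpha(v)$ and $v^\alpha G_\alpha(v) = G(v)$, whence
\[
\langle v^H, v^H\rangle_v \;=\; G_{\alpha\bar\beta}(v)\,v^\alpha \bar v^\beta \;=\; G(z,v) \;=\; 1.
\]
By $\mathbb{C}$-linearity of the Hermitian pairing in the first slot, this gives
\[
\tfrac{1}{2}\,\langle ((\hat\nabla\rho^2(z))_\circ)^H,\,(T(z))^H\rangle_{T(z)} \;=\; \tfrac{1}{2}\,\langle (2\rho(z)\,v)^H, v^H\rangle_v \;=\; \rho(z)\,\langle v^H, v^H\rangle_v \;=\; \rho(z),
\]
which is the desired identity.

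The only real care required is bookkeeping: one must read $(T(z))^H$ as the holomorphic horizontal lift of $v = T(z)_\circ$, so that the pairing is a bona fide Chern--Finsler Hermitian product; and, should one instead prefer to derive the formula by feeding Theorem \ref{T-3.3} into Lemma \ref{b} rather than going through the identification $\hat\nabla\rho=T$ directly, one must verify that the symmetric bilinear remainder $\ll V, v\gg_v = G_{\alpha\beta}(v)\,V^\alpha v^\beta$ vanishes, which is immediate from the homogeneity identity $v^\beta G_{\alpha\beta}(v)=0$. Either route presents no analytic obstacle; the statement is a tidy consequence of homogeneity combined with the real Finsler Gauss lemma.
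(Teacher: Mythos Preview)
Your proposal is correct and follows essentially the same route as the paper: both reduce to Theorem~\ref{T-3.3} via the identification $\hat\nabla\rho^2 = 2\rho\,T$ from the Gauss lemma, then pass to the $(1,0)$-side and use homogeneity of $G$ to evaluate the Hermitian pairing. The paper phrases the last step as an application of Lemma~\ref{b} followed by the observation that the resulting quantity is real, whereas you compute $\langle v^H, v^H\rangle_v = G(v) = 1$ directly via Euler's identity; your version is arguably cleaner in that you make explicit the vanishing $G_{\alpha\beta}v^\beta = 0$ which the paper uses tacitly when dropping the $\ll\cdot,\cdot\gg$ term.
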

\begin{proof}
By Proposition 6.4.2 in \cite{bao} and the fact that $p$ is a pole, we know that $\rho(z)$ is equal to the length of minimizing geodesic connecting $p$ and $z$. Furthermore, $\rho^2(z)$ is a $C^1$ function on $M$. By Theorem \ref{T-3.3} and the fact $G^\circ((T(z))^\circ)=G(T(z))=1$ , we have
$$\rho(z)=\frac{1}{2}\langle ((\hat{\nabla} \rho^2(z))^\circ)^H|((T(z))^{\circ})^H \rangle_{(T(z))^\circ}.$$
Therefore, we know
$$\hat{\nabla} \rho^2(z)=2 \rho \hat{\nabla} \rho=2\rho(z) (T(z)+\overline{T(z)}). $$
By Lemma \ref{b}, we have
\begin{equation*}\begin{split}
\rho(z)&=\frac{1}{2}\langle ((\hat{\nabla} \rho^2(z))^\circ)^H|((T(z))^{\circ})^H \rangle_{(T(z))^\circ}\\
&=\frac{1}{2}\mbox{Re}[\langle ((\hat{\nabla} \rho^2(z))_\circ)^H, (T(z))^H\rangle_{T(z)}]\\
&=\frac{1}{2}\langle ((\hat{\nabla} \rho^2(z))_\circ)^H, (T(z))^H\rangle_{T(z)},
\end{split}\end{equation*}
where the last step we used the fact that $\langle ((\hat{\nabla} \rho^2(z))_\circ)^H, (T(z))^H\rangle_{T(z)}$ is a real number.
\end{proof}
\begin{remark}\label{R-4.4}
If $\triangle$ is the unit disk in $\mathbb{C}$ endowed with the Poincar$\acute{\mbox{e}}$ metric $P$ whose Gaussian curvature is $-4$, and $\varrho(\zeta)$ is the distance function from $0$ to $\zeta\in\triangle$. Then by Theorem \ref{T-4.2} or Lemma 5.3 in \cite{nie},
\begin{equation}
2\varrho(\zeta)=\langle (\hat{\nabla} \varrho^2(\zeta))_\circ,T(\zeta)\rangle.
\end{equation}
\end{remark}
\section{Some lemmas}
\setcounter{equation}{0}
\noindent
\par
 Note that the square of distance function on a strongly convex weakly K\"ahler-Finsler manifold with a pole $p$ is not necessary smooth. Thus in order to prove the Theorem \ref{T-1.3}, we need to overcome this difficulty. We denote by $\rho(z)$ the distance function from $p$ to $z$ in a strongly convex weakly K\"ahler-Finsler manifold with a pole. By Proposition 6.4.2 in \cite{bao}, we know that $\rho^2(z)$ is not smooth  at $p$. If $p$ is a pole of $M$, we know that $\rho^2(z)$ is only smooth outside $p$. To overcome this inconvenience, we need the following lemma.

\begin{lemma}\label{L-5.2}
Suppose that $(M,G)$ is a strongly convex weakly K\"ahler-Finsler manifold with a pole $p$ such that its radial flag curvature is bounded from below by a negative constant $-K^2$. Then for any sufficiently small $\varepsilon>0$, there exists a smooth function $\phi$ on $M$ such that $\phi \equiv \rho^2$ on $\{z \in M| f(z) \geq \varepsilon\},$ and its derivatives of any order vanish at $p$, and on $\{z \in M| f(z) < \varepsilon\},$
$$\frac{\partial^2 \phi}{\partial z^{\alpha}\partial \bar{z}^{\beta}} v^{\alpha}\bar{v}^{\beta} \leq C_1 \quad \text{and} \quad \frac{1}{2}\langle ((\hat{\nabla} \phi(z))_\circ)^H, (T(z))^H\rangle_{T(z)}=\Big|\frac{\partial \phi}{\partial z^{\alpha}}v^{\alpha}\Big|^2 \leq C_2,$$
where $C_1, C_2$ are constants and independent on $\rho$.

On the other hand, on $\{z \in M| f(z) \geq \varepsilon\},$
\begin{equation*}\begin{split}
\frac{\partial^2 \phi}{\partial z^{\alpha} \partial \bar{z}^{\beta}}v^{\alpha}\bar{v}^{\beta} \leq (2+\rho K) \quad \text{and} \quad \rho(z)=\frac{1}{2}\langle ((\hat{\nabla} \phi(z))_\circ)^H, (T(z))^H\rangle_{T(z)},
\end{split}\end{equation*}
where $v= v^{\alpha}\frac{\partial}{\partial x^\alpha}=\frac{1}{2}(u-\sqrt{-1}Ju)\in T_z^{1,0}M$ is a unit vector and $T=\dot{\gamma}$.
\end{lemma}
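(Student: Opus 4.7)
The plan is to regularize $\rho^2$ near the pole $p$ without touching it outside a small neighborhood, so that Theorems \ref{T-4.1} and \ref{T-4.2} apply directly on the unchanged region. First I would fix a smooth cutoff $\chi:\mathbb{R}\to[0,1]$ with $\chi(t)=0$ for $t\le\varepsilon/2$ and $\chi(t)=1$ for $t\ge\varepsilon$, and define
\begin{equation*}
\phi(z):=\chi(\rho(z))\,\rho^2(z)\quad\text{for }z\ne p,\qquad \phi(p):=0.
\end{equation*}
On the open set $\{z:\rho(z)<\varepsilon/2\}$ the factor $\chi(\rho(z))$ vanishes, so $\phi\equiv 0$ there; in particular $\phi$ is smooth at $p$ and all of its derivatives of every order at $p$ are zero. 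On the complement $\{\rho(z)\ge\varepsilon/2\}$ one is away from the pole, hence $\rho$ and therefore $\phi$ is smooth by Proposition 6.4.2 in \cite{bao}. Thus $\phi$ is globally smooth and matches $\rho^2$ precisely on $\{\rho(z)\ge\varepsilon\}$.

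On $\{z:f(z)\ge\varepsilon\}$, where $\phi\equiv\rho^2$, the Levi-form bound $\partial^2\phi/(\partial z^\alpha\partial\bar z^\beta)\,v^\alpha\bar v^\beta\le 2+\rho K$ is exactly the statement of Theorem \ref{T-4.1}, and the identity $\rho(z)=\tfrac12\langle((\hat\nabla\phi(z))_\circ)^H,(T(z))^H\rangle_{T(z)}$ is Theorem \ref{T-4.2} applied to $\rho^2$. For the complementary region $\{f(z)<\varepsilon\}$, which is contained in a relatively compact set around $p$, I would expand $\phi$, $\partial_\alpha\phi$ and $\partial_\alpha\partial_{\bar\beta}\phi$ by the Leibniz and chain rules, noting that each derivative of $\chi(\rho(z))\rho^2(z)$ that differs from a derivative of $\rho^2$ itself is supported in the annulus $\{\varepsilon/2\le\rho<\varepsilon\}$, where $\rho$ is smooth. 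The Finsler Gauss lemma (Theorem \ref{g}) controls $|d\rho|$ uniformly on this annulus, and the derivatives of $\chi$ are fixed bounded functions of one real variable, so one obtains $C_1,C_2$ depending only on $\varepsilon$, on $\chi$, and on the local geometry of $G$ on the ball of radius $\varepsilon$ around $p$ --- in particular independent of the global value of $\rho$.

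The main obstacle is the lack of $C^2$-regularity of $\rho^2$ at $p$: one cannot freely differentiate $\rho^2$ across the pole. The device of choosing $\chi$ so that $\chi\circ\rho$ vanishes in an entire neighborhood of $p$ bypasses this issue entirely, because it kills the troublesome contribution of $\rho^2$ near $p$ while leaving the formula $\phi=\rho^2$ intact outside. A secondary, milder difficulty is to arrange the constants $C_1,C_2$ to be genuinely independent of $\rho$; this is achieved by observing that every extra derivative introduced by the cutoff is supported on the fixed compact annulus around $p$, on which all the geometric quantities associated with $G$ are \emph{a priori} bounded.
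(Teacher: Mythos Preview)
Your approach is correct and takes a genuinely simpler route than the paper's. Both constructions produce a smooth function that agrees with $\rho^2$ outside a small neighborhood of the pole and has controlled first and second complex derivatives inside it, but the regularization devices differ. You use a single radial cutoff $\chi(\rho)$ that vanishes identically near $p$, so that $\phi=\chi(\rho)\,\rho^2$ is trivially smooth at $p$ with all derivatives zero; the bounds on $\{f<\varepsilon\}$ then follow from smoothness of $\rho$ away from $p$ together with compactness of the closed $\varepsilon$-ball (completeness of $M$ is implied by the existence of a pole). The paper instead adopts the iterative construction of Chen--Yan \cite{cheny}: one builds a sequence $f_{n+1}=H_{n+1}(f_n)(f_n-\varepsilon_n)+\varepsilon_n$ with smooth transition functions $H_n$ and decreasing thresholds $\varepsilon_n$, and sets $\phi=\lim_n f_n$; the derivative bounds are propagated at each step by chain-rule computations that feed Theorems~\ref{T-4.1} and~\ref{T-4.2} back into the estimate. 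What your argument buys is transparency: smoothness at $p$ is immediate and the constants come from a one-line compactness argument rather than from tracking bounds through an infinite iteration. The paper's route has the minor advantage that $C_1,C_2$ are expressed explicitly in terms of the already-established bounds on $\partial\rho^2$ and $\partial\bar\partial\rho^2$, but for the application in Theorem~\ref{T- 6.1a} this refinement is not needed. One small bookkeeping point: the lemma states the threshold in terms of $f=\rho^2$, so to match the region $\{f\ge\varepsilon\}=\{\rho\ge\sqrt{\varepsilon}\}$ exactly you should either place the cutoffs of $\chi$ at $\sqrt{\varepsilon}/2$ and $\sqrt{\varepsilon}$, or simply compose $\chi$ with $\rho^2$ rather than $\rho$; this does not affect the substance of your argument.
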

\begin{proof}
We use the ideas in \cite{cheny} to prove the above lemma.
For any small $\varepsilon$ such that $0<\varepsilon<1$, one can define a smooth function $g_1(t)$ on $\mathbb{R}^1$ as follows:
\begin{equation*}
g_{1}(t)=\left\{\begin{array}{ll}
e^{\frac{1}{(t-\varepsilon)(t+3 \varepsilon)}} & \text { if }\quad t \in(-3 \varepsilon, \varepsilon) \\
0 & \text { otherwise }
\end{array}\right.
\end{equation*}
Let $H_{1}(t)=\int_{-\infty}^{t} g_{1}(s) d s / \int_{-\infty}^{+\infty} g_{1}(s) d s$. Then $H_{1}(t)$ is still a smooth function on $\mathbb{R}^{1}$ and equals 0 if $t \leq-3 \varepsilon$ and equals 1 if $t \geq \varepsilon .$ By Proposition 6.4.2 in \cite{bao} and the fact that $p$ is a pole of $M$, $f:=\rho^2(z)$ is a continuous function on $M$, and it reaches its minimum value $0$ at $p$. Define $f_{1}(z)=H_{1}(f(z))(f(z)-\varepsilon)+\varepsilon .$ Then $f_{1}$ satisfies the following properties:

$i)$ $f_{1}$ is continuous on $M$ and smooth outside $p$, and $f_{1}=f$ if $f \geq \varepsilon$.

$ii)$ For any $0<f(z)<\varepsilon,(f_{1})_{z}$ and $(f_{1})_{\bar{z}}$ have the same sign with $f_{z}$ and $f_{\bar{z}}$, respectively, which means $f_{1}$ also reaches its minimum value at $z=0$. In fact,
$$(f_{1})_{z}=[(H_{1})_{f}(f-\varepsilon)+H_{1}] f_{z},$$
and if $t \in(0, \varepsilon)$, we have  $H_{1}>\frac{3}{4}$ and $(H_{1})_{f}(\varepsilon-t)<\frac{1}{2}$ . It clear holds $t \geq \varepsilon$ since $f_{1}=f$ in this case.

$iii)$ For any $0<f(z)<\varepsilon,$ we have
\begin{equation*}\begin{split}
 \frac{1}{2}\langle ((\hat{\nabla} f_1(z))_\circ)^H, (T(z))^H\rangle_{T(z)}&=\Big|\frac{\partial f_1}{\partial z^{\alpha}}v^{\alpha}\Big|\\
 &=\Big|[(H_{1})_{f}(f-\varepsilon)+H_{1}]\frac{\partial f}{\partial z^{\alpha}}v^{\alpha}\Big|\\
 &\leq M_0\Big|\frac{\partial f}{\partial z^{\alpha}}v^{\alpha}\Big|\\
 & \leq M_0=C_1,
 \end{split}\end{equation*}
 where $M_0$ is the maximum value of the continuous function $|[(H_{1})_{f}(f-\varepsilon)+H_{1}]|$ on $\{f: 0 \leq f\leq 1\}\supset\overline{\{f: 0 <f <\varepsilon\}}$ and $M_0$ is independent on $\rho$ and the last step we used the Theorem \ref{T-4.2}.

 $iv)$ For any $0<f(z)<\varepsilon,$ we have
 \begin{equation*}\begin{split}
 \frac{\partial^2 f_1}{\partial z^{\alpha} \partial \bar{z}^{\beta}}v^{\alpha}\bar{v}^{\beta}&=(H_1)_{ff}(f-\varepsilon)\Big|\frac{\partial^2 f}{\partial z^{\alpha}}v^{\alpha}\Big|^2+
 (H_1)_f(f-\varepsilon)\frac{\partial^2f}{\partial z^{\alpha}\partial \bar{z}^{\beta}}v^\alpha\bar{v}^\beta\\
 &+2(H_1)_f\Big|\frac{\partial f}{\partial z^{\alpha}}v^{\alpha}\Big|^2 +H_1\frac{\partial^2 f}{\partial z^{\alpha} \partial \bar{z}^\beta}v^\alpha \bar{v}^{\beta}\\
 &\leq (M_1+M_3)\Big|\frac{\partial f}{\partial z^{\alpha}}v^{\alpha}\Big|^2+(M_2+M_4)\frac{\partial^2 f}{\partial z^{\alpha} \partial \bar{z}^\beta}v^\alpha \bar{v}^{\beta}\\
 & \leq (M_1+M_3)+(M_2+M_4)(2+K)=C_2,
 \end{split}\end{equation*}
 where $M_1, M_2, M_3, M_4$ are the maximum value of the continuous functions $(H_1)_{ff}(f-\varepsilon), (H_1)_f(f-\varepsilon), 2(H_1)_f, H_1$ on $\{f: 0 \leq f\leq 1\}\supset\overline{\{f: 0 <f <\varepsilon\}}$, respectively and $M_1, M_2, M_3, M_4$ are independent on $\rho$ and the last step we used Theorem \ref{T-4.1} and \ref{T-4.2}.

 Now we define $f_{2}$ from $f_{1}$. Let $\varepsilon_{1}=f_{1}\left(z_{1}\right)$ for any $z_{1}$ satisfying $f\left(z_{1}\right)=\frac{\varepsilon}{2}$. Let
$$
g_{2}(t)=\left\{\begin{array}{ll}
e^{\frac{1}{\left(t-\varepsilon_{1}\right)\left(t+3 \varepsilon_{1}\right)}} & \text { if }\quad t \in\left(-3 \varepsilon_{1}, \varepsilon_{1}\right) \\
0 & \text { otherwise }
\end{array}\right.
$$

 Define $H_{2}(t)=\int_{-\infty}^{t} g_{2}(s) d s / \int_{-\infty}^{+\infty} g_{2}(s) d s$. Then $H_{2}(t)$ is still a smooth function on $\mathbb{R}^{1}$ and equals 0 if $t \leq-3 \varepsilon_{1}$ and equals 1 if $t \geq \varepsilon_{1}$. Define $f_{2}(z)=H_{2}\left(f_{1}(z)\right)\left(f_{1}(z)-\varepsilon_{1}\right)+\varepsilon_{1}$.
Then $f_2$ have similar properties i)-iv). By induction, we can define $f_{n+1}$ from $f_{n}$. Let $\varepsilon_{n}=f_{n}\left(z_{n}\right)$ for any $z_{n}$ satisfying $f\left(z_{n}\right)=\frac{\varepsilon}{2^{n}}$. Let
$$
g_{n+1}(t)=\left\{\begin{array}{ll}
e^{\frac{1}{\left(t-\varepsilon_{n}\right)\left(t+3 \varepsilon_{n}\right)}} & \text { if }\quad t \in\left(-3 \varepsilon_{n}, \varepsilon_{n}\right) \\
0 & \text { otherwise }
\end{array}\right.
$$
Define $H_{n+1}(t)=\int_{-\infty}^{t} g_{n+1}(s) d s / \int_{-\infty}^{+\infty} g_{n+1}(s) d s$. Then $H_{n+1}(t)$ is still a smooth function on $R^{1}$. It equals to $0$ if $t \leq-3 \varepsilon_{n}$ and equals to $1$ if $t \geq \varepsilon_{n}.$ Define $f_{n+1}(z)=$ $H_{n+1}\left(f_{n}(z)\right)\left(f_{n}(z)-\varepsilon_{n}\right)+\varepsilon_{n}.$ Then $f_{n+1}$ have similar properties i)-iv).

Now define $\phi(z) \doteq \lim _{n \rightarrow \infty} f_{n}(z).$ It is easy to check that $\phi$ is well-defined and it is the desired function we want.

By Theorem \ref{T-4.1} and \ref{T-4.2}, on $\{z \in M| f(z) \geq \varepsilon\},$ we have,
\begin{equation*}\begin{split}
\frac{\partial^2 \phi}{\partial z^{\alpha} \partial \bar{z}^{\beta}}v^{\alpha}\bar{v}^{\beta} \leq (2+\rho K) \quad \text{and} \quad \rho(z)=\frac{1}{2}\langle ((\hat{\nabla} \phi(z))_\circ)^H, (T(z))^H\rangle_{T(z)}.
\end{split}\end{equation*}
\end{proof}
For the convenience of proving Theorem \ref{T-1.3}, we rewrite the lemma \ref{L-5.2} into the following lemma.
\begin{lemma}\label{L-5.3}
Suppose that $(M,G)$ is a strongly convex weakly K\"ahler-Finsler manifold with a pole $p$ such that its radial flag curvature is bounded from below by a negative constant $-K^2$. Then for any sufficiently small $\varepsilon>0$, there exists a smooth function $\phi$ on $M$ such that $\phi \equiv \rho^2$ on $\{z \in M| f(z) \geq \varepsilon\},$ and on $M$,
$$\frac{\partial^2 \phi}{\partial z^{\alpha}\partial \bar{z}^{\beta}} v^{\alpha}\bar{v}^{\beta} \leq \max \{C_1, 2+\rho K\} \quad \text{and} \quad \frac{1}{2}\langle ((\hat{\nabla} \phi(z))_\circ)^H, (T(z))^H\rangle_{T(z)}\leq \max \{C_2,\rho\},$$
where $C_1, C_2$ are constants and independent on $\rho$ and $v= v^{\alpha}\frac{\partial}{\partial x^\alpha}=\frac{1}{2}(u-\sqrt{-1}Ju)\in T_z^{1,0}M$ is a unit vector and $T=\dot{\gamma}$.
\end{lemma}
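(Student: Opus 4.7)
The plan is to obtain Lemma 5.3 as an immediate consequence of Lemma 5.2 by merging the two piecewise estimates there into a single uniform bound valid on all of $M$. Since Lemma 5.2 already supplies the smooth function $\phi$ with all the required properties on the two regions $\{f \geq \varepsilon\}$ and $\{f < \varepsilon\}$, no new geometric input is needed here; the task is essentially notational packaging.

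First, I would fix $\varepsilon > 0$ sufficiently small and let $\phi$ be the smooth function on $M$ produced by Lemma 5.2. By construction $\phi \equiv \rho^2$ on $\{f \geq \varepsilon\}$, which is the first clause of the conclusion. Next, I would split $M$ as the disjoint union $\{f \geq \varepsilon\} \sqcup \{f < \varepsilon\}$ and read off from Lemma 5.2 the following bounds: on $\{f \geq \varepsilon\}$ the Levi form satisfies $\frac{\partial^{2}\phi}{\partial z^{\alpha}\partial\bar z^{\beta}}v^{\alpha}\bar v^{\beta} \leq 2+\rho K$, while on $\{f < \varepsilon\}$ it satisfies $\frac{\partial^{2}\phi}{\partial z^{\alpha}\partial\bar z^{\beta}}v^{\alpha}\bar v^{\beta} \leq C_{1}$; similarly, on $\{f \geq \varepsilon\}$ one has $\frac12\langle((\hat\nabla\phi)_{\circ})^{H},T^{H}\rangle_{T} = \rho(z)$ by Theorem \ref{T-4.2}, whereas on $\{f < \varepsilon\}$ this quantity is bounded by $C_{2}$.

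Now on $\{f \geq \varepsilon\}$ we trivially have $2+\rho K \leq \max\{C_{1},2+\rho K\}$ and $\rho \leq \max\{C_{2},\rho\}$; on $\{f < \varepsilon\}$ we have $C_{1} \leq \max\{C_{1},2+\rho K\}$ and $C_{2} \leq \max\{C_{2},\rho\}$. Therefore both inequalities in the statement of Lemma 5.3 hold pointwise on all of $M$, with the same $\phi$ and the same constants $C_{1},C_{2}$ delivered by Lemma 5.2, which are independent of $\rho$. This completes the plan.

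The only genuinely technical step — the construction of a globally smooth $\phi$ that agrees with $\rho^{2}$ outside a small neighborhood of the pole $p$ and has controlled derivatives near $p$, despite the failure of $\rho^{2}$ to be $C^{2}$ at $p$ (Proposition 6.4.2 of \cite{bao}) — is already carried out in Lemma 5.2 via the iterative mollification using the bump functions $g_{n}$ and cutoffs $H_{n}$. So there is no new obstacle here; the present lemma is a direct corollary stated in the form that will be convenient for applying in the proof of Theorem \ref{T-1.3}.
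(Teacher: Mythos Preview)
Your proposal is correct and matches the paper's approach exactly: the paper explicitly introduces Lemma~\ref{L-5.3} with the sentence ``For the convenience of proving Theorem~\ref{T-1.3}, we rewrite the lemma~\ref{L-5.2} into the following lemma'' and gives no separate proof. The content is, as you say, just the piecewise bounds from Lemma~\ref{L-5.2} combined via a maximum.
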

\section{The proof of Theorem \ref{T-1.3}}
\setcounter{equation}{0}
\noindent
\par In this section, we prove the following main theorem (i.e., Theorem \ref{T-1.3}) of this paper. Firstly, we recall Theorem \ref{T-1.3} here for convenience.
\begin{theorem}\label{T- 6.1a}
Suppose that $(M,G)$ is a strongly convex weakly K$\ddot{a}$hler-Finsler manifold with a pole $p$ such that its radial flag curvature is bounded from below and its holomorphic sectional curvature $K_G$ is bounded from below by a constant $K_1 \leq 0$. Suppose that $(N,H)$ is a strongly pseudoconvex complex Finsler manifold with holomorphic sectional curvature $K_H$ bounded from above by a constant $K_2<0$. Let $f:M \rightarrow N$ be a holomorphic mapping. Then
\begin{equation}\label{EQ-S}
(f^*H)(z;dz) \leq \frac{K_1}{K_2}G(z;dz).
\end{equation}
\end{theorem}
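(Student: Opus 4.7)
The strategy is the Ahlfors--Yau maximum principle applied to the homogeneous-of-degree-zero ratio
\[
u(z,v):=\frac{H(f(z);f_\ast v)}{G(z;v)},\qquad (z,v)\in T^{1,0}M\setminus\{0\},
\]
with the goal of proving $u\leq K_1/K_2$ pointwise. Fix an arbitrary $z_\ast\in M$ and a unit vector $v_\ast\in T^{1,0}_{z_\ast}M$; I will bound $u(z_\ast,v_\ast)$. For each large $R$, let $\phi$ be the smoothed squared-distance from Lemma~\ref{L-5.3}, with $\varepsilon$ chosen so small that $\phi\equiv\rho^2$ in a neighborhood of $z_\ast$, and introduce the auxiliary function
\[
\Psi_R(z,v):=u(z,v)\,(R^2-\phi(z))
\]
on the compact set $\{(z,v):z\in\overline{B_R(p)},\,G(z;v)=1\}$. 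Since $\Psi_R$ vanishes on $\partial B_R(p)$, it attains its maximum at an interior pair $(z_0,v_0)$ (assuming $u\not\equiv 0$, otherwise the result is trivial).

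\textbf{Reduction to a Riemann surface.} By Lemma~\ref{L-2.1} pick a complex curve $S\subset M$ through $z_0$ and tangent to $v_0$, whose induced Hermitian metric $G|_S$ has Gaussian curvature at $z_0$ equal to $K_G(v_0)\geq K_1$. Since holomorphic pullback preserves Gaussian curvature, the pullback metric $(f|_S)^\ast H$ has Gaussian curvature at $z_0$ equal to that of $H|_{f(S)}$ at $f(z_0)$, which by Lemma~\ref{L-2.1} applied at $f(z_0)$ is at most $K_H(f_\ast v_0)\leq K_2$. Writing $G|_S=g\,|d\zeta|^2$ and $(f|_S)^\ast H=h\,|d\zeta|^2$ in a local holomorphic coordinate $\zeta$ on $S$, so that $u|_S=h/g$, the classical Ahlfors identity
\[
\Delta_{G|_S}\log u|_S=2\bigl(K(G|_S)-K((f|_S)^\ast H)\,u|_S\bigr)
\]
gives, at $z_0$, the pointwise lower bound $\Delta_{G|_S}\log u|_S(z_0)\geq 2K_1-2K_2\,u(z_0,v_0)$.

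\textbf{Maximum principle with the auxiliary factor.} Extend $v_0$ to a local nonvanishing holomorphic section $v(z)$ of $T^{1,0}S$; the restriction $\Psi_R|_S(z)=u(z,v(z))(R^2-\phi(z))$ attains its maximum at $z_0$, whence $\Delta_{G|_S}\log\Psi_R|_S(z_0)\leq 0$. A direct computation yields
\[
\partial\bar\partial\log(R^2-\phi)=-\frac{\partial\bar\partial\phi}{R^2-\phi}-\frac{\partial\phi\wedge\bar\partial\phi}{(R^2-\phi)^2},
\]
and evaluation on $(v_0,\bar v_0)$, together with $\phi_{\alpha\bar\beta}v_0^\alpha\bar v_0^\beta\leq 2+\rho(z_0)K$ from Theorem~\ref{T-4.1} and the gradient bound from Lemma~\ref{L-5.3}, produces a correction
\[
\mathcal{E}(R,z_0)\leq\frac{4(2+\rho(z_0)K)}{R^2-\phi(z_0)}+\frac{4\max\{C_2,\rho(z_0)\}}{(R^2-\phi(z_0))^2}.
\]
Combining these inequalities yields $0\geq 2K_1-2K_2\,u(z_0,v_0)-\mathcal{E}(R,z_0)$, so $u(z_0,v_0)\leq K_1/K_2+\mathcal{E}(R,z_0)/(-2K_2)$. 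Finally, from $\Psi_R(z_\ast,v_\ast)\leq \Psi_R(z_0,v_0)$,
\[
u(z_\ast,v_\ast)\leq\Bigl(\frac{K_1}{K_2}+\frac{\mathcal{E}(R,z_0)}{-2K_2}\Bigr)\cdot\frac{R^2-\phi(z_0)}{R^2-\phi(z_\ast)},
\]
and letting $R\to\infty$, with the factor $(R^2-\phi)$ replaced if necessary by $(R^2-\phi)^\alpha$ for a small $\alpha>0$ in order to keep $\rho(z_0)/R$ controlled (a standard Yau-type refinement), gives $u(z_\ast,v_\ast)\leq K_1/K_2$.

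\textbf{Main obstacle.} The principal difficulty is that $G$ is not Hermitian-quadratic, so $\log G$ carries no intrinsic complex Laplacian on $M$: one cannot simply compute $\Delta\log u$ globally. This is resolved by Lemma~\ref{L-2.1}, which at every candidate maximum realizes the Finsler holomorphic sectional curvature as the Gaussian curvature of an osculating complex curve $S$, thereby reducing the argument locally to a classical Ahlfors--Schwarz computation on a Riemann surface. A secondary difficulty is the lack of smoothness of $\rho$ at the pole $p$, which is handled by the smoothing of Lemma~\ref{L-5.3}; crucially, the weakly K\"ahler-Finsler hypothesis enters through Lemma~\ref{v}, which converts the real Hessian estimate of Corollary~\ref{cu} into the Levi-form estimate of Theorem~\ref{T-4.1} used in the third paragraph. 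Checking that $\rho(z_0)/R$ remains suitably controlled as $R\to\infty$ is the final technical point, handled by the aforementioned refinement of the auxiliary factor.
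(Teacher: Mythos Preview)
Your overall strategy is sound and closely parallels the paper's, but the organization is genuinely different. The paper never works on the sphere bundle of $M$; instead it fixes an arbitrary holomorphic disk $\varphi:\Delta\to M$, pulls everything back to $\Delta$, and uses the auxiliary function
\[
\Phi(\zeta)=\bigl[a^2-\phi(\varphi(\zeta))\bigr]^2\bigl[b^2-\varrho^2(\zeta)\bigr]^2\frac{\sigma^2(\zeta)}{\lambda^2(\zeta)}
\]
with \emph{two} squared cut-offs (one coming from the distance on $M$, one from the Poincar\'e distance on $\Delta$), then sends $a,b\to\infty$. Your version locates the maximum $(z_0,v_0)$ on the sphere bundle first and \emph{then} selects, via Lemma~\ref{L-2.1}, a single complex curve $S\subset M$ realizing $K_G(v_0)$; this is closer in spirit to Royden's argument and has the advantage that the lower bound $K(G|_S)(z_0)=K_G(v_0)\ge K_1$ is built into the choice of $S$, whereas in the paper's setup the corresponding inequality for $\lambda^2$ has to hold for an \emph{a priori} arbitrary $\varphi$. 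Both routes feed the same estimates (Theorem~\ref{T-4.1}, Lemma~\ref{L-5.3}) into the same Ahlfors identity, so neither is essentially more elementary.

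There is one genuine slip. In your last step you write that the cut-off $(R^2-\phi)$ may be ``replaced if necessary by $(R^2-\phi)^\alpha$ for a small $\alpha>0$''. This is the wrong direction: with exponent $1$ the gradient term $\dfrac{|\partial\phi\cdot v_0|^2}{(R^2-\phi(z_0))^2}$ contributes $B^2(R^2-\phi(z_0))^{-1}$ after multiplying through, and since $B\sim\rho(z_0)\le R$ while $R^2-\phi(z_0)$ may be arbitrarily small, this term need not vanish in the limit. One needs $\alpha\ge 2$; the paper uses exactly the square $[a^2-\phi]^2$, which after dividing by $a^4$ makes every error term $O(a^{-1})$. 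Replace ``small $\alpha$'' by $\alpha=2$ and your argument goes through; the remaining claim that $\rho(z_0)/R$ stays controlled is then unnecessary, because the relevant quantity is $\mathcal{E}(R,z_0)(R^2-\phi(z_0))^2/R^4=O(R^{-1})$ uniformly in $z_0$. (Also, the gradient term in your $\mathcal{E}$ should carry a square: $|\phi_\alpha v_0^\alpha|^2$, not $|\phi_\alpha v_0^\alpha|$.)
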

\begin{proof}
The key point of the proof is the construction of the auxiliary function \eqref{aaf} and then using maximum principle. This is essentially different from the case when $G$ is a Hermitian quadratic metric (e.g. K\"ahler metric or Hermitian metric) on $M$ since the square of distance function $\rho^2(z)$ associated to $G$ is smooth over the whole $M$ if $G$ is a Hermitian metric while $\rho^2(z)$ is only smooth over $M-\{p\}$ if $G$ is a non-Hermitian quadratic metric and $p$ is a pole of $M$ (see Proposition 6.4.2 in \cite{bao}). In the non-Hermitian quadratic case we need the function $\phi$ in Lemma \ref{L-5.3} to overcome this inconvenience.

Let $B_a(p)$ be a closed geodesic ball in $(M,G)$ with its center at $p$ and radius of $a\in(0,+\infty)$. Let $\Delta$ be a unit disk with Poincar\'e metric $P$.
And let $B_b$ be a closed geodesic ball in $(\Delta, P)$ with its center at $0$ and radius of $b\in(0,+\infty)$. We denote the distance function from $0$ to $\zeta$ on $\Delta$ by $\varrho(\zeta)$, and the distance function from $p$ to $z$ on $M$ by $\rho(z)$. Suppose that $\varphi$ is any holomorphic mapping from $\Delta$ into $M$ such that $\varphi(B_b) \subset B_a(p)$ and $\varphi(0)=p$.

The pull-back metric on $\Delta$ of $G$ on $M$ by the holomorphic mapping $\varphi:\Delta\rightarrow M$ is given by
 \begin{equation*}
 (\varphi^*G)(\zeta)= \lambda^2(\zeta)d\zeta d\bar{\zeta}.
 \end{equation*}
 Here we have denoted
 \begin{equation*}
 \lambda^2(\zeta):=G(\varphi(\zeta);\varphi'(\zeta)).
 \end{equation*}

 Note that since $G(\varphi'(\zeta))\equiv1$, we have $\varphi'(\zeta)\neq 0$. Thus $\lambda(\zeta) >0$.
 Now let
 \begin{equation*}
 (f\circ \varphi)^*H(\zeta) =\sigma^2(\zeta) d\zeta d\bar{\zeta}
 \end{equation*}
 be the pull-back metric on $\Delta$ of $H$ on $N$ by the holomorphic mapping $f\circ \varphi:\Delta\rightarrow N$. Here, we have denoted
 \begin{equation*}
 \sigma^2(\zeta):= H((f\circ\varphi)(\zeta);(f\circ\varphi)'(\zeta)).
 \end{equation*}

 i) If $(f\circ\varphi)'(\zeta)= 0$, then \eqref{EQ-S} holds obviously with $\sigma(0)=0$.

 ii)  Suppose that $(f\circ\varphi)'(\zeta)\neq 0$.

 We define the following auxiliary function:
\begin{equation}
\Phi(\zeta):=[a^2-\phi(\varphi(\zeta))]^2[b^2-\varrho^2(\zeta)]^2\frac{\sigma^2(\zeta)}{\lambda^2(\zeta)},\label{aaf}
\end{equation}
where $\phi$ is defined in Lemma \ref{L-5.3} for a function $\rho^2$.
It is clear that $\Phi(\zeta)\geq 0$ for any $\zeta\in B_b$.

 From Lemma \ref{L-5.3}, it follows that  $\phi(z)$ is a smooth function on $M$. Thus the function $\Phi(\zeta)$ defined by \eqref{aaf} is  smooth for $\zeta\in B_b$. Moreover, $\Phi(\zeta)$ attains its maximum at some interior point $\zeta=\zeta_0\in B_b$ since $\Phi(\zeta) \rightarrow 0$ as $\varrho(\zeta)\rightarrow b$, or equivalently $\zeta$ tends to the boundary  $\partial B_b$ of $B_b$. Thus it suffice for us to seek an upper bound of $\Phi(\zeta_0)$ for an arbitrary $\varphi(\zeta)$ satisfying $(f\circ\varphi)'(\zeta)\neq 0$. We want to use the maximum principle.

 In the following, in order to abbreviate expression of formulas. We denote $\varphi(\zeta)=z=(z^1,\cdots,z^n),\varphi(\zeta_0)=z_0=(z_0^1,\cdots,z_0^n)$, $\varphi'(\zeta)=v=(v^1,\cdots,v^n)$ and $\varphi'(\zeta_0)=v_0=(v_0^1,\cdots,v_0^n)$.

Since $\Phi(\zeta)$ is smooth for $\zeta\in B_b$ and attains its maximum at the interior point $\zeta_0\in B_b$, it necessary that at the point $\zeta=\zeta_0$:
\begin{equation}
0=\frac{\partial}{\partial \zeta} \log \Phi(\zeta)\quad\mbox{and}\quad
0 \geq \frac{\partial^2}{\partial \zeta \partial \bar{\zeta}} \log\Phi(\zeta).\label{sod}
\end{equation}

Substituting \eqref{aaf} in the second inequality in \eqref{sod}, we have
\begin{equation}\begin{split}\label{EQ-6.2}
0 \geq &2\frac{\partial^2}{\partial \zeta \partial \bar{\zeta}}\log[a^2-\phi(z)]
+\frac{\partial^2 }{\partial \zeta \partial \bar{\zeta}}\log \sigma^2(\zeta)-\frac{\partial^2 }{\partial \zeta \partial \bar{\zeta}}\log\lambda^2(\zeta)+2\frac{\partial^2 }{\partial \zeta \partial \bar{\zeta}}\log [b^2-\varrho^2(\zeta)]\\
=&-2[a^2-\phi(z)]^{-1}\frac{\partial^2\phi(z)}{\partial z^\alpha\partial\bar{z}^\beta}v^{\alpha}\bar{v}^{\beta}-2[a^2-\phi(z)]^{-2}\Big|\frac{\partial\phi(z)}{\partial z^\alpha}v^{\alpha}\Big|^2\\
&+\frac{\partial^2 }{\partial \zeta \partial \bar{\zeta}}\log \sigma^2(\zeta)-\frac{\partial^2 }{\partial \zeta \partial \bar{\zeta}}\log \lambda^2(\zeta)\\
&-2[b^2-\varrho^2(\zeta)]^{-1}\frac{\partial^2 \varrho^2(\zeta)}{\partial \zeta \partial \bar{\zeta}}-2[b^2-\varrho^2(\zeta)]^{-2}\Big|\frac{\partial \varrho^2(\zeta)}{\partial \zeta}\Big|^2.
\end{split}\end{equation}
By Lemma \ref{L-2.1} and the curvature assumptions of $G$ and $H$ in Theorem \ref{T- 6.1a}, we have
\begin{equation}
\frac{\partial^2 }{\partial \zeta \partial \bar{\zeta}}\log \sigma^2(\zeta)\geq -2K_2\sigma^2(\zeta),\quad \frac{\partial^2 }{\partial \zeta \partial \bar{\zeta}}\log \lambda^2(\zeta)\leq -2K_1\lambda^2(\zeta).\label{cc1}
\end{equation}
By Lemma \ref{L-5.3}, we have
\begin{equation}\begin{split}\label{EQ-6.3}
\frac{\partial^2\phi(z)}{\partial z^\alpha\partial \bar{z}^\beta}v^{\alpha}\bar{v}^{\beta}
 \leq \max\{(2+\rho K),C_1\}
\leq \max\{(2+aK),C_1\}:=A,
\end{split}\end{equation}
where in the last step we used the inequality $\rho(z) \leq a$.

By the Remark \ref{R-4.3}, at $\zeta=\zeta_0$, we have
\begin{equation}\label{A-7}
\frac{\partial^2 \varrho^2(\zeta)}{\partial \zeta \partial \bar{\zeta}} \leq 2[1+2\varrho(\zeta_0)]\leq 2(1+2b),
\end{equation}
since $\varrho(\zeta_0)\leq b$.

In order to estimate the first order term of $\phi(z)$ and $\varrho^2(\zeta)$ in \eqref{EQ-6.2}, we use normal coordinates. Since $M$ is a strongly convex weakly K\"ahler-Finsler manifold, we can choose coordinates around $(z_0,v_0)$ such that at the point $(z_0,v_0)$, we have
$$G_{\alpha \bar{\beta}}(z_0,v_0)=\delta_{\alpha \beta}, \quad 1 \leq \alpha, \beta \leq n.$$

Thus by Lemma \ref{L-5.3},  at $\zeta=\zeta_0$, we have
\begin{equation}\begin{split}\label{EQ-6.4}
\Big|\frac{\partial\phi(z_0)}{\partial z^\alpha}v_0^{\alpha}\Big|
&\leq |\langle (\hat{\nabla}(a^2-\phi(z_0)))_\circ,T(z_0)\rangle|\\
&=|\langle (\hat{\nabla}(\phi(z_0)))_\circ,T(z_0)\rangle|=\max\{2\rho(z_0),C_2\} \\
&\leq \max\{2a,C_2\}:=B,
\end{split}\end{equation}
where the last step we used the fact that $\rho(z_0) \leq a$.

For the same reasons as \eqref{EQ-6.4}, at $\zeta=\zeta_0$, that is Remark \ref{R-4.4}, we have
\begin{equation}\label{A-8}
\Big|\frac{\partial \varrho^2(\zeta)}{\partial \zeta}\Big| \leq 2b.
\end{equation}

Substituting \eqref{cc1}, \eqref{EQ-6.3}, \eqref{A-7}, \eqref{EQ-6.4},  \eqref{A-8} into \eqref{EQ-6.2}, we have (at $\zeta=\zeta_0$),
\begin{equation*}
0 \geq -K_2\sigma^2(\zeta_0)+K_1\lambda^2(\zeta_0)-\frac{A}{2(a^2-\phi(z_0))}-\frac{B^2}{[a^2-\phi(z_0)]^2}-
\frac{2(1+2b)}{b^2-\varrho^2(\zeta_0)}-\frac{4b^2}{[b^2-\varrho^2(\zeta_0)]^2}.
\end{equation*}
Rearranging terms, we get
\begin{equation*}\begin{split}
&\frac{1}{\lambda^2(\zeta_0)}\Big\{A[a^2-\phi(z_0)][b^2-\varrho^2(\zeta_0)]^2+B^2[b^2-\varrho^2(\zeta_0)]^2\\
&+2(1+2b)[b^2-\varrho^2(\zeta_0)][a^2-\phi(z_0)]^2+4b^2[a^2-\phi(z_0)]^2\Big\}-K_1[a^2-\phi(z_0)]^2[b^2-\varrho^2(\zeta_0)]^2 \\
&\geq -K_2\frac{\sigma^2(\zeta_0)}{\lambda^2(\zeta_0)}[a^2-\phi(z_0)]^2[b^2-\varrho^2(\zeta_0)]^2=-K_2\Phi(\zeta_0)\\
&\geq-K_2 \Phi(\zeta)=-K_2[a^2-\phi(\varphi(\zeta))]^2[b^2-\varrho^2(\zeta)]^2\frac{\sigma^2(\zeta)}{\lambda^2(\zeta)}
\end{split}\end{equation*}
for any $\zeta\in B_b$. Now divided by $a^4b^4$ on both side of the above inequality and then letting $a \rightarrow +\infty$ and $b \rightarrow +\infty$, respectively, we obtain
$$\frac{\sigma^2(\zeta)}{\lambda^2(\zeta)} \leq \frac{K_1}{K_2}$$
for any holomorphic mapping $\varphi$ from $\Delta$ into $M$ satisfying $(f\circ\varphi)'(\zeta)\neq 0$.\\
This completes the proof.
\end{proof}
\begin{theorem}\label{T-6.1b}
Suppose that $(M,G)$ is a strongly complete convex weakly K$\ddot{a}$hler-Finsler manifold such that its radial flag curvature is bounded from below and its holomorphic sectional curvature $K_G$ is bounded from below by a constant $K_1 \leq 0$. Suppose that $(N,H)$ is a strongly pseudoconvex complex Finsler manifold with holomorphic sectional curvature $K_H$ bounded from above by a constant $K_2<0$. Let $f:M \rightarrow N$ be a holomorphic mapping. Then
\begin{equation}\label{EQ-S}
(f^*H)(z;dz) \leq \frac{K_1}{K_2}G(z;dz).
\end{equation}
\end{theorem}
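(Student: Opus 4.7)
The plan is to adapt the proof of Theorem \ref{T- 6.1a} so that the pole assumption is replaced by completeness. The auxiliary function \eqref{aaf} and the maximum principle machinery remain the same; what must be modified is the construction of the barrier, since the distance function from an arbitrary base point need not be smooth on the cut locus.

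First I would fix an arbitrary base point $z_0 \in M$ and an arbitrary nonzero vector $v_0 \in T_{z_0}^{1,0}M$; it suffices to prove the Schwarz inequality at $(z_0, v_0)$. Choose a holomorphic map $\varphi:\Delta \to M$ with $\varphi(0) = z_0$ and $\varphi'(0) = v_0$ (which exists by composing a linear map with a local holomorphic chart), and introduce
$$\Phi(\zeta) = [a^2 - \phi(\varphi(\zeta))]^2 [b^2 - \varrho^2(\zeta)]^2 \frac{\sigma^2(\zeta)}{\lambda^2(\zeta)}$$
exactly as in \eqref{aaf}, with $\rho(z)=d(z_0,z)$ in place of the distance from the pole. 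Completeness of $(M,G)$ together with Hopf--Rinow guarantees that $\overline{B_a(z_0)}$ is compact and $\rho$ is a proper continuous function on $M$, so the constraint $\varphi(B_b) \subset B_a(z_0)$ can be imposed for all sufficiently large $a$ and $\Phi$ attains its maximum at some interior point $\zeta_0 \in B_b$.

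The main obstacle is that without a pole $\rho$ may fail to be smooth at $\varphi(\zeta_0)$ whenever the latter lies in the cut locus of $z_0$, so Lemma \ref{L-5.3} and Theorems \ref{T-4.1}, \ref{T-4.2} cannot be invoked directly. To bypass this I would apply Calabi's trick: pick a point $z_0^\varepsilon$ at distance $\varepsilon$ from $z_0$ along some minimizing geodesic joining $z_0$ to $\varphi(\zeta_0)$, and replace $\rho$ by the barrier $\tilde{\rho}(z):=d(z_0^\varepsilon,z)+\varepsilon$. For sufficiently small $\varepsilon>0$, the point $\varphi(\zeta_0)$ is not a cut point of $z_0^\varepsilon$, so $\tilde{\rho}$ is smooth in a neighborhood of $\varphi(\zeta_0)$, satisfies $\tilde{\rho} \geq \rho$ nearby with equality at $\varphi(\zeta_0)$, and obeys the same Hessian-type bounds of Theorems \ref{T-4.1} and \ref{T-4.2} along the unique minimizing geodesic from $z_0^\varepsilon$ to $\varphi(\zeta_0)$, since those inequalities are derived purely from a single radial geodesic and the lower bound $-K^2$ on the radial flag curvature. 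Substituting the smoothing of $\tilde{\rho}^2$ from Lemma \ref{L-5.3} for $\phi$ in $\Phi$, the maximum-principle computation of Theorem \ref{T- 6.1a} at $\zeta_0$ goes through unchanged. Letting $\varepsilon \to 0$ and then $a,b \to \infty$ yields $\sigma^2(\zeta)/\lambda^2(\zeta) \leq K_1/K_2$ along the image of $\varphi$; evaluating at $\zeta=0$ gives the required pointwise inequality at $(z_0,v_0)$, and arbitrariness of the base point $z_0$ and direction $v_0$ completes the proof.
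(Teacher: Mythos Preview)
Your proposal is correct and follows essentially the same route as the paper: reduce to the auxiliary-function / maximum-principle argument of Theorem \ref{T- 6.1a}, and handle the possible non-smoothness of $\rho$ at the maximum point via Calabi's barrier trick, replacing $\rho$ by $\tilde\rho(z)=\varepsilon+d(z_0^\varepsilon,z)$ so that $\tilde\rho\ge\rho$ with equality at $\varphi(\zeta_0)$ and $\tilde\rho$ is smooth there. The only cosmetic difference is where the auxiliary base point is placed: you move the center a distance $\varepsilon$ toward the maximum point, whereas the paper moves it just past the first cut point of $\varphi(\zeta_0)$ along the reversed geodesic; both choices guarantee $\varphi(\zeta_0)\notin\mathrm{Cut}$ of the new base point (your version uses the reversibility built into Definition \ref{d-2.1}(iii) and the symmetry of cut loci), and the rest of the estimate proceeds identically.
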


\begin{proof}
If $(M,G)$ is a strongly complete weakly K\"ahler-Finsler manifold without cut points, then the Lemma \ref{L-5.3} previously used in the proof of Theorem \ref{T-1.3} still hold.

 If $(M,G)$ is a strongly complete weakly K\"ahler-Finsler manifold with cut points. The proof essentially goes the same lines as in Chen-Cheng-Lu \cite{chen}. With the notations of Theorem \ref{T- 6.1a} and its proof. We use the notations in Theorem $6.1$ and its proof. Let $p$ be an arbitrary point which is not a cut point and $z_{0} \in M$ at which $\Phi(\zeta)$ attains its maximum value. That is, $\varphi\left(\zeta_{0}\right)=z_{0}$. Since $\left(M, d s_{M}^{2}\right)$ is complete, thus Hopf-Rinow theorem for the Finsler metric, there exists a minimizing geodesic $\gamma:[0,1] \rightarrow M$ joining $p$ and $z_{0}$ such that $\gamma(0)=p$ and $\gamma(1)=z_{0} .$ If there is a $t_{0} \in(0,1)$ such that $\gamma\left(t_{0}\right)=p_{1}$ is first cut point to the point $z_{0}$ along the inversely directed geodesic $\gamma_{1}=\gamma(1-t)$ for all $t \in[0,1] .$ Let $\varepsilon>0$ be a given sufficiently small number such that $t_0+\varepsilon<1$, then it clear that $z_{0}$ is not a cut point of $\rho\left(t_{0}+\varepsilon\right)$ with respect to the geodesic $\gamma_{1}$. Define $\tilde{\rho}(p, z):=\rho\left(p, \gamma\left(t_{0}+\varepsilon\right)\right)+\rho\left(\gamma\left(t_{0}+\varepsilon\right), z\right)$. Then using
the triangle inequality, we have
$$
\rho(p, z) \leqslant \tilde{\rho}(p, z) \quad \text { and } \quad \rho\left(p, z_{0}\right)=\tilde{\rho}\left(p, z_{0}\right) .
$$
So that
$$
\Phi_1(\zeta)=\left[a^{2}-\phi_1(\varphi(\zeta))\right]^{2}\left[b^{2}-\varrho^{2}(\zeta)\right]^{2} \frac{\sigma^{2}(\zeta)}{\lambda^{2}(\zeta)}
$$
is smooth at the point $\zeta_{0}$ and we have
$$
\Phi_1(\zeta)=\left[a^{2}-\phi_1(\varphi(\zeta))\right]^{2}\left[b^{2}-\varrho^{2}(\zeta)\right]^{2} \frac{\sigma^{2}(\zeta)}{\lambda^{2}(\zeta)} \leqslant \Phi(\zeta)
$$
and
$$
\Phi_1(\zeta_0)=\left[a^{2}-\phi_1\left(\varphi\left(\zeta_{0}\right)\right)\right]^{2}\left[b^{2}-\varrho^{2}\left(\zeta_{0}\right)\right]^{2} \frac{\sigma^{2}\left(\zeta_{0}\right)}{\lambda^{2}\left(\zeta_{0}\right)}=\Phi\left(\zeta_{0}\right),
$$
where $\phi_1$ is defined in Lemma \ref{L-5.2} for a function $\tilde{\rho}^2$  near $z_0$ . Now by passing the discussion of $\tilde{\rho}(p, z)$ to $\rho\left(\gamma\left(t_{0}+\varepsilon\right), z\right)$, the remaining proof goes the same lines as Theorem \ref{T- 6.1a}. This completes the proof.
\end{proof}
\section{Applications of Theorem \ref{T-1.3}}
\noindent

In this section, we give some applications of Theorem \ref{T-1.3}. In \cite{kobayashi1,kobayashi2}, Kobayashi introduced lots of results about holomorphic mappings between complex manifolds, including various types of  Schwarz lemmas. By Theorem \ref{T-1.3}, we obtain the following theorem.
\begin{corollary}\label{C-6.1}
Suppose $(M,G)$ is a strongly convex weakly K\"ahler-Finsler manifold with a pole $p$ such that its holomorphic sectional curvature is non-negative and its radial flag curvature is bounded from below. Suppose that $(N,H)$ is a strongly pseudoconvex complex Finsler manifold such that its holomorphic sectional curvature is bounded from above by a constant $K_2 <0$. Then any holomorphic mapping $f$ from $M$ into $N$ is a constant.
\end{corollary}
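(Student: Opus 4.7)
The plan is to invoke Theorem \ref{T-1.3} (equivalently, the restated Theorem 6.2) with the choice $K_1 = 0$. The hypothesis that the holomorphic sectional curvature $K_G$ of $(M,G)$ is non-negative is equivalent to saying that $K_G$ is bounded from below by the constant $K_1 = 0 \leq 0$, which satisfies the sign requirement of Theorem \ref{T-1.3}. The radial flag curvature lower bound is part of our assumption, and the existence of a pole $p$ forces $(M,G)$ to be geodesically complete (since $\exp_p : T_pM \to M$ is an $E$-diffeomorphism, every geodesic emanating from $p$ extends indefinitely, and together with the pole property this yields completeness). Hence every hypothesis of Theorem \ref{T-1.3} is verified.

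Applying Theorem \ref{T-1.3} directly, we obtain, for every holomorphic map $f : M \to N$,
\begin{equation*}
(f^*H)(z;dz) \;\leq\; \frac{K_1}{K_2}\, G(z;dz) \;=\; \frac{0}{K_2}\, G(z;dz) \;=\; 0,
\end{equation*}
for all $z \in M$ and $dz \in T_z^{1,0}M$. Since $H$ is a strongly pseudoconvex complex Finsler metric on $N$, Definition \ref{D-4.1} together with item (ii) of Definition \ref{D-2.3} gives $H(w;\xi) \geq 0$ with equality if and only if $\xi = 0$. Combining this with the inequality above forces $H(f(z); f_*(dz)) = 0$ for every $dz$, whence $f_*(dz) = 0$ for every $dz \in T_z^{1,0}M$ and every $z \in M$.

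Therefore the holomorphic differential $df$ vanishes identically on $M$. Since the existence of a pole $p$ in particular implies that $M$ is connected (being the image of the connected space $T_pM$ under the continuous, surjective map $\exp_p$), we conclude that $f$ is a constant map from $M$ into $N$. There is no genuine obstacle here: once Theorem \ref{T-1.3} is in hand, the corollary reduces to the elementary observation that the pull-back of a positive definite Finsler norm by a map with vanishing differential is zero, and this forces the map to be locally, and hence globally, constant on the connected manifold $M$.
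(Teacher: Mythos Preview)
Your proof is correct and follows exactly the approach implicit in the paper: Corollary~\ref{C-6.1} is stated there as an immediate application of Theorem~\ref{T-1.3} with $K_1=0$, and the paper gives no further argument beyond that. Your added remarks that a pole forces completeness (noted in the paper before Proposition~\ref{W}) and that $M$ is connected are the natural details needed to make the deduction explicit.
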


\par Now we recall the concept of a complex Minkowski space.
\begin{definition}(\cite{abate})
A complex Minkowski space is $\mathbb{C}^n$ endowed with a complex Finsler metric $G: \mathbb{C}^n \times \mathbb{C}^n\cong T^{1,0}\mathbb{C}^n \rightarrow [0,+ \infty)$ given by
$$ \forall p \in \mathbb{C}^n, \quad \forall v \in T^{1,0}\mathbb{C}^n \cong \mathbb{C}^n, \quad G(p;v) = ||v||^2,$$
where $\|\cdot\|: \mathbb{C}^n \rightarrow [0,+\infty)$ is a complex norm (with strongly convex unit ball) on $\mathbb{C}^n$.
\end{definition}
\begin{remark}
If $\|\cdot\|$ is not the norm associated to a Hermitian inner product, then $F$ does not come from a Hermitian metric.
\end{remark}

Let $(\mathbb{C}^n, G)$ be a complex Minkowski space, then
$$\Gamma_{\beta;\mu}^{\alpha}=0.$$
 Thus a complex Minkowski space is necessary a K\"ahler-Finsler manifold. It is obvious that a complex Minkowski space is also a real Minkowski space so that its horizontal flag curvature vanishes identically. By the definition of holomorphic sectional curvature, it follows that the holomorphic sectional curvature of a complex Minkowski space vanishes identically. In the following, we prove that $\exp_p$ is an $E$-diffeomorphism at the origin. Now we introduce the Cartan-Hadamard theorem in a real Finsler manifold.
\begin{theorem}(\cite{abate})\label{T-6}
Let $(M,G)$ be a complete real Finsler manifold, and fix $p \in M$. Assume that the Morse index form $I_0^r$ is positive definite on $\mathcal{X}_0[0,r]$ for all $r>0$ and along every radial normal geodesic issuing from $p$ (e.g., assume that the horizontal flag curvature is non-positive.) Then  $\exp_p: T_pM \rightarrow M$ is a covering map, smooth outside the origin. In particular, if $(M,G)$ is simply connected then $\exp_p$ is an $E$-diffeomorphism at the origin.
\end{theorem}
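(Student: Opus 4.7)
The plan is to follow the classical Cartan--Hadamard strategy, adapted to the real Finsler setting. The argument naturally splits into three stages: (i) no conjugate points along geodesics issuing from $p$; (ii) $\exp_p$ is a covering map; (iii) in the simply connected case, $\exp_p$ is a global $E$-diffeomorphism.

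First, I would use the positive definiteness of the Morse index form along radial normal geodesics from $p$ to rule out conjugate points. Recall that $q=\exp_p(v)$ is conjugate to $p$ along $\gamma(t)=\exp_p(tv)$ precisely when there is a nontrivial Jacobi field $J$ along $\gamma$ vanishing at both endpoints. Since such $J$ satisfies $\langle J^H|T^H\rangle_T\equiv 0$ by the Finsler Gauss lemma (Theorem \ref{g}), it belongs to $\mathcal{X}_0[0,r]$ with $r=G(v)^{1/2}$. Using the Jacobi equation and integration by parts, one computes $I_0^r(J,J)=0$, which contradicts the positive definiteness hypothesis on $I_0^r$. Hence $d(\exp_p)_v$ is nonsingular for every nonzero $v\in T_pM$, so $\exp_p$ is a local diffeomorphism on $T_pM\setminus\{0\}$ and, since the Finsler spray is smooth on the slit tangent bundle, an $E$-local diffeomorphism at the origin as well.

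Next, I would equip $T_pM$ with the pullback Finsler structure $\tilde G:=\exp_p^*G$ (smooth off the origin, continuous there) and show that it is forward complete. The key observation is that the radial rays $t\mapsto tv$ in $T_pM$ project to radial geodesics in $M$ defined for all $t\ge 0$ by the Finsler Hopf--Rinow theorem applied to the complete manifold $(M,G)$; combined with the local isometry property, this promotes radial completeness to full geodesic completeness of $(T_pM,\tilde G)$. The Finsler analog of the classical theorem that a local isometry between complete connected manifolds is a covering map then yields that $\exp_p\colon(T_pM,\tilde G)\to(M,G)$ is a covering map, smooth off the origin. Finally, because $T_pM\cong\mathbb R^{2n}$ is connected and $M$ is simply connected, this covering must be a homeomorphism, giving the desired global $E$-diffeomorphism.

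The main obstacle will be the covering-map step: one must verify carefully that arbitrary geodesics in $M$ lift uniquely through the local $E$-isometry $\exp_p$, despite the facts that Finsler geodesics are not reversible and that $\tilde G$ is merely continuous at the origin. The lift for non-radial geodesics is produced by standard ODE continuation for the Finsler spray together with the no-critical-points conclusion of step one, while the behavior at the origin is handled separately using the $E$-diffeomorphism property established at $0$. The remainder of the argument is parallel to the Riemannian proof and only requires bookkeeping of the horizontal/vertical splittings introduced in Section 2.
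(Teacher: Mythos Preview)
The paper does not prove this theorem: it is quoted from Abate--Patrizio \cite{abate} as a known result (the Cartan--Hadamard theorem in the real Finsler setting) and is invoked only as a tool in the proof of Theorem~\ref{T-6.1}. There is therefore no ``paper's own proof'' to compare your proposal against.

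That said, your outline is the standard three-step Cartan--Hadamard argument and is essentially what one finds in \cite{abate}. One small correction in step~(i): your justification that a Jacobi field $J$ with $J(0)=J(r)=0$ lies in $\mathcal{X}_0[0,r]$ should not invoke the Gauss lemma (Theorem~\ref{g}), which concerns tangent vectors to distance spheres rather than Jacobi fields along a fixed geodesic. The correct observation is that $\langle J^H|T^H\rangle_T$ is constant along $\gamma$ (see the Remark following Theorem~\ref{l} in this paper), so $J(0)=0$ already forces $\langle J^H|T^H\rangle_T\equiv 0$. With that adjustment, step~(i) goes through as you wrote it, and steps~(ii)--(iii) are the classical pullback-metric/covering-space argument, with the caveats about non-reversibility and behavior at the origin that you correctly flag.
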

\begin{theorem}\label{T-6.1}
Suppose that $(\mathbb{R}^n,G)$ is a real Minkowski space, and fix $p \in M$. Then $\exp_p$ is an $E$-diffeomorphism at the origin.
\end{theorem}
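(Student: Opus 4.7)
The plan is to derive this as a direct application of Theorem \ref{T-6} (the Finsler Cartan-Hadamard theorem) by checking its three hypotheses: completeness of $(\mathbb{R}^n,G)$, simple connectedness of $\mathbb{R}^n$, and positive-definiteness of the Morse index form $I_0^r$ on $\mathcal{X}_0[0,r]$ along every radial normal geodesic issuing from $p$. The first and second are essentially automatic; the substance of the argument lies in the third, and it will be reduced to the flatness of the canonical connection associated with a translation invariant Finsler structure.

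Since $G(x;u) = G(u)$ depends only on the vector variable on $\mathbb{R}^n$, one has $G_{;i} \equiv 0$ and hence, by the defining formulas in Section 2, all nonlinear connection coefficients $\Gamma^j_{;i}$ and all Cartan connection coefficients $\Gamma^j_{i;k}$ vanish identically. The covariant derivative $\nabla$ therefore reduces to componentwise directional differentiation, and the curvature tensor $\Omega$ vanishes identically, so the flag curvature is identically zero. Geodesics are straight lines traversed with constant $G$-speed, defined for all time, so $(\mathbb{R}^n,G)$ is forward and backward complete.

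With $\Omega \equiv 0$, the Morse index form of any radial normal geodesic $\gamma\colon [0,r]\to \mathbb{R}^n$ with $\gamma(0)=p$ reads
\begin{equation*}
I(\xi,\xi)=\int_0^r \|\nabla_{T^H}\xi^H\|_T^2\, dt \geq 0, \qquad \forall\, \xi \in \mathcal{X}_0[0,r].
\end{equation*}
If $I(\xi,\xi)=0$ then $\nabla_{T^H}\xi^H \equiv 0$ along $\gamma$, so $\xi$ is parallel; together with the endpoint conditions $\xi(0)=\xi(r)=0$ this forces $\xi\equiv 0$. Hence $I_0^r$ is positive definite on $\mathcal{X}_0[0,r]$ for every $r>0$. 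Since $\mathbb{R}^n$ is simply connected, Theorem \ref{T-6} applies and yields that $\exp_p\colon T_p\mathbb{R}^n \to \mathbb{R}^n$ is an $E$-diffeomorphism at the origin.

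The main (and only) subtle point is distinguishing \emph{positive-definiteness} of $I_0^r$ from mere non-negativity; I handle this above via the parallel-transport-with-vanishing-endpoints argument. Everything else reduces to the fact that a translation invariant Finsler structure has a flat canonical connection, so none of the curvature-driven obstacles that typically complicate Finsler Cartan-Hadamard arguments arise here.
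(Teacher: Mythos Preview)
Your proof is correct and follows essentially the same route as the paper: verify completeness, note that a Minkowski metric has vanishing (hence non-positive) flag curvature, observe that $\mathbb{R}^n$ is simply connected, and apply the Finsler Cartan--Hadamard theorem (Theorem~\ref{T-6}). The only cosmetic difference is that the paper invokes the parenthetical ``e.g., assume that the horizontal flag curvature is non-positive'' in Theorem~\ref{T-6} directly, whereas you unpack that hypothesis by showing $I_0^r$ is positive definite via $\Omega\equiv 0$ and the parallel-with-zero-endpoints argument.
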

\begin{proof}
By the Hopf-Rinow theorem for a real Finsler metric (see Theorem 1.6.9 in \cite{abate}), $(\mathbb{R}^n,G)$ is compete. Since the horizontal flag curvature of $(\mathbb{R}^n,G)$ vanishes identically. This together with Theorem \ref{T-6} and the fact that $\mathbb{R}^n$ is simply connected yields that $\exp_p$ is an $E$-diffeomorphism at the origin.
\end{proof}
By Corollary \ref{C-6.1} and Theorem \ref{T-6.1}, we have the following corollary.
\begin{corollary}
Suppose that $(\mathbb{C}^n, G)$ is a complex Minkowski space. Suppose that $(N,H)$ is a strongly pseudocnvex complex Finsler manifold such that its holomorphic sectional curvature is bounded from above by a negative constant $K_2$. Then any holomorphic mapping $f$ from $\mathbb{C}^n$ into $N$ is a constant.
\end{corollary}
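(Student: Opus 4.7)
The plan is to show that a complex Minkowski space $(\mathbb{C}^n, G)$ satisfies all the hypotheses of Corollary \ref{C-6.1} on the domain side, and then invoke that corollary directly. So the proof reduces to four verifications: strongly convex weakly K\"ahler-Finsler, existence of a pole, radial flag curvature bounded from below, and non-negative holomorphic sectional curvature.

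First, I would fix an arbitrary point $p \in \mathbb{C}^n$. Since the complex norm $\|\cdot\|$ is assumed to have strongly convex unit ball, $G$ is strongly convex in the sense of Definition \ref{D-4.1}. Because $G(p;v) = \|v\|^2$ depends only on $v$ and not on $p$, all derivatives of $G$ with respect to $z$ vanish identically, so $G_{\overline{\gamma};\alpha} \equiv 0$, $\Gamma^\beta_{;\alpha} \equiv 0$ and $\Gamma^\alpha_{\beta;\mu} \equiv 0$. In particular $[\Gamma^\alpha_{\nu;\mu} - \Gamma^\alpha_{\mu;\nu}]v^\mu \equiv 0$, so $G$ is K\"ahler-Finsler (hence weakly K\"ahler-Finsler) as already remarked in the text preceding Theorem \ref{T-6.1}.

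Next, I would use Theorem \ref{T-6.1} applied to the underlying real Minkowski space $(\mathbb{R}^{2n}, G^\circ)$: the exponential map $\exp_p$ is an $E$-diffeomorphism at the origin, so $p$ is a pole in the sense of the definition in Section 3. For the curvature conditions, all components of the curvature tensor of the Chern-Finsler connection are built from $z$-derivatives or vertical derivatives of the $\Gamma^\alpha_{\beta;\mu}$ and $\Gamma^\alpha_{\beta\gamma}$; since the $z$-derivatives of $G$ vanish, a direct inspection of the four components $R^\alpha_{\beta;\mu\overline{\nu}}$, $R^\alpha_{\beta\delta;\overline{\nu}}$, $R^\alpha_{\beta\overline{\gamma};\mu}$, $R^\alpha_{\beta\delta\overline{\gamma}}$ listed in Section 2 shows that they all vanish. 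Therefore $K_G(v) \equiv 0$, so the holomorphic sectional curvature is non-negative. Likewise, as noted in the paragraph preceding Theorem \ref{T-6.1}, the horizontal (real) flag curvature vanishes identically, so the radial flag curvature is trivially bounded from below by $0$.

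Having verified every hypothesis, I would then apply Corollary \ref{C-6.1} to any holomorphic mapping $f: \mathbb{C}^n \to N$ with $K_H \le K_2 < 0$, concluding that $f$ must be constant. No obstacle of substance arises: the one point requiring a brief justification is the vanishing of the Chern-Finsler curvature from the vanishing of the $z$-derivatives of $G$, which is a direct consequence of the explicit formulas recorded in Section 2.
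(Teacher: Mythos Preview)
Your proposal is correct and follows exactly the paper's approach: the paper's proof consists of the single line ``By Corollary~\ref{C-6.1} and Theorem~\ref{T-6.1}'', and you have simply spelled out the verifications of the hypotheses of Corollary~\ref{C-6.1} in detail. Two minor inaccuracies worth flagging, neither of which affects validity: Definition~\ref{D-4.1} is the definition of \emph{strongly pseudoconvex}, not strongly convex (the latter is the unnumbered definition immediately following it), and the purely vertical component $R^{\alpha}_{\beta\delta\overline{\gamma}} = -\dot{\partial}_{\overline{\gamma}}(\Gamma^{\alpha}_{\beta\delta})$ need not vanish on a non-Hermitian Minkowski space---but only $R^{\alpha}_{\beta;\mu\overline{\nu}}$ enters $K_G(v)$, and that component does vanish since $\Gamma^{\alpha}_{\beta;\mu}\equiv 0$ and $\Gamma^{\sigma}_{;\mu}\equiv 0$.
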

\begin{remark}
If $H$ comes from a Hermitian metric on $N$, then the above corollary is obviously true.
\end{remark}

 Now we consider that $(N,H)$ is a Hermitian manifold. By Theorem \ref{T-6.1b}, we obtain the following corollary.
\begin{corollary}\label{C-6.2}
Suppose that $(M,G)$ is a complete strongly convex weakly K\"ahler-Finsler manifold such that its radial flag curvature is bounded from below and its holomorphic sectional curvature $K_G$ is bounded from below by a constant $K_1 \leq 0$. Suppose that $(N,ds^2_N)$ is a Hermitian manifold such that its holomorphic sectional curvature $K_H$ is bounded from above by a constant $K_2<0$. Then for any holomorphic mapping $f$ from $M$ into $N$,
$$f^*ds^2_N\leq \frac{K_1}{K_2}G(z;dz).$$
In particular, $K_1\geq 0$, then any holomorphic mapping $f$ from $M$ into $N$ is a constant.
\end{corollary}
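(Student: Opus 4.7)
The plan is to derive this corollary as a direct specialization of Theorem \ref{T-6.1b}. The only thing that requires attention is fitting the Hermitian target $(N, ds_N^2)$ into the complex Finsler framework used there. I would first observe, exactly as in the remark following Definition \ref{D-4.1}, that any $C^\infty$ Hermitian metric $ds_N^2 = g_{\alpha\bar\beta}(w)\,dw^\alpha d\bar w^\beta$ is automatically a strongly pseudoconvex complex Finsler metric: set $H(w;\eta) := g_{\alpha\bar\beta}(w)\eta^\alpha\bar\eta^\beta$, and then $H$ is smooth on the whole $T^{1,0}N$, satisfies the $2$-homogeneity condition in $(\eta,\bar\eta)$, and has Levi matrix $(g_{\alpha\bar\beta})$, which is positive definite. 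Thus $(N,H)$ is a legitimate target in the sense of Definition \ref{D-4.1}.

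Next I would verify that, for this quadratic $H$, the holomorphic sectional curvature $K_H$ built from the Chern--Finsler connection coincides with the classical Hermitian holomorphic sectional curvature of $ds_N^2$. Because $H$ is quadratic in $v$, one has $\Gamma^\alpha_{\beta\gamma}\equiv 0$, so the Chern--Finsler connection reduces to the pull-back of the ordinary Chern connection of $ds_N^2$; the curvature component $R^\alpha_{\beta;\mu\bar\nu}$ depends only on the base point, and Lemma \ref{L-2.1} identifies $K_H(v)$ with the Hermitian holomorphic sectional curvature along $v$. Hence the hypothesis $K_H \leq K_2 < 0$ in the Hermitian sense is exactly the Finsler hypothesis needed in Theorem \ref{T-6.1b}, and the pull-backs $f^* H$ and $f^* ds_N^2$ agree as quadratic forms on $T^{1,0}M$.

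Applying Theorem \ref{T-6.1b} to $f:M\to N$ with target $(N,H)$ then gives
\begin{equation*}
f^* ds_N^2 \;=\; (f^* H)(z;dz) \;\leq\; \frac{K_1}{K_2}\, G(z;dz),
\end{equation*}
which is the first assertion. For the in-particular clause, the additional hypothesis $K_1 \geq 0$ combined with the standing assumption $K_1 \leq 0$ forces $K_1 = 0$, so the right-hand side vanishes and $f^* ds_N^2 \equiv 0$. Since $ds_N^2$ is positive definite on $T^{1,0}N$, this forces $df_z(v) = 0$ for every $z\in M$ and every $v \in T_z^{1,0}M$, i.e., the holomorphic differential of $f$ is identically zero on the connected manifold $M$, and therefore $f$ is a constant map.

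There is essentially no obstacle here; all the work has been carried out in Theorem \ref{T-6.1b}. The only care needed is the bookkeeping check that the two notions of ``holomorphic sectional curvature'' agree when the target metric is Hermitian, which I would state explicitly so that the reader sees no hidden hypothesis has been imposed when translating between the Hermitian and complex Finsler formulations.
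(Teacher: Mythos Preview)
Your proposal is correct and follows exactly the paper's approach: the paper simply states that this corollary follows from Theorem \ref{T-6.1b} once one observes that a Hermitian metric is a strongly pseudoconvex complex Finsler metric. Your additional bookkeeping (checking that the Chern--Finsler holomorphic sectional curvature agrees with the classical Hermitian one, and spelling out the $K_1=0$ rigidity argument) is more explicit than what the paper writes, but the route is the same.
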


\begin{remark}
In \cite{grauert}, we know that every compact Riemann surface $X$ of genus $\geq 2$ carries a Hermitian metric $ds_X^2$ with holomorphic sectional curvature $K \leq -1$. Therefore, there exist many Hermitian manifolds which satisfy the conditions of Corollary \ref{C-6.2}.
\end{remark}
The following corollary is an application of Corollary \ref{C-6.2}.
\begin{corollary}
Suppose that $(\mathbb{C}^n, G)$ is a complex Minkowski space. Suppose that $(N,H)$ is a Hermitian manifold such that its holomorphic sectional curvature is bounded from above by a negative constant $K_2$. Then any holomorphic mapping $f$ from $\mathbb{C}^n$ into $N$ is a constant.
\end{corollary}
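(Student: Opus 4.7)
The plan is to verify that a complex Minkowski space $(\mathbb{C}^n,G)$ satisfies every hypothesis of Corollary \ref{C-6.2} with the lower curvature bound $K_1=0$, and then to let the Schwarz estimate force $f^\ast ds_N^2\equiv 0$. This reduces the problem to four essentially bookkeeping checks on $G$: that $G$ is a strongly convex weakly K\"ahler-Finsler metric, that it is complete, that its radial flag curvature is bounded from below, and that its holomorphic sectional curvature is bounded from below by $0$.

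First I would note that on a complex Minkowski space the coefficients $\Gamma^{\alpha}_{\beta;\mu}$ vanish identically, since $G$ does not depend on the base point $z$. In particular $[\Gamma^{\alpha}_{\nu;\mu}-\Gamma^{\alpha}_{\mu;\nu}]v^\mu=0$, so $G$ is a (strongly convex) K\"ahler-Finsler metric and hence a strongly convex weakly K\"ahler-Finsler metric. Viewing $G$ as the real Finsler metric $G^\circ$ on $\mathbb{R}^{2n}$, the Hopf-Rinow theorem for real Finsler metrics gives completeness of $(\mathbb{C}^n,G)$. Moreover, because $(\mathbb{C}^n,G)$ is simultaneously a real Minkowski space, its flag curvature vanishes identically (as recalled in the text preceding Theorem \ref{T-6.1}), so in particular the radial flag curvature is bounded from below. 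Finally, since the curvature $\Omega$ of the Chern-Finsler connection is built from $z$-derivatives of $\Gamma^\alpha_{\beta;\mu}$ and $\Gamma^\alpha_{\beta\delta}$, and the former vanish while the latter depend only on $v$, a direct inspection gives that the holomorphic sectional curvature $K_G$ of a complex Minkowski metric vanishes identically; in particular it is bounded below by $K_1=0$.

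With these four verifications in hand, I would then apply Corollary \ref{C-6.2} to the holomorphic mapping $f:(\mathbb{C}^n,G)\to(N,ds_N^2)$ with $K_1=0$ and with the given $K_2<0$, obtaining
\begin{equation*}
f^\ast ds_N^2 \;\leq\; \frac{K_1}{K_2}\,G(z;dz) \;=\; 0.
\end{equation*}
Since $ds_N^2$ is a Hermitian metric, $f^\ast ds_N^2$ is a nonnegative $(1,1)$-form, so the inequality forces $f^\ast ds_N^2\equiv 0$ on $\mathbb{C}^n$. Equivalently $df\equiv 0$, hence $f$ is a constant map.

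There is essentially no analytic obstacle here; the only nontrivial content has already been done in Theorem \ref{T-6.1b} and Corollary \ref{C-6.2}. The mildly delicate step is the fourth verification, i.e.\ confirming that a complex Minkowski metric really has identically vanishing holomorphic sectional curvature (and not merely $\leq 0$); this relies on the fact that every Christoffel-type quantity $\Gamma^{\alpha}_{\beta;\mu}$ with a semicolon derivative vanishes on a Minkowski metric, so every term in the expression for $\Omega$ that could contribute to $\langle\Omega(\chi,\overline\chi)\chi,\chi\rangle_v$ is zero. Once this is recorded, the corollary follows immediately by specialization.
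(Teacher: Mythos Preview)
Your proposal is correct and follows essentially the same route as the paper: the paper states this corollary simply as ``an application of Corollary~\ref{C-6.2}'' after having recorded (in the paragraphs preceding Theorem~\ref{T-6.1}) that a complex Minkowski space is K\"ahler-Finsler with identically vanishing flag and holomorphic sectional curvatures, and complete by Hopf--Rinow. Your write-up just makes these verifications explicit and then invokes Corollary~\ref{C-6.2} with $K_1=0$, exactly as intended.
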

\bigskip
{\bf Acknowledgement:}\ {\small This work is supported by the National Natural Science Foundation of China (No. 12071386, No. 11671330, No. 11271304, No. 11971401)}.

\bigskip


\begin{thebibliography}{99}
\bibitem{abate} Abate M, Patrizio G. Finsler Metrics-A Global Approach. LNM, vol. 1591. Springer, Berlin (1994).
\bibitem{ahlfors} Alhfors L V. An extension of Schwarz's Lemma. Trans. Amer. Math. Soc. \textbf{43}, 359-364 (1938).
\bibitem{bao} Bao D, Chern S S, Shen Z. An Introduction to Riemann-Finsler Geometry. GTM 200, (2000).

\bibitem{BK}Burns D M, Krantz S G. Rigidity of holomorphic mappings and a new Schwarz lemma at the boundary. J. Amer. Math. Soc. \textbf{7}(3), 661-676 (1994).

\bibitem{C}  Carath$\acute{\mbox{e}}$odory C. \"Uber das Schwarzsche Lemma be analytischen Funktionen von zwei komplexen Ver\"anderlichen (German). Math. Ann. \textbf{97} (1), 76-98 (1927).

\bibitem{chenb} Chen B, Shen Y B. K\"ahler Finsler metrics are actually strongly K\"ahler. Chin. Ann. Math. \textbf{30B}(2),173-178 (2009).
\bibitem{chen} Chen Z H,  Cheng S Y, Lu Q K.  On the Schwarz lemma for complete K\"ahler manifolds. Sci. Sinica \textbf{22}(11), 1238-1247 (1979).
\bibitem{cheny} Chen X X,  Yan R M. Wu's theorem for K\"ahler-Finsler spaces. Adv. Math. \textbf{275}, 184-194 (2015).

\bibitem{Ch}  Chern S S. Finsler geometry is just Riemannian geoemtry without the quadratic Restriction. Not. AMS \textbf{43}(9), 959-963 (1996).
\bibitem{shen1} Chern S S,  Shen Z. Riemann-Finsler Geometry. World Scientific. Singapore (2005).

\bibitem{Zhou}Cui N,  Guo J,  Zhou L. An uniformization theorem in complex Finsler geometry, arXiv: 2102.13484.

\bibitem{grauert}Grauent H, Reckziegel H. Hermitesche Metriken und normale Familien holomorpher Abbildungen. Math. Z., \textbf{89}, 331-368 (1965).
\bibitem{greene} Greene R E,  Wu H, Function Theory on Manifolds which Possess a Pole. Lecture Notes in Mathematics. vol. 699, Springer, Berlin, Heidelberg (1979).
\bibitem{KL}Kim K T,  Lee H. Schwarz's lemma  from a differential geometric viewpoint. IISc Lecture Notes Series-Vol. 2, World Scientific Publishing Co. Pte. Ltd., (2011).
\bibitem{Kobayashi3} Kobayashi S. Intrinsic distance, measures and geometric function theory. Bull. Amer. Math. Soc. \textbf{82}(3) , 357-416 (1967).

\bibitem{kobayashi} Kobayashi S. Negative vector bundles and complex Finsler structures. Nagoya Math. J. \textbf{57}, 153-166 (1975).
\bibitem{kobayashi1} Kobayashi S. Hyperbolic Complex Spaces. Springer, Berlin (1998).
\bibitem{kobayashi2} Kobayashi S. Hyberbolic manifolds and holomorphic mappings. An introduction. Second edition, World Scientific publishiing, (2005).
\bibitem{lempert}  Lempert L. \textit{La m\'etrique de Kobayashi et la repr\'esentation des domaines sur la boule} (France). Bull. Soc. Math. \textbf{109}, 427-474 (1981).

\bibitem{li} Li J L, Qiu C H. Comparison and Wu's theorems in Finsler geometry. Math. Z. \textbf{295}, 485-514 (2020).
\bibitem{Liu}Liu K. Geometric height inequalities. Math Res Lett. 1996, 3(5): 693-702

\bibitem{Liua}Liu T S, Wang J F, Tang X M. Schwarz lemma at the boundary of the unit ball in $\mathbb{C}^n$ and its applications. J. Geom. Anal. \textbf{25}(3), 1890-1914 (2015).
\bibitem{Liub}Liu T S, Tang X M. Schwarz lemma at the boundary of strongly pseudoconvex domain in $\mathbb{C}^n$. Math. Ann. \textbf{366}(1-2), 655-666 (2016).
\bibitem{Liuc}Liu T S, Tang X M. A boundary Schwarz lemma on the classical domain of type $I$. Sci. China Math. \textbf{60}(7), 1239-1258 (2017).
\bibitem{Liud}Liu T S, Tang X M. Schwarz lemma and rigidity theorem at the boundary for holomorphic mappings on the unit polydisk in $\mathbb{C}^n$. J. Math. Anal. Appl. \textbf{489}(2), 124148, 8 pp (2020).

\bibitem{LS}Lu Z, Sun X. Weil-Petersson geometry on moduli  space of polarized Calabi-Yau manifolds. J Inst Math Jussieu. 2004, 3(2):185-229
\bibitem{Nia} Ni L. General Schwarz lemma and their applications. Internat J Math. 2019, 30(13), 1940007, 17pp
\bibitem{Nib} Ni L. Liouville Theorems and a Schwarz Lemma for Holomorphic Mappings Between K\"ahler Manifolds. Communications on Pure and Applied Mathematics. 2021, Vol. LXXIV, 1100-1126

\bibitem{La} Look, K H. Schwarz lemma in the theory of functions of several complex variables. (Chinese) Acta Math Sinica. 1957, 7:370-420
\bibitem{Lb} Look, K H. Schwarz lemma and analytic invariants. Sci Sinica. 1958,7: 453-504

\bibitem{nie} Nie J, Zhong C. Schwarz lemma from complete K\"ahler manifolds into complex Finsler manifold, arXiv:2105.08720 (to appear in Science China-Mathematics).
\bibitem{pick}  Pick G. \textit{Uber eine Eigenschaft der konformen Abbildung kreisformiger Bereiche} (German). Math. Ann. \textbf{77}(2), 1-6 (1916).
\bibitem{royden} Royden H L.  The Ahlfors-Schwarz lemma in several complex variables. Commentarii Mathematici Helvetici, \textbf{55}(1): 547-558 (1980).
\bibitem{shen} Shen B, Shen Y B.  Schwarz lemma and Hartogs phenomenon in complex Finsler manifold. Chin. Ann. Math. \textbf(34B)(3), 455-460 (2013).
\bibitem{shen2} Shen Z. Lectures on Finsler Geometry. World Scientific. Singapore (2001).

\bibitem{Tang} Tang X M, Liu T S, Lu J. Schwarz lemma at the boundary of the unit polydisk in $\mathbb{C}^n$. Sci. China Math. \textbf{58}(8), 1639-1652 (2015).
\bibitem{Tangb}Tang X M, Liu T S, Zhang W J. Schwarz lemma at the boundary on the classical domain of type $II$. J. Geom. Anal. \textbf{28}(2), 1610-1634(2018).
\bibitem{tosatti} Tosatti V. A general Schwarz Lemma for almost Hermitian manifolds. Comm. Anal. Geom. \textbf{15}(5), 1063-1086 (2007).

\bibitem{Wang}Wang J F, Liu T S, Tang X M. Schwarz lemma at the boundary on the classical domain of type $IV$. Pacific J. Math. \textbf{302}(1), 309-333 (2019).

\bibitem{wy} Wu D, Yau S T. Complete K\"ahler-Einstein metrics under certain holomorphic covering and examples. Ann Inst Fourier (Grenoble). 2018, 68(7): 2901-2921

\bibitem{wan}Wan X. Holomorphic sectional curvature of complex Finsler manifolds. J. Geom. Anal. \textbf{29} (2019), no.1,194-216.
\bibitem{wong}Wong P M, Wu B Y.  On the holomorphic sectional curvature of complex Finsler manifolds. Hous. J. Math. \textbf{37}(2), 415-433 (2011).
\bibitem{wub}Wu B Y, Xin Y L. Comparison theorems in Finsler geometry and their application. Math. Ann. \textbf{337}, 177-196 (2007).
\bibitem{XZ}Xia H, Zhong C.  On strongly cconvex weakly K\"ahler-Finsler metrics of constant flag curvature. J. Math. Anal. Appl. \textbf{443}(2),  891-912 (2016).
\bibitem{yau2} Yau S T. A general Schwarz lemma for K\"ahler manifolds. Amer. J. Math. \textbf{100}(1), 197-203 (1978).
\bibitem{chenya} Yang H C,  Chen Z H. On the Schwarz lemma for complete Hermitian manifolds. Several complex variables (Hangzhou, 1981), Birkh\"auser Boston, Boston, MA, 99-116 (1984).
\bibitem{yang} Yang X, Zheng F. On real bisectional curvature for Hermitian manifolds. Trans Amer Math Soc. 2019, 371: 2703-2718

\bibitem{YZ}Yin S, Zhang X. Comparison theorems and their applications on K\"ahler-Finsler manifolds. J. Geom. Anal. \textbf{30}(2), 2105-2131 (2020).
\bibitem{Zh1} Zhong C. On unitary invariant strongly pseudoconvex complex Finsler metrics. Differ. Geom. Appl., {\bf 40}, 159-186 (2015).

\bibitem{zuo} Zuo K. Yau's form of Schwarz lemma and Arakelov inequality on moduli spaces of projective manifolds. Handbook of geometric analysis. Adv Lect Math. Int Press, Somerville, MA, 2008, 7(1): 659-676




\end{thebibliography}
\end{document}